\DeclarePairedDelimiter{\scal}{\langle}{\rangle}
\DeclarePairedDelimiter{\abs}{\lvert}{\rvert}
\newcommand*{\bigcdot}{}
\DeclareRobustCommand*{\bigcdot}{%
  \mathord{\hspace{0.1em}\mathpalette\bigcdot@{}\hspace{0.1em}}%
}
\newcommand*{\bigcdot@scalefactor}{.5}
\newcommand*{\bigcdot@widthfactor}{1.15}
\newcommand*{\bigcdot@}[2]{%
  \sbox0{$#1\vcenter{}$}
  \sbox2{$#1\cdot\m@th$}%
  \hbox to \bigcdot@widthfactor\wd2{%
    \hfil
    \raise\ht0\hbox{%
      \scalebox{\bigcdot@scalefactor}{%
        \lower\ht0\hbox{$#1\bullet\m@th$}%
      }%
    }%
    \hfil
  }%
}
\colorlet{darkblue}{blue!90!black}
\colorlet{darkgreen}{green!50!black}
\let\restr\upharpoonright
\newtheorem{theorem}{Theorem} [section]
\newtheorem{lemma}[theorem]{Lemma}
\newtheorem{proposition}[theorem]{Proposition}
\newtheorem{corollary}[theorem]{Corollary}
\theoremstyle{definition}
\newtheorem{remark}[theorem]{Remark}
\newtheorem{assumption}[theorem]{Assumption}
\DeclareMathOperator*{\supp}{supp}
\DeclareMathOperator{\Law}{Law}
\DeclareMathOperator*{\hsum}{\hat\sum}
\newcommand{\Z}{\mathbb{Z}}
\newcommand{\R}{\mathbb{R}}
\newcommand{\T}{\mathbb{T}}
\newcommand{\f}{\mathbf f}
\let \div \relax
\DeclareMathOperator{\div}{div}
\def\${\vert\!\vert\!\vert}
\newcommand{\bb}{\mathbb}
\let\P= \undefined
\newcommand{\P}{\mathbb{P}}
\newcommand{\E}{\mathbb{E}}
\def\one{\mathbf{1}}
\newcommand{\B}{\mathcal{B}}
\newcommand{\F}{\mathcal{F}}
\newcommand{\al}{\alpha}
\newcommand{\be}{\beta}
\newcommand{\dl}{\delta}
\newcommand{\ep}{\varepsilon}
\newcommand{\g}{\gamma}
\newcommand{\s}{\sigma}
\newcommand{\ft}{\widehat}
\newcommand{\wt}{\widetilde}
\newcommand{\ta}{\theta}
\newcommand{\les}{\lesssim}
\newcommand{\ind}{\mathbbm 1}
\renewcommand{\S}{\mathcal{S}}
\let\d\partial
\let\f\frac
\newcommand{\N}{\mathbb{N}}
\newcommand{\CF}{\mathcal{F}}
\newcommand{\CK}{\mathcal{K}}
\newcommand{\MM}{\mathcal{M}}
\newcommand{\CC}{\mathcal{C}}
\newcommand{\CP}{\mathcal{P}}
\newcommand{\eps}{\ep}
\renewcommand{\ft}{\widehat}
\numberwithin{equation}{section}
\numberwithin{theorem}{section}
\newcommand\RSloop{\@ifnextchar\bgroup\RSloopa\RSloopb}
\newcommand\RSloopa[1]{\bgroup\RSloop#1\relax\egroup\RSloop}
\newcommand\RSloopb[1]%
\newcommand\Xy{0}
\newcommand\RS[1]%
\newcommand\RSdef[1]{\expandafter\def\csname RS:#1\endcsname}
\newlength\RSu
\def\OU{\RS{i}}
\def\OUw{\RS{w}}
\newcommand{\ogeneric}[2][0.7]{%
  \vphantom{\oplus}\mathpalette\o@generic{{#1}{#2}}%
}
\newcommand{\o@generic}[2]{\o@@generic#1#2}
\newcommand{\o@@generic}[3]{%
  \begingroup
  \sbox\z@{$\m@th#1\oplus$}%
  \dimen@=\dimexpr\ht\z@+\dp\z@\relax
  \savebox\tw@[\totalheight]{$\m@th#1\bigcirc$}%
  \makebox[\wd\z@]{%
    \ooalign{%
      $#1\vcenter{\hbox{\resizebox{\dimen@}{!}{\usebox\tw@}}}$\cr
      \hidewidth
      $#1\vcenter{\hbox{\resizebox{#2\dimen@}{!}{$#1\vphantom{\oplus}{#3}$}}}$%
      \hidewidth
      \cr
    }%
  }%
  \endgroup
}
\def\CM{\mathrm{CM}}
\def\eqdef{\coloneq}
\newcommand{\ogeq}{\mathrel{\rlap{\kern0.18em\tikz[x=0.1em,y=0.1em,baseline=0.04em] \draw[line width=0.25pt] (0,0) -- (23:3.7);}\ogreaterthan}}
\DeclareMathOperator{\id}{id}
\begin{document}
\baselineskip = 14pt

\title
{Quasi-Gaussianity of the 2D stochastic\\ Navier--Stokes equations}

\author
{James Coe$^1$, Martin Hairer$^2$\orcidlink{0000-0002-2141-6561}, Leonardo Tolomeo$^1$\orcidlink{0000-0002-1882-2294}}

\institute{The University of Edinburgh, UK\\ \email{j.coe@ed.ac.uk, l.tolomeo@ed.ac.uk}
\and
EPFL, Switzerland and Imperial, UK\\ \email{martin.hairer@epfl.ch, m.hairer@imperial.ac.uk}}
%
%
%
%
%
%
%

\maketitle

\begin{abstract}
We study the qualitative properties of solutions to the 2D stochastic 
Navier--Stokes equations with forcing that is white in time and 
coloured in space. Our main result shows that the unique invariant measure 
of this system is equivalent to that of the corresponding Ornstein--Uhlenbeck process. 

Our method relies on a generalization of the ``time-shifted Girsanov method'' 
of \cite{Mattingly2005-aj,MRS22} to compare the laws of time marginals for dissipative SPDEs. 
This generalisation allows to not only compare solutions to a nonlinear equation to those
of the corresponding linear equation, but also to directly compare two nonlinear equations.
We use this to establish equivalence of the Navier--Stokes system to a ``twisted'' nonlinear system that leaves the Gaussian measure invariant. We further apply this method to establish similar equivalence statements for a family of hypoviscous Navier--Stokes equations.
\end{abstract}

\thispagestyle{empty}

\tableofcontents

\section{Introduction}

Consider the stochastic Navier--Stokes equations on the torus $\T^2$ driven by additive noise
\begin{equ}[e:SNS]
        \partial_t u+(u\cdot\nabla)u=\Delta u-\nabla p+ \sqrt2\boldsymbol{\xi}_\alpha, \qquad
        \nabla\cdot u=0,
\end{equ}
for velocity $u:\R\times\T^2\rightarrow\R^2$ and noise 
$\boldsymbol{\xi}_\alpha=|\nabla|^{-\alpha}\boldsymbol{\xi}_0$, 
where $\alpha>0$ and $\boldsymbol{\xi}_0$ is a divergence-free vector-valued spacetime white noise. 
We will assume throughout that both the noise $\boldsymbol{\xi}_0$ and the initial condition $u_0$ are
of vanishing mean, so that the solution to \eqref{e:SNS} has vanishing mean for all times.
In particular, expressions like $|\nabla|^{-\alpha}u_t$ are well defined and the heat semigroup
converges to $0$ at an exponential rate.

The model \eqref{e:SNS} (possibly with additional Ekman damping which has no incidence on the problems
discussed in this article) is a popular model for 2D turbulence and has been widely studied in the 
mathematical literature. Of particular interest is its long-time behaviour, including the study of its
(usually unique) invariant measure $\rho$, see \cite{ErgodNS,DPD,Ee-Mattingly-Sinai,MR1868991,MR1785459,MR2259251,MR2478676} 
for some milestones in this direction. 
While conditions for the existence and uniqueness of such invariant measures are by now very well understood
(but see \cite{MR4244269} for an open problem in this direction), it is a hard problem
to understand their qualitative features beyond basic regularity and integrability properties. This
is due in large part to the fact that the Markov process generated by \eqref{e:SNS} is not reversible.

One natural question one may ask is whether the invariant measure of \eqref{e:SNS} ``looks'' 
approximately Gaussian, in other words whether it is equivalent to the Gaussian measure $\mu_\alpha$ that is 
invariant for the linear equation 
\begin{equ}[e:SNSlin]
        \partial_t u=\Delta u+\sqrt2\boldsymbol{\xi}_\alpha\;.
\end{equ}
In the special case $\alpha = 0$ (with the driving noise being \textit{exactly} space-time white noise
and not merely a Gaussian noise with a covariance structure yielding similar qualitative properties),
the answer is not only affirmative, but the two invariant measures actually happen to
coincide \cite{DPD}. This is a very special situation and there is no reason to believe that these
measures are the same in general, but the main idea of this article is to manufacture a similar
situation when $\alpha \neq 0$ and to show that the solutions to the stochastic Navier--Stokes equation
are close enough to that manufactured situation.

This general question was first studied in \cite{Mattingly2005-aj} where the authors showed that 
the invariant measures of \eqref{e:SNS} and \eqref{e:SNSlin} are equivalent, provided that the
Laplacian $\Delta$ is replaced by a ``hyperviscous'' term of the form
$-(-\Delta)^{1+\delta}$ for some $\delta > 0$ (and provided that $\alpha$ is large enough). 
The ``time-shifted Girsanov method'' introduced in that work was further extended in 
\cite{Suidan2,Girsanov2,MRS22} to more general settings, covering in particular also much lower regularity regimes.
A very recent work \cite{Singular} shows that this method
is in general sharp, namely there exists a large class of stochastic PDEs such that their
invariant measures are equivalent to the corresponding Gaussian measure if and only if the natural analogue
of the condition given in \cite{Mattingly2005-aj} holds. This would in principle suggest that $\rho$ and
$\mu_\alpha$ are mutually singular in the case of \eqref{e:SNS}. The argument given in
\cite{Singular} however relies on exhibiting an explicit quantity which is shown to converge under the Gaussian
measure and to diverge under the invariant measure of the nonlinear equation.
In the specific case of the Navier--Stokes nonlinearity, it turns out that additional cancellations
cause this quantity to vanish, so that the question remained open.

The main result of this article shows that the structure of the Navier--Stokes nonlinearity is such that
equivalence to the Gaussian measure holds beyond the natural threshold identified in \cite{Mattingly2005-aj}.
More precisely, we have the following result.

\begin{theorem} \label{theo:main}
For every $\alpha > 0$, \eqref{e:SNS} admits a unique invariant measure $\rho$, which is equivalent to $\mu_\alpha$.
\end{theorem}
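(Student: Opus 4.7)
The plan is to construct an auxiliary ``twisted'' nonlinear SPDE whose invariant measure is \emph{exactly} $\mu_\alpha$, and then compare its time marginals to those of \eqref{e:SNS} via the generalisation of the time-shifted Girsanov method of \cite{Mattingly2005-aj,MRS22} announced in the abstract, which directly couples two nonlinear equations rather than comparing a nonlinear equation with a linear one.

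\textbf{Step 1: the twisted equation.} The Ornstein--Uhlenbeck dynamics $\partial_t v = \Delta v + \sqrt 2\boldsymbol{\xi}_\alpha$ is reversible with respect to $\mu_\alpha$, while the 2D Euler flow formally preserves the enstrophy Gaussian $\mu_0$ (Gaussian with covariance $(-\Delta)^{-1}$ on divergence-free fields), and the rescaling $w = |\nabla|^\alpha v$ pushes $\mu_\alpha$ forward to $\mu_0$. This suggests introducing the ``Euler-conjugate'' drift
$$\tilde B(v) := -|\nabla|^{-\alpha}\,P\bigl[(|\nabla|^\alpha v\cdot\nabla)|\nabla|^\alpha v\bigr],$$
where $P$ is the Leray projector, and the twisted equation
$$\partial_t v = \Delta v + \tilde B(v) + \sqrt 2\boldsymbol{\xi}_\alpha,$$
for which $\mu_\alpha$ is then invariant by construction, since the OU part preserves $\mu_\alpha$ and $\tilde B$ is conjugate to the Euler drift that formally preserves $\mu_0$.

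\textbf{Step 2: coupled time-shifted Girsanov.} Let $B(u) = -P[(u\cdot\nabla)u]$ denote the Navier--Stokes drift and $D(u) := B(u) - \tilde B(u)$ the drift difference. The time-shifted method realises on a single probability space a solution $u$ of \eqref{e:SNS} and a solution $v$ of the twisted equation driven by $\boldsymbol{\xi}_\alpha + |\nabla|^\alpha \dot h_t$, where the adapted shift $h$ is chosen (as a Picard-type fixed point involving the heat semigroup acting on $D(u)$ together with a linear feedback on $u-v$) so that $u-v$ is damped exponentially and vanishes by a random but a.s.\ finite time $\tau$. Girsanov's theorem then produces a Radon--Nikodym density between the laws of $u$ and $v$ on $[0,T]$ for any $T>\tau$, subject to an a.s.\ finite Novikov-type integral $\int_0^T \||\nabla|^\alpha \dot h_s\|_{L^2}^2\,ds$.

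\textbf{Step 3: transfer to invariant measures.} Running the coupling from a stationary Navier--Stokes initial condition (whose time-$T$ marginal is $\rho$), Step~2 produces a coupled $v_T$ with law $\mu_\alpha$, by stationarity of the twisted system, satisfying $\Law(u_T)\sim\Law(v_T)$; this gives $\rho\sim\mu_\alpha$. The existence and uniqueness of $\rho$ are inherited from the classical results recalled in the introduction.

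\textbf{Main obstacle.} The delicate point is verifying the Novikov-type bound for $|\nabla|^\alpha D(u)$ along a stationary trajectory, where $u$ has only the regularity afforded by $\mu_\alpha$. Each of $|\nabla|^\alpha B(u)$ and $|\nabla|^\alpha \tilde B(u)$ is individually borderline singular, and a naive triangle-inequality estimate would fail precisely below the threshold of \cite{Mattingly2005-aj}. The gain has to come from the fact that, modulo tame lower-order terms, $|\nabla|^\alpha D(u)$ takes the form of a commutator $[|\nabla|^\alpha,\,u\cdot\nabla]|\nabla|^\alpha u$; this is the same structural cancellation that causes the obstruction of \cite{Singular} to vanish for the Navier--Stokes nonlinearity, and it is what should allow the Girsanov estimate to close for every $\alpha>0$.
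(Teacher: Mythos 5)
Your high-level strategy --- introduce a twisted nonlinear equation preserving $\mu_\alpha$ and compare to \eqref{e:SNS} via a nonlinear-to-nonlinear time-shifted Girsanov argument --- is the right one and matches the paper's. However, your specific choice of twisted drift is different from the paper's, and this difference is fatal.

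You take $\tilde B(v) = -|\nabla|^{-\alpha}P\bigl[(|\nabla|^\alpha v\cdot\nabla)|\nabla|^\alpha v\bigr]$, so that $w=|\nabla|^\alpha v$ solves stochastic Navier--Stokes with space-time white noise. That is exactly the setting of \cite{DPD}, where the nonlinearity is a genuine distribution and the equation \emph{must} be renormalised; so already showing invariance of $\mu_\alpha$ for your twisted system requires the full DPD machinery, which the paper explicitly avoids. More importantly, the drift difference
\begin{equation*}
D(u) = |\nabla|^{-\alpha}P\div\bigl(|\nabla|^\alpha u\otimes|\nabla|^\alpha u\bigr) - P\div\bigl(u\otimes u\bigr)
\end{equation*}
does \emph{not} gain a derivative. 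Its bilinear symbol is $|n|^{-\alpha}-|k|^{-\alpha}|\ell|^{-\alpha}$ acting on $\div(|\nabla|^\alpha u\otimes|\nabla|^\alpha u)$; in the low-high regime $|k|\ll|\ell|\approx|n|$ this is $\approx|\ell|^{-\alpha}(1-|k|^{-\alpha})$, which is of the same size as either term alone --- there is no cancellation. For $u\in\CC^{\alpha-\kappa}$ this leaves $D(u)\in\CC^{\alpha-1-\kappa}$ at best, whereas the argument of Lemma~\ref{lem:randomODE} (with $\gamma=1$) needs the drift difference to land in $\CC^{\alpha-1+\kappa}$: with your $D$, the exponent constraint becomes $\theta>\tfrac12+\kappa/2$, incompatible with the $L^2$-in-time requirement $\theta<\tfrac12$. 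Your heuristic that $|\nabla|^\alpha D(u)$ is a commutator $[|\nabla|^\alpha,u\cdot\nabla]|\nabla|^\alpha u$ is not accurate.

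The paper instead uses $\tilde B(v) = -|\nabla|^{-\alpha}P\bigl[(v\cdot\nabla)|\nabla|^\alpha v\bigr]$ (equation \eqref{e:equvvelocity}): only the transported object is conjugated, the transport vector field $v$ is left unchanged. Then $G_\alpha(u)=|\nabla|^{-\alpha}P\div(u\otimes|\nabla|^\alpha u)-P\div(u\otimes u)$ is a genuine one-sided commutator: its symbol $|n|^{-\alpha}(|n|^\alpha-|\ell|^\alpha)$ vanishes to first order on the diagonal $|n|\approx|\ell|$, which is exactly what delivers the derivative gain of Proposition~\ref{prop:GalRegularity}. And since only one factor is raised by $|\nabla|^\alpha$, the twisted nonlinearity is a product of $\CC^{\alpha-\kappa}$ and $\CC^{-1-\kappa}$ objects, so no renormalisation is needed.

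Two further gaps. First, the shift in the paper is not designed to ``exponentially damp $u-v$''; it comes from an interpolating family $F_s=F\chi+sG\chi$ and a random ODE for $s\mapsto h_s$ chosen so that the time-$T$ endpoints $X^{(h_s,s)}_T$ are literally $s$-independent (Lemma~\ref{lem:randomODE}), with Novikov integrability following from the time-shifted regularisation of the Jacobian $J_{2\tau-T,\tau}$. Second, your Step~3 is circular: starting both systems from a $\rho$-distributed initial condition gives $v_T\sim\mu_\alpha$ only if $\rho=\mu_\alpha$, which is what you are trying to prove. The paper avoids this by first proving $\Law(X^{x_0}_t)\sim\Law(Y^{x_0}_t)$ for each fixed $x_0$, using strong Feller and irreducibility (Propositions~\ref{prop:SF} and~\ref{prop:equivalence}) to get $\rho\sim\Law(X^{x_0}_t)$, and then averaging $\Law(Y^{x_0}_t)$ over $x_0\sim\mu_\alpha$ (invariance of $\mu_\alpha$ for $Y$ via Proposition~\ref{prop:asGWPv}) to produce $\mu_\alpha$.
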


This leads to the question of how far this can actually be pushed. 
In order to investigate this, we consider the \textit{hypo}viscous
stochastic Navier--Stokes equations, namely
\begin{equ}[e:SNSgen]
        \partial_t u+(u\cdot\nabla)u=-|\nabla|^{2\gamma} u-\nabla p+\sqrt2\boldsymbol{\xi}_{\alpha + 1-\gamma}, \qquad
        \nabla\cdot u=0,
\end{equ}
for $\gamma \le 1$. Here the regularity index of the noise is chosen in such a way that solutions still
take values in $\CC^{\alpha - \kappa}$ for every $\kappa > 0$. It furthermore has the 
property that the invariant measure for the corresponding linear equation coincides with the
invariant measure $\mu_\alpha$ for \eqref{e:SNSlin}.

As before, \eqref{e:SNSgen} is locally well-posed in $\CC^{\alpha-\kappa}$ for every $\kappa>0$ as soon 
as $\alpha > 0$, provided
that $\gamma > \f12$ so that the dissipative term dominates the transport term. We will therefore always
make that assumption in the sequel. It is however more difficult to obtain global a priori
bounds when $\gamma < 1$ due to the fact that the equation is then energy-supercritical. As a consequence,
we will restrict ourselves to large enough values of $\alpha$ so that one can use 
enstrophy bounds to get global well-posedness.

Our main result then shows that the 
transition probabilities of \eqref{e:SNSgen} are also equivalent to the invariant measure 
of the corresponding Ornstein--Uhlenbeck process, at least when $\gamma > 2/3$ and $\alpha$ is large enough.

\begin{theorem} \label{theo:mainGeneral}
Let $\gamma \in (2/3,1]$ and $\alpha > 2-\gamma$. Then \eqref{e:SNSgen} admits global solutions 
and a
unique invariant measure $\rho$ in $\CC^{\alpha - \kappa}$ for every $\kappa \in (0,\alpha)$. 
Furthermore, $\rho$ is equivalent to $\mu_\alpha$.
\end{theorem}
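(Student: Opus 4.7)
The overall plan is to adapt the strategy sketched for Theorem \ref{theo:main} to the hypoviscous setting, which requires three ingredients: (i) global well-posedness of \eqref{e:SNSgen} in $\CC^{\alpha-\kappa}$; (ii) existence and uniqueness of an invariant measure $\rho$; (iii) equivalence $\rho \sim \mu_\alpha$ via a time-shifted Girsanov comparison with a ``twisted'' nonlinear dynamics that has $\mu_\alpha$ as its invariant measure. Local well-posedness follows from scaling once $\gamma > 1/2$ and $\alpha > 0$, since the dissipation $|\nabla|^{2\gamma}$ then beats the transport derivative. The threshold $\alpha > 2-\gamma$ is presumably exactly what lets a (fractional) enstrophy bound close a priori, giving global-in-time control of $u$ in $\CC^{\alpha-\kappa}$. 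Uniqueness of the invariant measure then follows from standard (asymptotic) strong Feller plus irreducibility arguments in the spirit of \cite{MR1868991,MR2259251}.

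The heart of the proof is the equivalence $\rho \sim \mu_\alpha$. Following the programme announced in the abstract, I would introduce a twisted equation of the form
\begin{equ}
\partial_t v = -|\nabla|^{2\gamma} v + \tilde N(v) - \nabla \tilde p + \sqrt 2\, \boldsymbol{\xi}_{\alpha + 1 - \gamma}\;, \qquad \nabla \cdot v = 0\;,
\end{equ}
where $\tilde N$ is a divergence-free drift, heuristically close to the Navier--Stokes nonlinearity $N(u) = -P((u\cdot \nabla) u)$, but modified so as to make $\mu_\alpha$ invariant for the twisted dynamics. Concretely, $\tilde N$ must be antisymmetric with respect to the Dirichlet form of the OU process associated to \eqref{e:SNSlin} (with its fractional dissipation replaced accordingly); since $N$ is divergence-free in phase space with respect to flat Lebesgue measure, $\tilde N$ is obtained from $N$ by adding a correction that compensates the non-flatness of the Gaussian weight $\mu_\alpha$. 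One then verifies that this twisted equation is itself globally well-posed and that $\mu_\alpha$ is indeed invariant, in the same parameter regime as (i)--(ii).

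The decisive step is then the generalized time-shifted Girsanov comparison between $u$ and $v$ alluded to in the abstract. This reduces to constructing, on a long interval $[0,T]$, an adapted control $h$ such that after a time shift the law of the Girsanov-transformed Navier--Stokes trajectory matches that of $v$, subject to the Novikov-type condition
\begin{equ}
\int_0^T \bigl\| |\nabla|^{-(\alpha + 1 - \gamma)} h_s \bigr\|_{L^2}^2\, ds < \infty
\end{equ}
almost surely. The integrand is quadratic in the discrepancy $\tilde N(w) - N(w)$ evaluated along Gaussian-like paths $w$, so controlling it is a question of how many derivatives the twisting drift costs relative to the Cameron--Martin regularity of $\boldsymbol{\xi}_{\alpha+1-\gamma}$. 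The main obstacle, and the reason we expect genuine work here, is that a naive derivative count only yields the weaker threshold identified in \cite{Singular}; to reach $\gamma > 2/3$ and $\alpha > 2-\gamma$ one must exploit the specific Navier--Stokes cancellations (in particular $\langle (u\cdot \nabla) u, u\rangle = 0$ and the associated enstrophy identities) that were already the essential ingredient in Theorem \ref{theo:main}. The time-shift device is crucial to convert what would otherwise be a divergent instantaneous bound into a finite $L^2_t$ bound, by leveraging stationarity and the Gaussian tails of $\mu_\alpha$ over large time windows.
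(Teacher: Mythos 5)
Your high-level outline (global well-posedness, uniqueness of $\rho$, then a time-shifted Girsanov comparison to a twisted equation leaving $\mu_\alpha$ invariant) matches the paper's strategy, but two of the three genuinely hard steps are left at a level where the argument would not actually go through as you describe it.

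First, you never exhibit the twisted drift $\tilde N$, and the mechanism you offer for finding it --- ``adding a correction that compensates the non-flatness of the Gaussian weight'' --- points in the wrong direction. The construction in the paper is a \emph{conjugation}, not an additive correction: in vorticity form one takes $\tilde N(v) = -|\nabla|^{-\alpha}(\CK v\cdot\nabla)|\nabla|^{\alpha}v$, so that $\hat v = |\nabla|^\alpha v$ solves a Navier--Stokes-type equation \eqref{e:hatv} for which white noise is invariant exactly as in \cite{DPD}. It is this specific algebraic form that simultaneously makes $\mu_\alpha$ invariant \emph{and} makes the discrepancy $G_\alpha = \tilde N - N$ a commutator $[|\nabla|^{-\alpha}, u\cdot\nabla]|\nabla|^\alpha$, which is the object whose mapping properties drive the whole Girsanov estimate. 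Relatedly, you attribute the required square-integrability of the Girsanov drift to the energy cancellation $\langle(u\cdot\nabla)u,u\rangle=0$ and ``enstrophy identities''. That cancellation is used for a priori/global bounds and for the invariance of $\mu_\alpha$, but it plays no role in the key regularity gain: what makes the comparison work is the purely deterministic commutator estimate (Proposition~\ref{prop:GalRegularity}), proved via Coifman--Meyer paraproducts, showing $G_\alpha$ gains roughly one derivative over the naive count. (The divergence-free constraint is used there in the resonant case, but that is a different cancellation from the energy identity.) Without that estimate one only recovers the threshold of \cite{Mattingly2005-aj,Singular}, which is precisely what the paper is trying to surpass. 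As a minor point, your stated Novikov condition has the exponent with the wrong sign: shifts on the noise $\boldsymbol{\xi}_{\alpha+1-\gamma}$ need $\int_0^T\||\nabla|^{\alpha+1-\gamma}h_s\|_{L^2}^2\,ds<\infty$, not $|\nabla|^{-(\alpha+1-\gamma)}$.

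Second, you invoke the ``time-shift device'' without confronting the fact that the classical time-shifted Girsanov argument of \cite{Mattingly2005-aj} applies to \emph{linear} equations --- the shift $Z\mapsto 2P^{(\gamma)}_{T-t}Z(2t-T)$ commutes with the heat flow but not with the nonlinearity. The paper's actual mechanism (Proposition~\ref{prop:equiv} and Lemma~\ref{lem:randomODE}) is to interpolate $F_s=F+sG$ between the two nonlinear drifts, linearise in $s$, apply the time-shift idea to the Jacobian $J^{(h_s,s)}$, and solve a fixed-point ODE \eqref{e:shiftEquation} in $s$ for the adapted drift $h_s$, with cutoffs to make the ODE globally solvable and adaptedness preserved under restriction. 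This ``integrating up infinitesimal Girsanov shifts'' step is the main technical novelty, and it is also the place where the constraint $\gamma>2/3$ enters (via the smoothing of the Jacobian, Lemma~\ref{lem:boundsLinear}); your sketch does not contain it and there is no obvious way to complete the argument without something like it.
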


\begin{remark}
One would expect to be able to adapt the proofs given in \cite{TommasoNS,WenhaoNS} to obtain 
global well-posedness and the existence of a unique invariant measure for all $\alpha > 1$ in 
the regime $\gamma \in (\f23,1)$, but this is not the focus of the present work.
\end{remark}

\begin{remark}
One reason for the condition $\gamma > 2/3$ is that we are no longer able to prove that solutions 
to \eqref{e:SNSgen} are strong Feller when it fails. This is because when $\gamma \le 2/3$ the linearised 
equation fails to be sufficiently regular for our application of the Bismut--Elworthy--Li formula.
The same lack of regularity of the Jacobian however also causes a crucial step in our proof to fail, 
see the end of the next section.
\end{remark}

\begin{remark}
    We shall see that the constraint $\alpha>2-\gamma$ only arises to ensure \eqref{e:SNSgen} has global solutions (and an invariant measure), and we believe our abstract equivalence result Proposition \ref{prop:equiv} may be applied to a lower-regularity regime. Namely for $\gamma\in(2/3,1)$ and $\alpha>2-2\gamma$, for any $t>0$ we have
    \begin{equation*}
        \Law\bigl(u(t);T^*>t\bigr)\ll \mu_\alpha\;,
    \end{equation*}
where $\Law\bigl(u(t);T^*>t\bigr)$ is the law of the solution at time $t$ restricted to the event 
where solutions have not blown up yet (in the space $\CC^{\alpha-\kappa}$).
\end{remark}

The remainder of the article consists of three parts. In Section~\ref{sec:Idea}, we recall a few existing
results that we will make use of and we explain the idea of the proof of our main result.
Section~\ref{sec:bounds} is devoted to some a priori bounds that are useful for the main proof.
In particular, these bounds guarantee the existence of a unique invariant measure in
the regimes of interest to us. Finally, we provide the proof of Theorems~\ref{theo:main} and~\ref{theo:mainGeneral} 
in Section~\ref{sec:proof}.

\subsection{Notations}

We let $\T^2=(\R/2\pi\Z)^2$ with normalised integral $\int_{\T^2}dx=1$. For $f\in\S'(\T^2;\R^2)$, we denote the Fourier transform
\begin{equ}
    \ft{f}(n)=\F[f](n)=\int_{\T^2}f(x)e^{in\cdot x}\,dx\;,
\end{equ} for $n\in\Z^2$. Throughout we shall let $L^p,H^\beta,\CC^\beta$ denote the closure of smooth, divergence-free and mean-free vector fields under the norms $L^p(\T^2;\R^2),\,H^\beta(\T^2;\R^2),\,\CC^\beta(\T^2;\R^2)\,$ respectively, where we recall the H\"older-Besov norm \begin{equ}
    \|f\|_{\CC^\beta}=\sup_{N \textit{ dyadic integer}}N^{\beta}\|P_N f\|_{L^\infty}\;,
\end{equ} for $(P_N)_{N \text{ dyadic}}$ a family of Littlewood--Paley projectors on the torus. We let $A\lesssim B$ mean that there is a uniform constant $C>0$ such that $A\leq C B$, and $A\ll B$ means there is a sufficiently small $0<C<1$ such that $A\leq CB$. We let $P$ denote the Leray projector on $\S'(\T^2;\R^2)$, and we recall for divergence-free $f,g$
\begin{equation*}
    P\div \bigl(f \otimes g\bigr)=P\bigl((f\cdot\nabla)g\bigr)\;,
\end{equation*} and the estimate for any $s>0$
\begin{equ}
    \|P\div\bigl(f \otimes g \bigr)\|_{\CC^{s-1}}\lesssim \|f\|_{\CC^s}\|g\|_{\CC^s}\;.
\end{equ} We shall use the notation $\|\cdot\|\eqdef\|\cdot\|_{L^2(\T^2)}$, and also abbreviate path-space norms $L^p([0,T];X)$ as $L^p_TX$ or $L^p_TX_x$, for some Banach space $X$ of functions on $\T^2$. For $\gamma>0$ we let $P^{(\gamma)}_t$ denote the semigroup generated by $-|\nabla|^{2\gamma}$ on the spaces $\CC^\beta$, with $\gamma=1$ corresponding to the heat semigroup (restricted to divergence and mean-free functions). We note that $P^{(\gamma)}$ forms a strongly continuous semigroup on $\CC^\beta$, and similarly to the kernel estimates of \cite{Wu01,ezz08}, we see that for any $a\in\R$, $b\geq 0$
\begin{equ}\label{e:semigroupbounds}
    \|P^{(\gamma)}_{t}f\|_{\CC^{a+b}}\lesssim t^{-\frac{b}{2\gamma}}\|f\|_{\CC^{a}}\;,
\end{equ}
for all $t>0$ and $f\in\CC^{a}$.

For Banach spaces $X,Y$, we say $F:X\rightarrow Y$ is \textit{uniformly smooth} (from $X$ to $Y$) if it is infinitely differentiable (in the Fr\'echet sense) with all derivatives bounded on bounded sets. Note that the derivatives of a uniformly smooth function are Lipschitz on bounded sets (which may not hold if the
function is merely Fréchet smooth). We say that a collection $(F_s)_{s\in[0,1]}$ of maps $F_s:X\rightarrow Y$ is a \textit{uniformly smooth path} if $\sup_{\|x\| \le R} \sup_{s \in [0,1]} \|D^k F_s(x)\| < \infty$
for every $R > 0$ and $k \ge 0$.

\subsection*{Acknowledgements}

{\small
The work of MH was partially supported by the Simons Collaboration 
``probabilistic paths to QFT''.
He would also like to thank Jonathan Mattingly for many long discussions on this problem.
}

\section{Idea of proof}
\label{sec:Idea}

Before we start sketching the idea of our proof, we collect in one statement the well-posedness
results we require for our questions to even be well posed.

\begin{proposition}\label{prop:apriori}
Let $\gamma \in (1/2,1]$ and let $\alpha > 0$. Then, \eqref{e:SNSgen} admits unique local mild solutions
in $\CC^{\alpha - \kappa}$ for every $\kappa \in (0,\alpha)$.
If either $\gamma = 1$ and $\alpha > 0$ or $\gamma \in (1/2,1)$ and $\alpha > 2-\gamma$, then these solutions
are global in time and \eqref{e:SNSgen} admits an invariant probability measure on $\CC^{\alpha - \kappa}$.
If furthermore $\gamma > 2/3$, then the Markov semigroup generated by \eqref{e:SNSgen} has the strong Feller
property and is topologically irreducible. In particular, the invariant measure $\rho$ is unique
and all transition probabilities are equivalent to $\rho$.
\end{proposition}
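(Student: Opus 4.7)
The plan is to follow the standard paradigm for dissipative SPDEs with additive noise. I would write $u = Z + v$, where $Z$ solves the linear equation $\partial_t Z = -|\nabla|^{2\gamma} Z + \sqrt{2}\boldsymbol{\xi}_{\alpha+1-\gamma}$ with $Z(0)=0$. The choice of noise regularity in \eqref{e:SNSgen} is tuned precisely so that $Z \in C([0,T];\CC^{\alpha-\kappa})$ almost surely for every $\kappa > 0$ (the spatial covariance $|n|^{-2(\alpha+1-\gamma)}$ combined with the $\gamma$-fractional heat semigroup produces $\alpha$ spatial derivatives), and so that its invariant law coincides with $\mu_\alpha$. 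The remainder $v$ then satisfies a random PDE driven by the nonlinearity $P\div(u\otimes u)$, and combining \eqref{e:semigroupbounds} with the paraproduct estimate $\|P\div(f\otimes g)\|_{\CC^{s-1}}\lesssim \|f\|_{\CC^s}\|g\|_{\CC^s}$ gives a contraction mapping argument in $C([0,T];\CC^{\alpha-\kappa})$, provided the integrability of $t^{-1/(2\gamma)}$ near $t=0$ holds, i.e.\ $\gamma > 1/2$.

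For global well-posedness, when $\gamma = 1$ the classical enstrophy estimate on $v$ (using the cancellation $\int (u\cdot\nabla)\omega\cdot\omega\,dx = 0$ for divergence-free $u$) combined with uniform $\CC^{\alpha-\kappa}$ bounds on $Z$ closes via Gr\"onwall. For $\gamma\in(1/2,1)$ the equation is energy-supercritical, and one instead performs an enstrophy-type estimate in a Sobolev norm strong enough to control the transport term through Sobolev embedding; this is where $\alpha > 2-\gamma$ enters, as it gives enough regularity margin on $Z$ to absorb the commutator with the nonlinearity. Existence of an invariant measure on $\CC^{\alpha-\kappa}$ then follows from Krylov--Bogoliubov, using that the a priori bounds provide moments of $\|u(t)\|_{\CC^{\alpha-\kappa'}}$ with $\kappa' < \kappa$, yielding tightness of the time averages via the compact embedding $\CC^{\alpha-\kappa'} \hookrightarrow \CC^{\alpha-\kappa}$.

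For the strong Feller property when $\gamma > 2/3$, I would apply a Bismut--Elworthy--Li formula based on the linearized flow. The Jacobian $J_{0,t}h$ solves the linearization $\partial_t J = -|\nabla|^{2\gamma} J - P\bigl((J\cdot\nabla)u+(u\cdot\nabla)J\bigr)$, which must be controlled in a norm compatible with Malliavin differentiation in the direction of $\boldsymbol{\xi}_{\alpha+1-\gamma}$. The transport term costs effectively one derivative while the dissipation only smooths by $2\gamma$ (with the $t^{-1/(2\gamma)}$ singularity of \eqref{e:semigroupbounds}), and the threshold $\gamma > 2/3$ is exactly what is needed for the relevant $L^p_t$ integrability to close. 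Topological irreducibility is by a standard approximate controllability argument: one chooses a deterministic drift that steers the system into any neighbourhood of $0$ in finite time. Uniqueness of $\rho$ and equivalence of all transition probabilities to $\rho$ then follow from Doob's theorem.

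The principal obstacle is the Jacobian estimate required for Bismut--Elworthy--Li, since this is the only place where the threshold $\gamma > 2/3$ (rather than merely $\gamma > 1/2$) genuinely appears; everything else is a careful, but essentially routine, adaptation of known techniques for \eqref{e:SNS} to the hypoviscous regime. The global-in-time enstrophy bound for $\gamma < 1$ is the second delicate point, as one must select a Sobolev norm for $v$ that is simultaneously controllable by the dissipation and sufficient to tame the nonlinearity, which forces the condition $\alpha > 2-\gamma$.
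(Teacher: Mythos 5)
Your overall route---additive-noise decomposition $u = Z + v$ for local well-posedness, a priori bounds closing the global theory, Krylov--Bogoliubov for the invariant measure, and a Bismut--Elworthy--Li/Malliavin argument for strong Feller with controllability for irreducibility---matches the paper's strategy at the structural level. However, there are two concrete places where your sketch would not go through as stated.

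\emph{The $\gamma=1$, small $\alpha$ case.} You propose a ``classical enstrophy estimate on $v$'' for $\gamma=1$, using the cancellation $\int(u\cdot\nabla)\omega\cdot\omega\,dx=0$. This requires testing the vorticity equation for $v$ against $\omega_v=\nabla\wedge v$, and the dangerous term is $\langle\omega_v,(u\cdot\nabla)\omega_Z\rangle$ with $\omega_Z=\nabla\wedge Z\in\CC^{\alpha-1-\kappa}$. For $\alpha\le 1$ this is a distribution of negative regularity, and the pairing does not close by any elementary duality in $L^2$-based spaces. This is precisely why the paper covers $\gamma=1$, $\alpha>0$ via the \emph{energy} ($L^2$) estimate on $v$ in Lemma~\ref{lem:apriori}, and even then relies on the global well-posedness and moment bounds of \cite{WenhaoNS} (obtained by different techniques) before bootstrapping to $\CC^{\alpha-\kappa}$. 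The enstrophy estimate (Lemma~\ref{lem:apriori0}) is used only when $\alpha>2-\gamma$, which for $\gamma=1$ means $\alpha>1$.

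\emph{Uniform-in-time moment bounds.} You invoke Gr\"onwall together with ``uniform $\CC^{\alpha-\kappa}$ bounds on $Z$'' to close the global estimate, and then Krylov--Bogoliubov for the invariant measure. But $\sup_{t\ge 0}\|Z(t)\|_{\CC^{\alpha-\kappa}}$ is a.s.\ infinite for a stationary Gaussian process, and the naive Gr\"onwall bound gives exponential growth of $\E\|u(t)\|^2$ in $t$, not the uniform moments needed for tightness. The paper circumvents this with the $K$-damping device from \cite{Franco}: one writes $u=\OU+v$ with $\OU$ the stationary OU process with an additional damping $K$ as in \eqref{e:defOUK}, chooses $K$ large so that $\OU$ is small in the relevant norm with high probability, and then propagates a careful exponential-moment estimate (the proof of Lemma~\ref{lem:goodBounds}). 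This step is emphasized in the paper as the non-routine part; your sketch treats it as routine, which is the main gap.

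On the remaining points, your account of the $\gamma>2/3$ threshold is in the right spirit: the Jacobian $J_{0,t}$ gains at most $2\gamma-1$ derivatives over the base regularity (the term $(\CK J\cdot\nabla)w$ prevents more), and to land in the Cameron--Martin regularity $\alpha+1-\gamma$ one needs $2\gamma-1>1-\gamma$, i.e.\ $\gamma>2/3$. The paper implements this through \cite[Thm~3.2]{SFGeneral} and Lemma~\ref{lem:boundsLinear} rather than the classical Bismut--Elworthy--Li formula per se, but the mechanism is the same. Finally, Doob's theorem and Khasminskii's theorem are interchangeable here; the paper uses the latter.
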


\begin{proof}
The local solution theory for \eqref{e:SNSgen} in $\CC^{\alpha-\kappa}$ for 
every $\kappa > 0$ follows from standard techniques, see
for example \cite{Da_Prato_Zabczyk_2014} or \cite{SPDENotes}.
The remainder of the statement is a combination of Lemmas~\ref{lem:goodBounds} and~\ref{lem:apriori}, 
as well as the proof of Proposition~\ref{prop:SF} below.
These are by now quite standard so we will not dwell on them, but we provide 
sketches of proofs in Section~\ref{sec:bounds}.
\end{proof}

\begin{remark}
One can in fact obtain a local solution theory for \eqref{e:SNS} in $\CC^{\alpha-\kappa}$ in a suitable
renormalised sense for every $\alpha > -1$ as a consequence of the general theory developed in
\cite{RegStruc,AlgRen,GenRen,Rhys}. It is however not known in general
whether these solutions are global (and a fortiori whether they admit an invariant measure) 
when $\alpha < 0$. See however \cite{DPD,WenhaoNS} for the borderline case $\alpha = 0$.
\end{remark}

\begin{remark}
It was shown in \cite{Franco} that \eqref{e:SNS} admits global solutions and an invariant
measure when $\alpha > 1/2$. In \cite{Benedetta,ErgodNS}, it was furthermore shown
that the Markov semigroup generated by \eqref{e:SNS} is strong Feller
and topologically irreducible when $\alpha > 1$, and therefore that its invariant measure is unique.
We show that current techniques allow to extend these results to the range $\alpha > 0$
and we will cover the hypoviscous case $\gamma < 1$ provided that $\alpha > 2-\gamma$.
The reason why we need one additional derivative in this case is that the $L^2$ norm becomes
supercritical when $\gamma < 1$ and we need to rely on enstrophy bounds instead.
\end{remark}

%

Let us first rewrite \eqref{e:SNSgen} in vorticity form as
\begin{equ}[e:vort]
        \partial_t w=-|\nabla|^{2\gamma} w- (\CK w\cdot\nabla)w + \sqrt2\xi_{\alpha -\gamma}\;,
\end{equ}
where $\xi_{\alpha -\gamma} = \nabla \wedge \boldsymbol{\xi}_{\alpha + 1 -\gamma}$ and $\CK w$ is the unique divergence-free and mean-free vector field
such that $\nabla \wedge \CK w = w$.
Consider then instead the equation
\begin{equ}[e:equv]
        \partial_t v=-|\nabla|^{2\gamma} v- |\nabla|^{-\alpha}(\CK v\cdot\nabla)|\nabla|^{\alpha}v + \sqrt2\xi_{\alpha -\gamma}\;.
\end{equ}
Setting $\hat v = |\nabla|^\alpha v$ we then see that, at least formally, $\hat v$ solves
\begin{equ}[e:hatv]
        \partial_t \hat v=-|\nabla|^{2\gamma} \hat v- (\hat \CK \hat v\cdot\nabla)\hat v + \sqrt2\xi_{-\gamma}\;,
\end{equ}
where $\hat \CK = \CK |\nabla|^{-\alpha}$. This is designed in such a way that the 
corresponding Ornstein--Uhlenbeck process
\begin{equ}
\partial_t \psi =-|\nabla|^{2\gamma} \psi + \sqrt2\xi_{-\gamma}\;,
\end{equ}
admits the white noise measure as its unique invariant measure.
Since $\hat \CK \hat v$ is divergence free, one has
$\scal{\hat v,(\hat \CK \hat v\cdot\nabla)\hat v} = 0$, which strongly suggest that 
the white noise measure is also invariant for \eqref{e:hatv} in the same way as in \cite{DPD}.

This immediately implies that the Markov semigroup generated by \eqref{e:equv} also admits 
the same invariant measure as 
the corresponding Ornstein--Uhlenbeck process. When $\gamma > 2/3$, we can show
that the corresponding Markov semigroup is 
topologically irreducible and strong Feller, so that its transition probabilities
are all equivalent to its invariant measure \cite{DPZ2}. The claim then follows
if we can show that the transition probabilities for \eqref{e:vort} are equivalent to those
for \eqref{e:equv}. Similarly to the methodology used in \cite{Mattingly2005-aj,MRS22,Singular}, 
the idea is to construct a process $Z$ such that the solution to 
\begin{equ}[e:vortshift]
        \partial_t w=-|\nabla|^{2\gamma} w- (\CK w\cdot\nabla)w + Z + \sqrt2\xi_{\alpha -\gamma}\;,
\end{equ}
coincides with that of \eqref{e:equv} at some fixed final time $T$ and then make use
of Girsanov's theorem. For this, we need to make sure that $Z$ is adapted to the
filtration generated by $\xi$ and belongs to its Cameron--Martin space, namely
that one has $Z \in L^2([0,T],H^{\alpha-\gamma})$ almost surely.

A ``naive'' choice for $Z$ would be to simply take
\begin{equ}
Z = (\CK w\cdot\nabla)w - |\nabla|^{-\alpha}(\CK w\cdot\nabla)|\nabla|^\alpha w\;.
\end{equ}
The ``fractional Leibniz rule'' suggests that, at least when $\alpha \le 1$, this 
expression has the same regularity properties as  
$|\nabla|^{-\alpha}(|\nabla|^\alpha \CK w\cdot\nabla) w$ while when $\alpha > 1$
we expect it to behave like $|\nabla|^{-\alpha}(|\nabla| \CK w\cdot\nabla) |\nabla|^{\alpha-1} w$. 
Since we expect $w$ to
have regularity (almost) $\alpha-1$, one could expect $Z$ to have 
 regularity almost $(\alpha-1) \wedge (2\alpha-2)$ (but no better), which is however not good enough to
 be able to apply Girsanov since, when $\gamma < 1$, this does not belong to the
 Cameron--Martin space $L^2_T H^{\alpha - \gamma}$.

The idea in \cite{Mattingly2005-aj} is to observe that, at time $T$, the solution to
\begin{equ}[e:X]
\d_t X = -|\nabla|^{2\gamma} X + Z(t) + \sqrt2\xi_{\alpha-\gamma}\;,\quad X(0) = X_0\;,
\end{equ}
coincides with that to
\begin{equ}
\d_t Y = -|\nabla|^{2\gamma} Y + 2P_{T-t}^{(\gamma)} Z(2t-T)\one_{2t > T} + \sqrt2\xi_{\alpha-\gamma}\;,\quad Y(0) = X_0\;.
\end{equ}
This is because, writing $\OUw$ for the solution to \eqref{e:X} with $Z=0$ and 
$P_{t}^{(\gamma)}$ for the semigroup generated by $-|\nabla|^{2\gamma}$, one has
\begin{equs}
X(T) &= \OUw + 
\int_0^T P_{T-s}^{(\gamma)} Z(s)\,ds
= \OUw + 2\int_{T/2}^T P_{2(T-r)}^{(\gamma)} Z(2r-T)\,dr\\
&= \OUw + 2\int_{T/2}^T P_{T-r}^{(\gamma)} P_{T-r}^{(\gamma)} Z(2r-T)\,dr = Y(T)\;.\label{e:SGTrick}
\end{equs}
Note now that if $Z \in \CC^{\alpha-1}$, then 
$\|P_t^{(\gamma)} Z\|_{\CC^{\alpha-\gamma}} \lesssim t^{\f{\gamma-1}{2\gamma}}$ which is
square integrable as soon as $\gamma > \f12$. In the low regularity regime $\alpha \le 1$,
we expect to have $Z \in \CC^{2(\alpha-1)}$ and a similar calculation yields the condition
$\gamma > 1-\f\alpha2$.

The problem with this argument is that \eqref{e:vortshift} is not actually of the form
\eqref{e:X} due to the presence of the nonlinearity. This difficulty can be overcome
in a way reminiscent of \cite{SFGeneral} by applying the above argument to the linearisation
of \eqref{e:vort} and then integrating up the ``infinitesimal Girsanov shifts''
produced in this way. The reason why we need to impose the more stringent
restriction $\gamma > \f23$ is that the linearisation $J$ of \eqref{e:vortshift} 
solves
\begin{equ}
\d_t J_{s,t} = -|\nabla|^{2\gamma} J_{s,t} - (\CK w\cdot\nabla)J_{s,t} - (\CK J_{s,t}\cdot\nabla)w\;,
\quad J_{s,s} = \id\;.
\end{equ}
As a consequence of the presence of the last term, we do not expect the regularity of $J$ to ever
go beyond  $2\gamma-1$ derivatives more than that of $w$.
Since otherwise the short-time regularisation properties of $J$ are analogous to those of the 
semigroup  $P_{t}^{(\gamma)}$, we expect the above argument to work as long as
$(2\gamma -1) + (\alpha-1) > \alpha - \gamma$, leading to the condition $\gamma > \f23$.

\remark{It is interesting to consider noises with more general covariance operators. Assuming Gaussian noise with Cameron--Martin space $L^2_TH$, letting $Q$ be the (spatial) covariance operator, then the above argument requires that the commutator
\begin{equ}
    C_Q[w]=(\CK w\cdot\nabla)w - Q(\CK w\cdot\nabla)Q^{-1} w
\end{equ} maps solutions into $|\nabla|^{\gamma}H$. Just assuming $Q\approx |\nabla|^{\gamma-1-\alpha}$ is not sufficient. Consider
\begin{equ}
    \ft{Qf}(n)=\s_n |n|^{\gamma-1-\alpha}\ft{f}(n),
\end{equ} for weights $\s_n=2+(-1)^{|n|}$, then we have $|Q|\approx |\nabla|^{\gamma-1-\alpha}$ but the commutator estimate fails. In this case, the diverging quantities from \cite{Singular} indicate singularity. So equivalence relies on more structure of the covariance operator than just Sobolev mapping properties, namely the smoothness of the symbol.}

\section[Properties of solutions]{Properties of the solution to stochastic Navier--Stokes equations}
\label{sec:bounds}

In this section, we obtain some a priori bounds on \eqref{e:SNSgen} and its
linearisation. These bounds are obtained in a way that is very similar to the aforementioned 
bounds for the  ``normal'' stochastic Navier--Stokes equations \eqref{e:SNS}, but the
uniform tightness bound (which yields the existence of an invariant measure) is slightly more
delicate than \cite{Franco}, so we focus on that.

\subsection{Bounds on stochastic convolution}

Similarly to the case of the usual Navier--Stokes equations, one can rewrite \eqref{e:SNSgen} as
\begin{equ}
        \partial_t u=-|\nabla|^{2\gamma} u-P\div \bigl(u \otimes u\bigr)+\sqrt2\boldsymbol{\xi}_{\alpha + 1-\gamma},
\end{equ}
where $P$ denotes the Leray projection. We exploit the following trick previously used in 
\cite{Franco}. For $K>0$ (which we think of as large), we write $\OU$ for the stationary solution to the linear equation
with an additional damping of size $K$, namely
\begin{equ}[e:defOUK]
\OU(t) = \sqrt2\int_{-\infty}^t e^{-K(t-s)} \CP^{(\gamma)}_{t-s}\,\boldsymbol{\xi}_{\alpha + 1-\gamma}(s,\cdot)\,ds\;.
\end{equ}
The idea will be to make $K$ large enough so that $\OU$ is small in suitable norms. One has the following
bound.

\begin{lemma}\label{lem:OU}
For $\kappa > 0$ small enough, there exists a constant $C>0$ such that
\begin{equ}
\E \sup_{t \in (0,1]} K^{\kappa} \|\OU(t)\|_{\CC^{\alpha - 2\kappa}} \le C\;,
\end{equ}
holds uniformly over all $K\ge 1$.
\end{lemma}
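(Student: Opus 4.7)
The plan is to exploit that $\OU$ is a stationary Gaussian field and reduce everything to Fourier-side covariance computations combined with a Littlewood--Paley decomposition.

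First I would apply the Itô isometry to \eqref{e:defOUK} to obtain, for each $n \in \Z^2\setminus\{0\}$,
\[
\E|\widehat{\OU}(t,n)|^2 \lesssim \frac{|n|^{-2(\alpha+1-\gamma)}}{K+|n|^{2\gamma}}\,,
\]
independently of $t$ by stationarity. Since each Fourier mode is itself a stationary one-dimensional Ornstein--Uhlenbeck process of rate $\lambda_n := K+|n|^{2\gamma}$, the standard sup bound for OU (obtained by time-rescaling to unit rate) gives
\[
\E\sup_{t\in(0,1]}|\widehat{\OU}(t,n)|^2 \lesssim \frac{|n|^{-2(\alpha+1-\gamma)}}{\lambda_n}\log(2+\lambda_n)\,.
\]
Summing over $|n|\sim N$ and upgrading from $L^2_x$ to $L^\infty_x$ via Gaussian hypercontractivity together with Kolmogorov's continuity criterion on the space variable (using Bernstein's inequality for the smooth blocks $P_N$) yields, for every $p<\infty$,
\[
\E\sup_{t\in(0,1]}\|P_N\OU(t)\|_{L^\infty_x}^p \lesssim_p \sigma_N^p\bigl(\log(2+K+N)\bigr)^{Cp}, \qquad \sigma_N^2 := \frac{N^{-2\alpha+2\gamma}}{K+N^{2\gamma}}\,.
\]

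To pass to the $\CC^{\alpha-2\kappa}$ norm I would use the elementary inequality
\[
\sup_{t\in(0,1]}\|\OU(t)\|_{\CC^{\alpha-2\kappa}}^p \leq \sum_N N^{(\alpha-2\kappa)p}\sup_{t\in(0,1]}\|P_N\OU(t)\|_{L^\infty_x}^p\,,
\]
take expectation, and split the dyadic sum at $N_\star := K^{1/(2\gamma)}$. For $N\leq N_\star$ one has $\lambda_N\asymp K$ and $N^{\alpha-2\kappa}\sigma_N \lesssim K^{-1/2}N^{\gamma-2\kappa}$, whose maximum on this range (for $\kappa$ sufficiently small) occurs at $N=N_\star$ and equals $K^{-\kappa/\gamma}$; for $N>N_\star$ one has $\lambda_N\asymp N^{2\gamma}$ and $N^{\alpha-2\kappa}\sigma_N \lesssim N^{-2\kappa}$, again maximal at $N=N_\star$ giving $K^{-\kappa/\gamma}$. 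Extracting the $p$-th root produces
\[
\bigl(\E\sup_{t\in(0,1]}\|\OU(t)\|_{\CC^{\alpha-2\kappa}}^p\bigr)^{1/p} \lesssim K^{-\kappa/\gamma}(\log K)^C\,.
\]
Multiplying by $K^\kappa$ yields $K^{\kappa(1-1/\gamma)}(\log K)^C$, which is bounded uniformly in $K\geq 1$ since $\gamma\leq 1$: when $\gamma<1$ the strictly decreasing algebraic factor $K^{-\kappa(1/\gamma-1)}$ dominates the logarithm, and in the borderline case $\gamma=1$ one shrinks $\kappa$ slightly (equivalently, runs the argument with regularity loss $2\kappa+\delta$ in place of $2\kappa$, using $\CC^{\alpha-2\kappa-\delta}\supset\CC^{\alpha-2\kappa}$ in reverse after re-parameterising) to absorb the remaining polylogarithmic factors.

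The main delicacy is the time supremum uniformly in both $N$ and $K$: high-frequency modes of $\OU$ oscillate on the very short time scale $1/\lambda_n$, so a naive bound would suggest a polynomial-in-$\lambda_n$ penalty when passing from pointwise to time suprema, which would destroy the argument entirely. The classical time-rescaling trick for stationary OU reduces this cost to only $\sqrt{\log(2+\lambda_n)}$, and it is exactly this mild logarithmic loss that the Hölder--Besov regularity slack $2\kappa$ (together with the algebraic gain $K^{-\kappa/\gamma}$) is designed to absorb.
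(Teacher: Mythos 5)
Your route is genuinely different from the paper's: you decompose into Littlewood--Paley blocks, control the time-supremum of each Fourier mode $\widehat{\OU}(\cdot,n)$ as a one-dimensional stationary Ornstein--Uhlenbeck process, and sum the dyadic contributions. The paper instead computes the two-variable increments $\E|\OU(t,x)-\OU(s,x)|^2$ and $\E|\OU(t,x)-\OU(t,y)|^2$, extracts a uniform multiplicative factor $K^{-2\kappa}$ from the Fourier sum in both cases, and applies Kolmogorov's criterion once to the full field $(t,x)\mapsto\OU(t,x)$; this gives $\E\sup_t\|\OU(t)\|_{\CC^{\alpha-2\kappa}}\lesssim K^{-\kappa}$ with no logarithmic loss at all, for every $\gamma$. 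For $\gamma<1$ your method in fact produces the stronger algebraic power $K^{-\kappa/\gamma}$, comfortably absorbing the $(\log K)^C$ coming from the per-mode suprema, so that part of the argument is sound.

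The gap is at $\gamma=1$, which is precisely the case needed for Theorem~\ref{theo:main}. Your algebraic gain degenerates to exactly $K^{-\kappa}$ there, so after multiplying by $K^\kappa$ you are left with $(\log K)^C$, which is unbounded. The proposed fix of shrinking $\kappa$, or running with regularity loss $2\kappa+\delta$, cannot work: the lemma ties the prefactor $K^\kappa$ and the Besov index $\alpha-2\kappa$ to the \emph{same} $\kappa$, and your own computation shows that the power of $K$ gained from the Besov loss is exactly $\kappa/\gamma$, equal to $\kappa$ when $\gamma=1$, so the two contributions cancel for \emph{every} admissible $\kappa$. The logarithm itself is unavoidable in the per-mode framework (a stationary OU of rate $\lambda$ on $[0,1]$ really does have expected supremum of order $\sigma\sqrt{\log\lambda}$), so one must avoid the per-block union bound altogether; the paper's direct Kolmogorov argument does this, since there the $K$-gain appears as a multiplicative constant in front of the increment bounds, decoupled from the H\"older exponents. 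A secondary point: as stated, summing the per-mode suprema over $|n|\sim N$ and then applying Bernstein to pass from $L^2_x$ to $L^\infty_x$ costs an extra factor $N$ rather than a logarithm; the clean route to your claimed bound $\E\sup_t\|P_N\OU(t)\|_{L^\infty}^p\lesssim\sigma_N^p\,(\log(2+K+N))^{Cp}$, with $\sigma_N$ the pointwise standard deviation, is a single Dudley-type chaining estimate on the Gaussian field $(t,x)\mapsto P_N\OU(t,x)$ rather than the tool combination you describe. That is cosmetic; the $\gamma=1$ issue is not.
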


\begin{proof}
Writing $\zeta_k(t)$ for the $k$th Fourier coefficient of $\OU(t)$, we note that
these are jointly Gaussian and independent for different values of $k$. Furthermore,
one has
\begin{equ}
\E \zeta_k(t) \zeta_k(s) = \frac{|k|^{2\gamma-2\alpha-2}}{K+|k|^{2\gamma}} \exp \bigl(-(K+|k|^{2\gamma})|t-s|\bigr)\;.
\end{equ}
In particular, it follows that
\begin{equs}
\E |\OU(t,x)-\OU(s,x)|^2
&= \sum_k \frac{2|k|^{2\gamma-2\alpha-2}}{K+|k|^{2\gamma}} \Bigl(1-\exp \bigl(-(K+|k|^{2\gamma})|t-s|\bigr)\Bigr)\\
&\le \sum_k \frac{2|k|^{2\gamma-2\alpha-2}}{K+|k|^{2\gamma}} \bigl(1 \wedge (K+|k|^{2\gamma})|t-s|\bigr)\\
&\le \sum_k \frac{2|k|^{2\gamma-2\alpha-2}}{(K+|k|^{2\gamma})^{1-\kappa}} |t-s|^\kappa\\
&\le 2K^{-2\kappa} |t-s|^\kappa \sum_k |k|^{3\kappa -2\alpha-2} \le CK^{-2\kappa} |t-s|^\kappa\;.
\end{equs}
For $\alpha \le 1$, one similarly (see for example \cite[Cor.~4.24]{SPDENotes}) has
\begin{equs}
\E |\OU(t,x)-\OU(t,y)|^2 &\le 2|x-y|^{2\alpha-3\kappa} \sum_k \frac{|k|^{2\gamma-3\kappa-2}}{K+|k|^{2\gamma}}\\
&\le 2K^{-2\kappa} |x-y|^{2\alpha-3\kappa} \sum_k |k|^{-\kappa-2}\;,
\end{equs}
and the claim follows at once from Kolmogorov's criterion. For larger values of $\alpha$, it suffices to apply
the same argument to the derivatives of $\OU$.
\end{proof}

\begin{remark}
It follows immediately from Fernique's theorem \cite{Fernique,SPDENotes} that one has for every $p\ge 1$ the stronger bound
\begin{equ}
\E \sup_{t \in (0,1]} K^{\kappa p} \|\OU(t)\|^p_{\CC^{\alpha - 2\kappa}} \le C\;,
\end{equ}
where $C$ depends of course on $p$, but not on $K$.
\end{remark}

We then write $u = \OU + v$, so that $v$ solves the random PDE
\begin{equ}
\d_t v = -\abs\nabla^{2\gamma}v - P \div ((\OU + v)\otimes (\OU + v)) + K \OU\;.
\end{equ}
We also write $\|v\|$ for the $L^2$-norm of $v$ and $\|v\|_1$ for its homogeneous $H^1$ norm.
Since we restrict ourselves to divergence-free vector fields, we have $\|v\|_1 = \|\nabla \wedge v\|$.
Write now $\OUw = \nabla \wedge \OU$ and $w = \nabla \wedge v$, so that
\begin{equ}[e:equw]
\d_t w = -\abs\nabla^{2\gamma}w - ((\OU + v) \cdot \nabla) (\OUw + w) + K \OUw\;.
\end{equ}
\begin{lemma}\label{lem:apriori0}
Let $\gamma \in (\f12,1]$ and $\alpha > 2-\gamma$. Then, there exists $p>2$ such that one has the bound
\begin{equ}
\d_t \|w\|^2 \le -\|\abs\nabla^\gamma w\|^2 + C\|w\|^2 \|\OUw\|_{\CC^{1-\gamma}}^{2} + C_K(1+\|\OUw\|_{\CC^{1-\gamma}}^{4} )\;.
\end{equ}
\end{lemma}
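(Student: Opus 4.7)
The plan is to run a standard energy estimate at the vorticity level. Taking the $L^2$ inner product of \eqref{e:equw} with $w$ gives
\begin{equation*}
\tfrac12 \d_t \|w\|^2 = -\|\abs\nabla^\gamma w\|^2 - \langle w, ((\OU+v)\cdot\nabla)(\OUw + w)\rangle + K\langle w, \OUw\rangle\;.
\end{equation*}
Since $u=\OU+v$ is divergence-free and $w$ is scalar, the usual integration by parts gives $\langle w,(u\cdot\nabla)w\rangle=0$, so the only surviving contribution from the transport term is $\langle w,(u\cdot\nabla)\OUw\rangle=\langle w,\div(u\,\OUw)\rangle$.

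The heart of the estimate is to bound this remaining piece. Pairing $H^\gamma$ with its dual,
\begin{equation*}
|\langle w, \div(u\,\OUw)\rangle| \le \|\abs\nabla^\gamma w\|\cdot \|u\,\OUw\|_{H^{1-\gamma}}\;,
\end{equation*}
a Kato--Ponce (fractional Leibniz) estimate combined with the embedding $\CC^{1-\gamma}\hookrightarrow L^\infty$ (valid when $\gamma<1$; at the endpoint $\gamma=1$ one instead uses the strictly better regularity $\CC^{\alpha-1-\kappa}$ of $\OUw$, guaranteed by $\alpha>2-\gamma$) yields $\|u\,\OUw\|_{H^{1-\gamma}}\lesssim \|u\|_{H^{1-\gamma}}\|\OUw\|_{\CC^{1-\gamma}}$. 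Splitting $u=\OU+v$, one estimates $\|\OU\|_{H^{1-\gamma}}\lesssim \|\OUw\|_{\CC^{1-\gamma}}$ via the curl inversion $\OU=\nabla^\perp\Delta^{-1}\OUw$, and $\|v\|_{H^{1-\gamma}}\le\|v\|_{H^1}\lesssim \|w\|$ from $v=\CK w$ together with Poincar\'e on mean-zero, divergence-free vector fields. Altogether,
\begin{equation*}
|\langle w,(u\cdot\nabla)\OUw\rangle| \lesssim \|\abs\nabla^\gamma w\|\bigl(\|\OUw\|_{\CC^{1-\gamma}}+\|w\|\bigr)\|\OUw\|_{\CC^{1-\gamma}}\;.
\end{equation*}

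Two applications of Young's inequality then split this bound into a piece $\tfrac12\|\abs\nabla^\gamma w\|^2$ (absorbed into the dissipation), a quartic term $C\|\OUw\|_{\CC^{1-\gamma}}^4$, and the desired cross term $C\|w\|^2\|\OUw\|_{\CC^{1-\gamma}}^2$. The damping $K\langle w,\OUw\rangle$ is handled by Cauchy--Schwarz followed by Young, using the spectral-gap bound $\|w\|^2\le\|\abs\nabla^\gamma w\|^2$ (valid for mean-free $w$ on $\T^2$) to absorb the resulting $\|w\|^2$ into the dissipation; what remains is of the form $C_K(1+\|\OUw\|_{\CC^{1-\gamma}}^2)\le C_K(1+\|\OUw\|_{\CC^{1-\gamma}}^4)$. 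Multiplying the whole estimate by $2$ produces the claimed inequality. The main obstacle is the product estimate $\|u\,\OUw\|_{H^{1-\gamma}}$ at the endpoint $\gamma=1$, where $\CC^{1-\gamma}\hookrightarrow L^\infty$ is unavailable and one must trade a small portion of the gap $\alpha-(2-\gamma)>0$ for $\kappa$ derivatives of slack; everything else is a routine energy-method computation.
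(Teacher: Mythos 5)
Your proposal is correct and follows essentially the same route as the paper: the same $L^2$-energy estimate in vorticity, the same cancellation $\langle w,(u\cdot\nabla)w\rangle=0$, the same dual pairing $|\langle w,\div(u\,\OUw)\rangle|\le\|\abs\nabla^\gamma w\|\,\|u\,\OUw\|_{H^{1-\gamma}}$ followed by a fractional Leibniz estimate, and the same absorption via Young's inequality and the spectral gap. Your remark on the endpoint $\gamma=1$ (where $\CC^{1-\gamma}\hookrightarrow L^\infty$ fails and one instead uses the surplus regularity $\OUw\in\CC^{\alpha-1-\kappa}$ coming from $\alpha>2-\gamma$) is a genuine subtlety that the paper's write-up glosses over, so flagging it is welcome.
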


\begin{proof}
It follows from \eqref{e:equw} and the fact that $u = \OU + v$ is divergence free that
\begin{equ}
\d_t \|w\|^2 = -2 \|\abs\nabla^\gamma w\|^2 + 2\scal{\OUw,u \nabla w}
+ 2K \scal{w,\OUw}\;.
\end{equ}
Since $\nabla / \abs\nabla$ is a bounded operator from $L^2(\T^2, \R)$ into $L^2(\T^2, \R^2)$,
we have
\begin{equ}
|\scal{\OUw,u \nabla w}| \lesssim \|\abs\nabla^\gamma w\| \|\abs\nabla^{1-\gamma} (u\,\OUw)\|\;.
\end{equ}
The fractional Leibniz rule \cite[Thm~A.8]{KPV} (see \cite[Prop~1]{Benyi24} for the torus case) in turn shows that
\begin{equs}
\|\abs\nabla^{1-\gamma} (u\,\OUw)\|
&\lesssim \|\abs\nabla^{1-\gamma} u\| \|\OUw\|_{L^\infty}
+ \| u\|_{L^4} \|\abs\nabla^{1-\gamma} \OUw\|_{L^4} \\
&\lesssim \bigl(\|w\| + \|\OUw\|\bigr) \|\OUw\|_{\CC^{1-\gamma}}\;.
\end{equs} 
Since furthermore
\begin{equ}
|\scal{w,\OUw}| \lesssim \|w\| \|\OUw\|_{\CC^{1-\gamma}}\;,
\end{equ}
the claim follows at once.
\end{proof}

It turns out that this bound is strong enough to imply that the process $u$ has an invariant
measure with good moment bounds:

\begin{lemma}\label{lem:goodBounds}
Let $\gamma \in (\f12,1]$, $\alpha > 2-\gamma$, and $\kappa > 0$. Then, \eqref{e:SNSgen}
admits unique global mild solutions taking values in $\CC^{\alpha-\kappa}$, as well
as an invariant measure $\rho$ such that
\begin{equ}[e:goodIM]
\int \|u\|_{\CC^{\alpha-\kappa}}^p\,\rho(du) < \infty\;,
\end{equ}
for every $p>0$.
\end{lemma}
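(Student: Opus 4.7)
The plan is to upgrade the vorticity differential inequality of Lemma~\ref{lem:apriori0} into uniform-in-time moment bounds on the $H^1$-norm of $u$, then bootstrap these to $\CC^{\alpha-\kappa}$ via the mild formulation, and finally extract an invariant measure by applying Krylov--Bogoliubov to the time-averaged occupation measures.

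Since $\alpha > 2-\gamma$, the small parameter in Lemma~\ref{lem:OU} can be chosen so that $\OUw = \nabla\wedge \OU$ lies in $\CC^{1-\gamma}$ with $\E\|\OUw(t)\|_{\CC^{1-\gamma}}^{2p} \lesssim K^{-2\kappa p}$ uniformly in $t$ for every $p \ge 1$ (uniformity in $t$ comes from stationarity of $\OU$, arbitrary moments from Fernique). Combined with the Poincaré bound $\|\abs\nabla^\gamma w\|^2 \ge c\|w\|^2$ for mean-zero $w$, Lemma~\ref{lem:apriori0} rewrites as
\begin{equ}
\d_t \|w\|^2 \le \bigl(-c + C\|\OUw\|_{\CC^{1-\gamma}}^2\bigr) \|w\|^2 + C_K\bigl(1+\|\OUw\|_{\CC^{1-\gamma}}^4\bigr).
\end{equ}
A pathwise exponential Lyapunov argument with $\phi(t)=\exp\int_0^t (c-C\|\OUw(s)\|_{\CC^{1-\gamma}}^2)\,ds$ then yields, once $K$ is chosen large enough that $C\E\|\OUw\|_{\CC^{1-\gamma}}^2 \ll c$, uniform bounds $\sup_{t\ge 0}\E\|w(t)\|^{2p} \lesssim 1+\E\|w(0)\|^{2p}$ for every $p \ge 1$, i.e.\ uniform-in-time moment bounds on $\|u\|_{H^1}$.

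To bootstrap to $\CC^{\alpha-\kappa}$, I would apply the mild formulation for $u$ step by step. In two dimensions $H^1\hookrightarrow L^q$ for all $q<\infty$, so $u\otimes u \in L^q$ uniformly in time, giving $P\div(u\otimes u) \in \CC^{-1-\kappa'}$ for every $\kappa'>0$. The semigroup bound \eqref{e:semigroupbounds} then allows the Duhamel integral to absorb this nonlinearity while gaining almost $2\gamma$ derivatives, with an integrable singularity at the origin since $\gamma>1/2$; combined with Lemma~\ref{lem:OU} for the free contribution and iterated finitely many times, this yields $\sup_{t\ge 1}\E\|u(t)\|_{\CC^{\alpha-\kappa}}^p < \infty$ for every $p\ge 1$. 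In particular, these a priori bounds preclude blow-up and give global well-posedness.

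Uniform bounds in the slightly stronger space $\CC^{\alpha-\kappa/2}$ give tightness of the occupation measures $T^{-1}\int_0^T \Law(u(t))\,dt$ on $\CC^{\alpha-\kappa}$ by compact Besov embedding on the torus. The Krylov--Bogoliubov theorem combined with Feller continuity of the Markov semigroup then produces an invariant measure $\rho$, and the moment bound \eqref{e:goodIM} follows from the uniform $L^p$ bound by lower semi-continuity as $T\to\infty$. The delicate step is the first one: the constant $C$ in Lemma~\ref{lem:apriori0} is universal and not small, so one must genuinely exploit the smallness of $\OUw$ for large $K$ to close the dissipation estimate, which is exactly where the assumption $\alpha > 2-\gamma$ enters through the regularity budget in Lemma~\ref{lem:OU}.
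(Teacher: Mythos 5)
Your overall plan — start from Lemma~\ref{lem:apriori0}, extract uniform-in-time moment bounds on the $H^1$ (enstrophy) norm by choosing $K$ large, then bootstrap to $\CC^{\alpha-\kappa}$ through the mild formulation and conclude via Krylov--Bogoliubov — is exactly the route the paper takes.

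There is, however, one genuine gap at the single delicate step. After variation of constants, the integrating factor $\phi(t)^{-1}=\exp\bigl(-ct+C\int_0^t\|\OUw(s)\|_{\CC^{1-\gamma}}^2\,ds\bigr)$ appears, and closing the argument requires showing that
\begin{equ}
\E\,\sup_{t>0}\,e^{-ct}\exp\Bigl(C\int_0^t\|\OUw(s)\|_{\CC^{1-\gamma}}^2\,ds\Bigr)<\infty\;,
\end{equ}
i.e.\ an \emph{exponential} moment of the time-integrated squared norm. Your stated smallness condition ``$C\,\E\|\OUw\|_{\CC^{1-\gamma}}^2\ll c$'' only controls the mean of the exponent, which is far too weak; nothing in Fernique (which gives $\E\exp(c_0\|\OUw(s)\|^2)<\infty$ at a single time) lets you pass to the integral over $[0,t]$ without an additional decoupling step. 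The paper's proof handles this by decomposing the time axis into unit blocks, expressing $\OU\restr\R_+$ as a sum involving i.i.d.\ copies $\hat\OU_i$ (restarted at integer times), obtaining stochastic domination
\begin{equ}
\int_0^k\|\OUw(s)\|_{\CC^{1-\gamma}}^2\,ds\lesssim\sum_{i=0}^k\sup_{s\in[0,1]}\|\hat\OU_i(s)\|_{\CC^{2-\gamma}}^2\;,
\end{equ}
and then multiplying up the per-block bound $\E\exp\bigl(C\sup_{[0,1]}\|\hat\OU_i\|_{\CC^{2-\gamma}}^2\bigr)\le1+\delta$ (which is where Fernique and the $K^{-\kappa}$ smallness enter). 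Without this i.i.d.\ block decomposition, or an equivalent concentration estimate for the time-integral, the pathwise exponential Lyapunov argument does not close, and the uniform-in-time moment bound you assert is not yet established. The rest of your argument (Poincar\'e giving the dissipative constant $c$, the bootstrap from $H^1$ to $\CC^{\alpha-\kappa}$ via the smoothing of $P^{(\gamma)}_t$ with the integrable singularity $t^{-1/(2\gamma)}$ for $\gamma>1/2$, and Krylov--Bogoliubov) is fine and matches the paper.
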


\begin{proof}
A standard Picard iteration argument (see for example \cite[Thm~7.8]{SPDENotes}) 
and the fact that $\OU$ takes values in $\CC^{\alpha-\kappa}$ imply that, for $\delta > 0$
small enough, \eqref{e:SNSgen} admits unique local mild solutions with values in $\CC^{\alpha-\kappa}$
for every initial condition $u_0 \in \CC^{-\delta}$.

Assume now that we know that these solutions are global and admit an invariant measure under which
the $H^1$ norm of $u$ (and therefore also its $\CC^{-\delta}$ norm) has finite moments of all orders. 
Writing again $u$ for the stationary solution to \eqref{e:SNSgen}, we can then use repeatedly the 
fact that, for every norm $\$\bigcdot\$$, one has
\begin{equ}
\$u_1\$ \le \$\CP^{(\gamma)}_1 u_0\$ + \int_0^1 \$\CP^{(\gamma)}_{1-s} P\div (u_s \otimes u_s)\$\,ds
+ \$\OU\$\;,
\end{equ}
to show that \eqref{e:goodIM} does indeed hold.

In order to obtain the required moment bounds on the $H^1$ norm, we make use of Lemma~\ref{lem:apriori0}.
In view of that result, it suffices to show that, for every $C>0$ there exists
a value of $K$ such that 
\begin{equ}[e:wanted]
\E \sup_{t>0} e^{-t}\exp \Bigl(C\int_0^t \|\OUw(s)\|_{\CC^{1-\gamma}}^{2}\,ds \Bigr) < \infty\;.
\end{equ}
(Recall that $\OU$ and $\OUw$ are defined by \eqref{e:defOUK} and therefore depend on $K$.)
In view of Lemma~\ref{lem:OU} and Fernique's theorem, we conclude that for any 
$C>0$ and any $\delta > 0$ there exists $K$ such that 
\begin{equ}[e:boundOne]
\E \exp \Bigl(C \sup_{s \in [0,1]} \|\OU(s)\|_{\CC^{2-\gamma}}^{2}\Bigr) \le 1+\delta \;.
\end{equ}
A similar bound also holds for $\hat \OU$, defined like $\OU$ but with the time 
integral starting at $0$ rather than $-\infty$. If we write $\hat \OU_i$ for an i.i.d.\ sequence
of copies of $\hat \OU$ indexed by $i \in \N$ with the convention that $\hat \OU_{-1}$ is a copy
of $\OU$ (independent of the other $\hat \OU_i$'s) and we define a process $t \mapsto Z_t$
for $t \ge 0$ by
\begin{equ}
Z_{t} = \hat \OU_i(t-i) + \sum_{k = 0}^{i}e^{-K(t-k)} \CP^{(\gamma)}_{t-k} \hat \OU_{k-1}(1)\;,\qquad t \in [i,i+1)\;,
\end{equ}
then one can check that $Z$ and $\OU \restr \R_+$ have the same law. As a consequence, we 
have the stochastic domination
\begin{equ}
\int_0^k \|\OUw(s)\|_{\CC^{1-\gamma}}^{2}\,ds
\lesssim \sum_{i=0}^{k} \sup_{s \in [0,1]} \|\hat \OU_i(s)\|_{\CC^{2-\gamma}}^{2}\;.
\end{equ}
Combining this with \eqref{e:boundOne} and exploiting the independence of the 
$\hat \OU_i$'s, \eqref{e:wanted} follows at once.
\end{proof}

We also have

\begin{lemma}\label{lem:apriori}
Let $\gamma = 1$, $\alpha > 0$ and $\kappa \in (0,\alpha)$. Then, for every $u_0 \in \CC^{\alpha-\kappa}$, \eqref{e:SNS} 
admits unique global mild solutions in $\CC^{\alpha - \kappa}$. Furthermore, the
resulting Markov semigroup on  $\CC^{\alpha - \kappa}$ admits an invariant measure.
\end{lemma}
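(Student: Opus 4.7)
My plan is to parallel Lemma~\ref{lem:goodBounds}, exploiting two simplifications specific to $\gamma=1$: the $L^2$ energy bound (rather than the enstrophy bound) already suffices, and $\OU\in\CC^{\alpha-2\kappa}$ embeds into $L^\infty$ once $\kappa$ is small, which is precisely what opens up the full range $\alpha>0$. First I would establish local well-posedness in $\CC^{\alpha-\kappa}$ by a standard Picard iteration on the mild formulation, using that $\OU$ lies in this space by Lemma~\ref{lem:OU}, that $P\div(u\otimes u)$ is locally Lipschitz from $\CC^{\alpha-\kappa}$ into $\CC^{\alpha-\kappa-1}$, and that \eqref{e:semigroupbounds} closes the fixed-point argument on a short time interval.

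Next I would decompose $u=\OU+v$ with $\OU$ as in \eqref{e:defOUK}, and test against $v$ in $L^2$. Using that $u$ is divergence-free to cancel $\scal{v,(u\cdot\nabla)v}$ and then integrating by parts in the remaining nonlinear term to move the derivative off $\OU$ should yield
\[
\tfrac{d}{dt}\|v\|^2 + \|\nabla v\|^2 \lesssim \|\OU\|_{L^\infty}^2\,\|v\|^2 + (1+K^2)(1+\|\OU\|_{L^\infty}^4)\;,
\]
via $|\scal{v,(u\cdot\nabla)\OU}|\lesssim \|\nabla v\|\,(\|v\|+\|\OU\|)\,\|\OU\|_{L^\infty}$ followed by Young's inequality. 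The exponential moment bounds on $\|\OU\|_{L^\infty}$ supplied by Lemma~\ref{lem:OU} and Fernique's theorem, together with Grönwall, would give global $L^2$ control on $v$, and a bootstrap through the mild formulation using \eqref{e:semigroupbounds} would promote this to the required $\CC^{\alpha-\kappa}$ regularity.

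For the invariant measure I would apply Krylov--Bogoliubov. Taking $K$ large enough (by Lemma~\ref{lem:OU}) makes $\E\sup_{s\in[0,1]}\|\OU(s)\|_{L^\infty}^2$ arbitrarily small; combined with the Poincaré inequality $\|\nabla v\|^2\ge c\|v\|^2$ for mean-free $v$ and the stationary stochastic-domination argument already deployed in the proof of Lemma~\ref{lem:goodBounds}, the displayed inequality would yield $\sup_{t\ge 0}\E\|v(t)\|^{2p}<\infty$ for every $p\ge1$. A second mild-formulation bootstrap would then give $\sup_{t\ge 1}\E\|u(t)\|_{\CC^{\alpha-\kappa/2}}^p<\infty$, and the compact embedding $\CC^{\alpha-\kappa/2}\hookrightarrow\CC^{\alpha-\kappa}$ would provide tightness of the time-averaged laws and hence an invariant measure on $\CC^{\alpha-\kappa}$. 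The main obstacle I expect is precisely this last bootstrap: one must marry the short-time smoothing estimates to the stationary moment bounds on $\OU$ without losing uniformity in $t$, which is exactly what the stochastic-domination trick in Lemma~\ref{lem:goodBounds} is engineered to deliver.
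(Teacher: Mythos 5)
Your proposal is essentially correct but it takes a genuinely more self-contained route than the paper. The paper's proof is largely a citation-driven argument: it invokes \cite{WenhaoNS} for global $L^2$ well-posedness and the existence of an invariant measure $\rho$ on $L^2$ with moments of all orders, and \cite{Franco} for the continuity of solutions in $H^{\alpha-\kappa}$; the only new estimate is the short energy computation on the stationary solution, which is integrated on $[0,1]$ and then averaged against $\rho$ to show $\|u\|_{\CC^{-\kappa}}$ has finite second moment, after which the (sub)criticality of $\CC^{-\kappa}$ gives the $\CC^{\alpha-\kappa}$ regularity of $\rho$ by a bootstrap. You instead reconstruct the $L^2$ theory from scratch by transplanting the $K$-trick and stochastic-domination machinery of Lemma~\ref{lem:goodBounds} to the velocity formulation (rather than the vorticity one, which is what opens up $\alpha>0$ instead of $\alpha>1$), and you build the invariant measure directly via Krylov--Bogoliubov rather than improving the regularity of the measure supplied by \cite{WenhaoNS}. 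Your energy estimate and the $K$-trick argument are correct, and your observation that $\OU\in\CC^{\alpha-2\kappa}\hookrightarrow L^\infty$ is exactly what one needs here.

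The one place you are glossing over something that the paper makes essential is the bootstrap from the $L^2$ moment bound to $\CC^{\alpha-\kappa}$. Since $L^2=H^0$ is \emph{critical} for 2D Navier--Stokes, one cannot run a mild-formulation Picard argument directly off $\sup_t\E\|v(t)\|_{L^2}^{2p}<\infty$: the product estimate loses too much regularity to close. What saves the day, in the paper's proof and implicitly in yours, is that the very same energy inequality you wrote down also provides a time-integrated bound $\int_t^{t+1}\|\nabla v(s)\|^2\,ds$, so that almost surely the solution visits the subcritical space $H^1\hookrightarrow\CC^{-\kappa}$, and one restarts the local theory from such a time. You should state this explicitly as the mechanism underlying your ``second mild-formulation bootstrap''; as written, that step is not justified by the instantaneous $L^2$ bound alone.
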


\begin{proof}
Local well-posedess in $\CC^{\alpha-\kappa}$ and $L^2$ are classical, see \cite{SPDENotes}
and \cite{Franco} respectively. Global well-posedness in $L^2$ and uniform moment bounds were 
obtained in \cite{WenhaoNS}. To obtain global well-posedness in $\CC^{\alpha - \kappa}$, one 
first proceeds exactly as in \cite[p.~415]{Franco} (Step~4) to show that solutions 
are continuous with values in $H^{\alpha-\kappa}$. Once this is established, one uses the fact that
the $H^{\alpha-\kappa}$ norm is subcritical to conclude with a standard bootstrapping argument.

It remains to show that the equation admits an invariant measure on $\CC^{\alpha - \kappa}$.
By the moment bounds of \cite{WenhaoNS}, we know that there exists an invariant measure $\rho$
on $L^2$ with moments of all orders. Let now $u$ be a stationary solution to \eqref{e:SNS}
and write $u = v + \OU$ as before. Testing $v$ against itself and using the energy conservation
of the nonlinearity, we obtain
\begin{equs}
\d_t \|v\|^2 &= - 2 \|\nabla v\|^2 + 2\scal{v, \div (v\otimes \OU + \OU \otimes v + \OU \otimes \OU)}\\
&\le  - 2 \|\nabla v\|^2 + 2 \|\nabla v\|\,\bigl(2\|v\|\|\OU\|_{L^\infty} + \|\OU\|_{L^\infty}^2\bigr)\\
&\le  - \|\nabla v\|^2 + \bigl(2\|v\|\|\OU\|_{L^\infty} + \|\OU\|_{L^\infty}^2\bigr)^2\\
& \le  - \|\nabla v\|^2 + \|u\|^4 + C \|\OU\|_{L^\infty}^4\;,
\end{equs} 
for some fixed constant $C$. Integrating this between times $0$ and $1$ (say), we have
\begin{equs}
\int_0^1 \|u(s)\|_{\CC^{-\kappa}}^2\,ds
&\lesssim 
\int_0^1 \bigl(\|\OU(s)\|_{\CC^{-\kappa}}^2 + \|\nabla v(s)\|^2\bigr)\,ds\\
&\le \|v(0)\|^2 + \int_0^1 \bigl(\|\OU(s)\|_{\CC^{-\kappa}}^2 + \|u(s)\|^4 + C \|\OU(s)\|_{L^\infty}^4\bigr)\,ds \;.
\end{equs}
Integrating against $\rho$ and using the stationarity of $u$ and $\OU$, we find that 
$\|u\|_{\CC^{-\kappa}}$ has finite second moment under $\rho$. Since the $\CC^{-\kappa}$ norm
is subcritical, a simple bootstrapping
argument shows that \eqref{e:SNS} admits local solutions in $\CC^{\alpha-\kappa}$ with
explicit control in terms of $\|u(0)\|_{\CC^{-\kappa}}$ and $\sup_{t\in [0,1]}\|\OU(s)\|_{\CC^{\alpha-\kappa}}$. It follows immediately that the $\CC^{\alpha-\kappa}$ norm of $u$ is almost 
surely finite under $\rho$ as claimed.
\end{proof}

\begin{remark}\label{rem:SM}
The driving noise $\xi := \xi_\alpha$ satisfies the ``strong Markov'' property in the following way.
Given $\tau \in \R$, write $S_\tau$ for the shift operator acting on test functions by
\begin{equ}
\bigl(S_\tau \phi\bigr)(t,x) = \phi(t-\tau,x)\;.
\end{equ}
Write furthermore $\CF$ for the filtration given by 
\begin{equ}
\CF_t = \sigma\{\xi_\alpha(\phi)\,:\, \supp \phi \subset (-\infty,t]\times \T^2\}\;,
\end{equ}
and consider a stopping time $\tau$ with respect to this filtration. 
Then, for any finite collection $\{\phi_i\}_{i=1}^M$
of test functions with $\supp \phi_i \subset \R_+\times \T^2$ and for any bounded continuous
function $F\colon \R^M \to \R$, one has
\begin{equ}
\E \bigl(F\bigl(\xi(S_\tau\phi_1),\ldots,\xi(S_\tau\phi_M)\bigr)\,\big|\, \CF_\tau\bigr) = \E F\bigl(\xi(\phi_1),\ldots,\xi(\phi_M)\bigr)\;,
\end{equ}
almost surely.
Since the solution to \eqref{e:SNS} is well-posed, adapted, and only depends on such random variables, 
it follows in a straightforward way that the solutions to \eqref{e:SNS} actually satisfy the strong Markov property.
\end{remark}

\subsection{Linearised equation}

We write the linearised SPDE in abstract form as
\begin{equ}
\d_t J = -|\nabla|^{2\gamma}J + DF(u)J\;,
\end{equ}
where we assume that $u$ takes values in $\CC^{\beta}$ (which is the case for $\beta = \alpha-\kappa$) 
and  that $F$ is uniformly smooth as a map $\CC^{\beta} \to \CC^{\beta-1}$.

\begin{lemma}\label{lem:boundsLinear}
For $T>0$, $\beta\in\R$ and a continuous map $A:[0,T]\rightarrow \B(\CC^\beta ; \CC^{\beta
-1})$, let $(J_{s,t})_{0\leq s\leq t \leq T}$ be the linear propagator of 
\begin{equ}
\d_t J = -|\nabla|^{2\gamma}J + A_tJ\;.
\end{equ} Then for any $\theta\in[0,1)$ and $\delta$ such that
\begin{equ}[e:condexponents]
    \beta-1+2\gamma > \delta+2\gamma\theta> \beta\;,
\end{equ} 
we have
\begin{equ}[e:JstBound]
\|J_{s,t} v\|_{\delta + 2\gamma\theta}\lesssim C_{T,R} |t-s|^{-\theta} \|v\|_{\delta}\;,
\end{equ}
where $R=\sup_{t\in[0,T]} \|A_t\|_{\CC^{\beta} \rightarrow \CC^{\beta-1}}$, and $C_{T,R}\leq \CC^{TR^a}$ for some $C,a>1$.
\end{lemma}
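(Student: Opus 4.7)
The plan is to work from the mild (Duhamel) formulation
\begin{equ}
J_{s,t} v = P^{(\gamma)}_{t-s} v + \int_s^t P^{(\gamma)}_{t-r} A_r J_{s,r} v \, dr,
\end{equ}
combining the semigroup smoothing \eqref{e:semigroupbounds} with the operator-norm hypothesis $\|A_r\|_{\CC^{\beta}\to\CC^{\beta-1}} \le R$. Setting $\nu \eqdef \delta + 2\gamma\theta - \beta$, the two inequalities in \eqref{e:condexponents} give respectively $\nu > 0$ and $(\nu+1)/(2\gamma) < 1$. Using the elementary embedding $\|\cdot\|_{\beta} \le \|\cdot\|_{\delta+2\gamma\theta}$ and applying \eqref{e:semigroupbounds} to each factor of the mild formula, I obtain the integral inequality
\begin{equ}
\|J_{s,t}v\|_{\delta+2\gamma\theta} \le C(t-s)^{-\theta}\|v\|_{\delta} + CR\int_s^t (t-r)^{-\frac{\nu+1}{2\gamma}} \|J_{s,r}v\|_{\delta+2\gamma\theta}\,dr.
\end{equ}

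Multiplying by $(t-s)^\theta$, taking the supremum of $r \mapsto (r-s)^\theta \|J_{s,r}v\|_{\delta+2\gamma\theta}$ over $r\in (s,t_0]$, and evaluating the Beta integral $\int_s^{t_0}(t_0-r)^{-(\nu+1)/(2\gamma)}(r-s)^{-\theta}dr \lesssim (t_0-s)^{1-(\nu+1)/(2\gamma)-\theta}$, I arrive at a self-bounding inequality whose nonlinear coefficient is $\lesssim R(t_0-s)^{1-(\nu+1)/(2\gamma)}$. Choosing $\tau_0 = (cR)^{-a_0}$ with $a_0 \eqdef 2\gamma/(2\gamma-\nu-1)$ and $c$ small enough renders this coefficient $\le 1/2$, giving the short-time smoothing bound
\begin{equ}[e:shortSmoothing]
\sup_{r\in(s,s+\tau_0]} (r-s)^\theta \|J_{s,r} v\|_{\delta+2\gamma\theta} \le 2C\|v\|_{\delta}.
\end{equ}
The same computation, set up as a Picard iteration in this weighted norm, also constructs the propagator $J_{s,\cdot}$ on $[s,s+\tau_0]$ (which is how I would establish well-posedness of the linearised equation to begin with). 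A parallel calculation for $\|J_{s,t}v\|_\delta$ uses the weaker smoothing $\|P^{(\gamma)}_{t-r}\|_{\CC^{\beta-1}\to\CC^\delta}\lesssim (t-r)^{-(\delta-\beta+1)^+/(2\gamma)}$, whose exponent is strictly less than $1-\theta$ by the upper part of \eqref{e:condexponents}; plugging \eqref{e:shortSmoothing} back in and possibly shrinking $\tau_0$ then yields $\sup_{r\in[s,s+\tau_0]}\|J_{s,r}v\|_{\delta} \le 2\|v\|_{\delta}$.

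To pass to arbitrary $t-s\le T$, I would invoke the cocycle identity $J_{s,t} = J_{s_{N-1},t}\circ\cdots\circ J_{s_0,s_1}$ along a partition $s = s_0 < \cdots < s_N = t$ with $s_i-s_{i-1}\le\tau_0$, so $N \le 1 + T/\tau_0 \lesssim 1 + TR^{a_0}$. Iterating the $\CC^\delta$ bound across the first $N-1$ intervals gives $\|J_{s,s_{N-1}}v\|_{\delta}\le 2^N\|v\|_\delta$, and a single application of \eqref{e:shortSmoothing} on the last piece $[s_{N-1},t]$ produces $\|J_{s,t}v\|_{\delta+2\gamma\theta}\le 2C\tau_0^{-\theta}2^N\|v\|_\delta$. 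In the nontrivial regime $t-s\ge\tau_0$ one has $\tau_0^{-\theta}\le (T/\tau_0)^\theta(t-s)^{-\theta}$, so the polynomial prefactor $(TR^{a_0})^\theta$ gets absorbed into $2^N\le 2^{1+TR^{a_0}}$, yielding $C_{T,R}\le C^{TR^a}$ with $a = a_0$.

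The main delicate point is arranging the short-time fixed point to control \emph{simultaneously} both the weighted $\CC^{\delta+2\gamma\theta}$ norm and the unweighted $\CC^\delta$ norm on the same interval of length $\tau_0$ (this is what pins $a$ down to the larger of the two Beta-exponent reciprocals). Everything else is a routine bootstrap resting on the two integrability facts guaranteed by \eqref{e:condexponents}: both kernels $(t-r)^{-(\nu+1)/(2\gamma)}$ and $(t-r)^{-(\delta-\beta+1)^+/(2\gamma)}$ are integrable and remain so after multiplication by $(r-s)^{-\theta}$.
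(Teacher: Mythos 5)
Your proposal is correct and follows essentially the same route as the paper: mild formulation, self-bounding estimates via the semigroup smoothing and Beta integrals governed precisely by \eqref{e:condexponents}, a short-time contraction on intervals of length $\tau_0\sim R^{-a}$, and iteration to get the exponential prefactor $C^{TR^a}$. The only cosmetic difference is that the paper bundles the unweighted $\CC^\delta$ norm and the weighted $\CC^{\delta+2\gamma\theta}$ norm into a single combined norm for one fixed-point argument, whereas you treat them sequentially, but this is the same idea and the resulting bound is identical.
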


\begin{proof}
Recall that $v_t = J_{s,t} v_s$ solves
\begin{equ}
v_t = P^{(\gamma)}_{t-s} v_s + \int_s^t P^{(\gamma)}_{t-r} A_r\,v_r\,dr\;.
\end{equ}
We now set, for some fixed time $s \ge 0$ and some fixed time interval $T_R>0$
\begin{equ}
\|v\| \eqdef \sup_{|t-s| \le T_R} \bigl(\|v_t\|_{\delta} + (t-s)^\theta\|v_t\|_{\delta + 2\gamma\theta}\bigr)\;.
\end{equ}
We will make use of \eqref{e:semigroupbounds}, that for any $a\in\R$, $b\geq0$ and $\tau>0$
\begin{equ}
    \|P^{(\gamma)}_\tau f\|_{a + b} \lesssim \tau^{-\frac{b}{2\gamma}}\|f\|_{a}\;,
\end{equ}
as well as the fact that for any $a,b>-1$ we have
\begin{equ}
    \int_s^t(t-r)^a(r-s)^b\,dr\propto (t-s)^{1+a+b}\;.
\end{equ} 
As a consequence of \eqref{e:condexponents}, we then obtain
\begin{align*}
    \|v_t\|_{\delta}&\lesssim\|v_s\|_{\delta}+\int_s^t(t-r)^{\frac{\min\left\{0,\beta-1-\delta\right\}}{2\gamma}}\|A_r\,v_r\,\|_{\beta-1}\,dr \\
    &\lesssim\|v_s\|_{\delta}+\int_s^t(t-r)^{\frac{\min\left\{0,\beta-1-\delta\right\}}{2\gamma}}(r-s)^{-\theta}R\|v\|\,dr \\
    &\lesssim\|v_s\|_{\delta}+(t-s)^{(\min\{0,\beta-1-\delta\}+2\gamma-2\gamma\theta)/2\gamma}R\|v\|\;,
\end{align*}
where this exponent is positive. Next we see
\begin{equ}
    (t-s)^{\theta}\|P^{(\gamma)}_{t-s} v_s\|_{\delta +2\gamma\theta} \lesssim \|v_s\|_{\delta}\;.
\end{equ} Lastly we have (for the same conditions on $\delta$)
\begin{align*}
    &(t-s)^\theta\int_s^t \|P^{(\gamma)}_{t-r} A_r\,v_r\,\|_{\delta+2\gamma\theta}\,dr \\&\lesssim (t-s)^\theta\int_s^t (t-r)^{\frac{\beta-1-\delta-2\gamma\theta}{2\gamma}}\|A_r\,v_r\,\|_{\beta -1}\,dr \\
    &\lesssim (t-s)^\theta\int_s^t (t-r)^{\frac{\beta-1-\delta-2\gamma\theta}{2\gamma}}(r-s)^{-\theta}R\|v\|\,dr \\
    &\lesssim(t-s)^{(\beta-1+2\gamma-\delta-2\gamma\theta)/2\gamma}R\|v\|\;,
\end{align*} which is again a positive exponent. And so
\begin{equ}
    \|v\|\lesssim \|v_s\|_\delta + T_R^{(\beta-1+2\gamma-\delta-2\gamma\theta)/2\gamma}R\|v\|\;.
\end{equ}
Since the exponent appearing in this expression is positive, we can choose 
$T_R\propto R^{-a}$ for some $a>0$ sufficiently large so that $\|v\|\lesssim \|v_s\|_\delta$. We can then iterate this to see for $|t-s|\leq T$
\begin{equ}
    (t-s)^{\theta}\|v_t\|_{\delta + 2\gamma\theta}\leq C^{TR^a} \|v_s\|_{\delta}\;,
\end{equ}
for some $C > 1$.
\end{proof}

\begin{corollary}\label{cor:Linear}
    For $T>0$, $\beta\in\R$ and a continuous map $A:[0,T]\rightarrow \B(\CC^\beta ; \CC^{\beta
-1})$, denote by $(J_{s,t}^A)_{0\leq s\leq t \leq T}$ be the linear propagator of 
\begin{equ}
\d_t J = -|\nabla|^{2\gamma}J + A_tJ\;.
\end{equ}
Then for any $\theta\in[0,1)$ and $\delta$ such that
\begin{equ}
    \beta-1+2\gamma > \delta+2\gamma\theta> \beta\;,
\end{equ} 
for $\kappa > 0$ sufficiently small we have
\begin{equ}[e:JstBoundLip]
\|(J_{s,t}^A - J_{s,t}^B) v\|_{\delta + 2\gamma\theta}\lesssim C_{T,R} \sup_{\tau \in [0,T]}\|A_\tau-B_\tau\|_{\CC^{\beta} \rightarrow \CC^{\beta-1}} |t-s|^{1-\theta - \frac1{2\g} - \kappa} \|v\|_{\delta}\;,
\end{equ}
where $R=\sup_{t\in[0,T]} \|A_t\|_{\CC^{\beta} \rightarrow \CC^{\beta-1}} + \|B_t\|_{\CC^{\beta} \rightarrow \CC^{\beta-1}}$.
\end{corollary}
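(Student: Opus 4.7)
The natural approach is a Duhamel / variation-of-constants argument. Setting $h_t = (J^A_{s,t} - J^B_{s,t})v$, one checks that $h$ solves the inhomogeneous linear equation
\begin{equ}
\partial_t h = -|\nabla|^{2\gamma} h + A_t h + (A_t - B_t) J^B_{s,t} v\;, \qquad h_s = 0\;,
\end{equ}
whose mild solution is the standard representation
\begin{equ}
(J^A_{s,t} - J^B_{s,t})v = \int_s^t J^A_{r,t} \, (A_r - B_r)\, J^B_{s,r} v \, dr\;.
\end{equ}
The remainder of the proof then amounts to estimating the integrand with two applications of Lemma~\ref{lem:boundsLinear}, one for each propagator, and performing a Beta-function integral in $r$.

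For the factor $J^B_{s,r} v$, which needs to be controlled in $\CC^\beta$ (the domain on which $A_r - B_r$ is bounded), I would invoke Lemma~\ref{lem:boundsLinear} with exponent $\theta_1 = \max\{0,(\beta-\delta)/(2\gamma)\} + \kappa'$ for arbitrarily small $\kappa'>0$, yielding $\|J^B_{s,r} v\|_{\CC^\beta} \lesssim C_{T,R}(r-s)^{-\theta_1}\|v\|_{\CC^\delta}$; one checks that the resulting exponent lies strictly between $\beta$ and $\beta - 1 + 2\gamma$ using $\gamma>1/2$. Composing with $A_r - B_r$ produces an element of $\CC^{\beta-1}$ with norm at most $\Delta \cdot \|J^B_{s,r} v\|_{\CC^\beta}$, writing $\Delta := \sup_\tau \|A_\tau - B_\tau\|_{\CC^\beta \to \CC^{\beta-1}}$. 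A second application of Lemma~\ref{lem:boundsLinear}, now to the propagator $J^A_{r,t}$ mapping $\CC^{\beta-1}$ into $\CC^{\delta + 2\gamma\theta}$, requires the exponent $\theta_2 = \theta + (\delta-\beta+1)/(2\gamma)$; the two hypotheses $\delta + 2\gamma\theta > \beta$ and $\delta + 2\gamma\theta < \beta -1 + 2\gamma$ of the Corollary translate exactly into $1/(2\gamma) < \theta_2 < 1$, so this is admissible. Multiplying the three bounds and integrating,
\begin{equ}
\|(J^A_{s,t} - J^B_{s,t})v\|_{\delta + 2\gamma\theta} \lesssim C_{T,R}^2 \, \Delta\, \|v\|_{\CC^\delta} \int_s^t (t-r)^{-\theta_2}(r-s)^{-\theta_1}\, dr\;,
\end{equ}
and since both $\theta_1, \theta_2 < 1$, the Beta-function identity gives a factor $|t-s|^{1-\theta_1-\theta_2}$. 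In the generic regime a direct arithmetic yields $1 - \theta_1 - \theta_2 = 1 - \theta - \tfrac{1}{2\gamma} - \kappa'$, which is the target exponent upon identifying $\kappa = \kappa'$.

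I expect the main obstacle to be purely bookkeeping: carefully matching the domain/codomain regularities to the hypothesis $\beta - 1 + 2\gamma > (\text{source})+2\gamma(\text{exponent}) > \beta$ of Lemma~\ref{lem:boundsLinear} at both ends, and verifying integrability of the joint singularity in $r$. The condition $\gamma > 1/2$ enters precisely in ensuring $\theta_2 < 1$ (so that the $r \to t$ singularity is integrable), and the strict inequalities in the hypothesis provide the small slack that is absorbed into $\kappa$. Beyond this accounting there are no analytical surprises: no fixed-point argument, no interpolation, and no refined semigroup estimates beyond \eqref{e:semigroupbounds} are needed.
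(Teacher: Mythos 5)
Your proof is correct and follows essentially the same route as the paper: the same Duhamel identity $(J^A_{s,t}-J^B_{s,t})v=\int_s^t J^A_{r,t}(A_r-B_r)J^B_{s,r}v\,dr$, two applications of Lemma~\ref{lem:boundsLinear} (one mapping $\CC^\delta\to\CC^\beta$, the other $\CC^{\beta-1}\to\CC^{\delta+2\gamma\theta}$), and a Beta-function integral, with the small $\kappa$-slack arising from the need for strict inequalities in \eqref{e:condexponents}. The only cosmetic difference is your $\max\{0,(\beta-\delta)/(2\gamma)\}$, which just guards the $\delta\geq\beta$ case where the paper's exponent choice would yield a (harmless, since $T$ is bounded) slightly stronger power of $|t-s|$.
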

\begin{proof}
    Let $\wt J := J^A - J^B.$ Then $\wt J$ solves the equation
    \begin{align*}
        \partial_t \wt J = - |\nabla|^{2\g} \wt J + A_t J^A - B_t J^B = - |\nabla|^{2\g} \wt J + A_t \wt J + (A_t - B_t) J^B\;. 
    \end{align*}
    Therefore, by variation of constants, we have that 
    \begin{equ}
        \wt J_{s,t} = \int_s^t J^A_{\tau,t} (A_\tau - B_\tau) J^B_{s,\tau} \,d \tau.
    \end{equ}
    Taking $\kappa>0$ small enough and using that $\gamma>\frac12$, \eqref{e:JstBound} yields
    \begin{align*}
        &\| \wt J_{s,t}\|_{\CC^\dl \to \CC^{\dl + 2 \g \ta }} \\
        & \le \int_s^t \|J^A_{\tau,t}\|_{\CC^{\beta -1} \to \CC^{\dl + 2\g\ta}} \|A_\tau - B_\tau\|_{\CC^{\beta} \to \CC^{\beta -1}} \|J^B_{s,\tau}\|_{\CC^\dl \to \CC^\beta} \,d\tau \\
        &\les \sup_{\tau \in [0,T]}\|A_\tau-B_\tau\|_{\CC^{\beta} \rightarrow \CC^{\beta-1}}  \int_s^t (t-\tau)^{-\frac{\dl + 2\g \ta - \be +1}{2\g} }(\tau-s)^{-\frac{\beta - \dl}{2 \g} - \kappa} \,d \tau \\
        &\les \sup_{\tau \in [0,T]}\|A_\tau-B_\tau\|_{\CC^{\beta} \rightarrow \CC^{\beta-1}} |t-s|^{1-\theta - \frac1{2\g} - \kappa}\;,
    \end{align*}
    thus concluding the proof.
\end{proof}

\subsection{Strong Feller property}

We now show that the strong Feller property holds for a slightly larger class of
equations which also includes \eqref{e:equv}.
For this, we set
\begin{equ}[e:SNSgen2]
        \partial_t u+((|\nabla|^{\beta-\alpha} u)\cdot\nabla)u=-|\nabla|^{2\gamma} u-\nabla p+\boldsymbol{\xi}_{\beta + 1-\gamma}, \qquad
        \nabla\cdot u=0\;.
\end{equ}
We also re-write the modified equation \eqref{e:equv} in velocity formulation as
\begin{equ}[e:equvvelocity]
    \partial_t v+|\nabla|^{-\alpha}\bigl((v\cdot\nabla)|\nabla|^{\alpha}v\bigr)=-|\nabla|^{2\gamma} v-\nabla p+\boldsymbol{\xi}_{\alpha + 1-\gamma}, \qquad
        \nabla\cdot v=0\;.
\end{equ}
By setting $u=|\nabla|^\alpha v$ we see that \eqref{e:equvvelocity} corresponds to the case $\beta = 0$ while \eqref{e:SNSgen} 
corresponds to $\beta = \alpha$.

\begin{proposition}\label{prop:SF}
Let $\gamma \in (2/3,1]$, let $\alpha > 0$, let $\beta \in [0,\alpha]$, and let $\kappa \in (0,\beta)$. 
Then, the Markov semigroup on $\CC^{\beta-\kappa}$ generated
by the maximal solutions to \eqref{e:SNSgen2} satisfies the strong Feller property.
\end{proposition}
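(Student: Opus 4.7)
My plan is to apply the Bismut--Elworthy--Li formula together with the Jacobian regularisation estimates of Lemma~\ref{lem:boundsLinear}. Since the noise in \eqref{e:SNSgen2} has covariance operator $Q\approx|\nabla|^{-2(\beta+1-\gamma)}$, its Cameron--Martin space is $H^{\beta+1-\gamma}$. For $\phi$ smooth bounded on $\CC^{\beta-\kappa}$ and $h\in\CC^{\beta-\kappa}$, the Bismut--Elworthy--Li representation gives
\begin{equ}
\langle DP_T\phi(u_0),h\rangle = \frac{1}{T}\,\E\Bigl[\phi(u_T)\int_0^T \bigl\langle |\nabla|^{\beta+1-\gamma}J_{0,t}h,\,dW_t\bigr\rangle\Bigr],
\end{equ}
so by It\^o's isometry it is enough to establish $\int_0^T\|J_{0,t}h\|_{H^{\beta+1-\gamma}}^2\,dt<\infty$ almost surely, with quantitative control in $\|h\|_{\CC^{\beta-\kappa}}$. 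Local Lipschitz continuity of $P_T\phi$, and hence the strong Feller property on bounded measurable $\phi$, will then follow by the standard density argument.

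To apply Lemma~\ref{lem:boundsLinear}, I first check that $F(u)=-P\div(|\nabla|^{\beta-\alpha}u\otimes u)$ is uniformly smooth from $\CC^{\beta-\kappa}$ into $\CC^{\beta-1-\kappa}$: this follows from $\alpha\ge\beta\ge 0$ (so $|\nabla|^{\beta-\alpha}$ does not lose regularity) together with $\alpha>0$ (which makes the resulting paraproduct well-defined). The Jacobian $J$ thus satisfies the hypothesis of the lemma with $A_t=DF(u_t)$ and with parameter $\beta-\kappa$ in place of $\beta$, the operator norm of $A_t$ being controlled by a power of $\sup_{t\le T}\|u_t\|_{\CC^{\beta-\kappa}}$.

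The core estimate is then a direct application of Lemma~\ref{lem:boundsLinear}. Setting $\delta=\beta-\kappa$ and picking $\theta$ so that $\delta+2\gamma\theta=\beta+1-\gamma+\kappa'$ for some small $\kappa'>0$ (the loss accommodating the embedding $\CC^{s}\hookrightarrow H^{s-\kappa'}$ on the torus), one gets $2\gamma\theta=1-\gamma+\kappa+\kappa'$. The admissibility condition $\delta+2\gamma\theta<\beta-\kappa-1+2\gamma$ of the lemma then reads $3\gamma>2+O(\kappa+\kappa')$, which for $\kappa,\kappa'$ small is exactly the hypothesis $\gamma>2/3$; the weaker condition $2\theta<1$ required for time integrability reduces to $\gamma>1/2$ and is automatic. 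Consequently
\begin{equ}
\|J_{0,t}h\|_{H^{\beta+1-\gamma}}\lesssim C_{T,R(\omega)}\,t^{-\theta}\|h\|_{\CC^{\beta-\kappa}}\;,
\end{equ}
with $\theta<1/2$, whence the time integral above is finite.

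The main obstacle is the $\omega$-dependence of the constant $C_{T,R(\omega)}\lesssim\exp(cR(\omega)^a)$ from Lemma~\ref{lem:boundsLinear}, coupled with the fact that the classical BEL derivation is naturally set in a Hilbert state space whereas $\CC^{\beta-\kappa}$ is not Hilbertian. I would address both issues simultaneously by first introducing a smooth cut-off of $\|u\|_{\CC^{\beta-\kappa}}$ at level $R$, turning the truncated drift into a globally Lipschitz, uniformly smooth map; the BEL gradient bound for the resulting semigroup $P_T^R$ can then be derived on a Hilbertian enlargement such as $H^{\beta-\kappa-\varepsilon}$ on which the noise is non-degenerate, and transferred back to $\CC^{\beta-\kappa}$. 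Passage to $R\to\infty$ is carried out by restricting to the event $\{\sup_{t\le T}\|u_t\|_{\CC^{\beta-\kappa}}\le R\}$, whose probability tends to $1$ thanks to the a priori bounds from Section~\ref{sec:bounds}, recovering the strong Feller property for the original semigroup.
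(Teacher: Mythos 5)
Your plan is essentially the one the paper uses, differing mainly in presentation: both arguments are Bismut--Elworthy--Li arguments whose crucial quantitative input is the Jacobian regularisation estimate of Lemma~\ref{lem:boundsLinear}, and the constraint $\gamma>2/3$ appears in both for the same reason (the Jacobian gains at most $2\gamma-1$ derivatives, which must exceed the $1-\gamma$ derivatives needed to land in the Cameron--Martin space $H^{\beta+1-\gamma}$). The paper does not re-derive the BEL formula by hand but instead invokes the abstract criterion \cite[Thm~3.2]{SFGeneral}, which already packages the localisation, the work on an auxiliary Hilbert space, the verification that the Malliavin derivative $\int_s^t J_{r,t}h_r\,dr$ can be inverted via a suitable $A_t^{(s)}$ supported away from $r=0$, and the passage to the limit. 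You sketch these same ingredients but leave them at the level of a plan. Your exponent bookkeeping with $\delta=\beta-\kappa$, $\delta+2\gamma\theta=\beta+1-\gamma+\kappa'$ and the condition $3\gamma>2$ is correct, and the observation that $F$ maps $\CC^{\beta-\kappa}\to\CC^{\beta-1-\kappa}$ smoothly because $\beta-\alpha\le 0$ is also right.

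One genuine imprecision worth flagging: in your final paragraph you claim that $\P\bigl(\sup_{t\le T}\|u_t\|_{\CC^{\beta-\kappa}}\le R\bigr)\to 1$ ``thanks to the a priori bounds from Section~\ref{sec:bounds}.'' Those bounds only guarantee global well-posedness for $\beta>2-\gamma$ (enstrophy regime) or $\beta=0$ (explicit invariant measure); for intermediate $\beta\in(0,2-\gamma]$ the probability of non-explosion may be strictly less than one, and the limit of your events is the survival event $\{T^*>T\}$, not the full space. This is precisely the subtlety raised in the Remark following the proposition: the statement is about the semigroup generated by \emph{maximal} solutions, i.e.\ the killed semigroup, and the $R\to\infty$ limit must be interpreted accordingly. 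Your argument does go through in this formulation, but as written it implicitly assumes global solutions. Modulo this reinterpretation of what ``original semigroup'' means, the proposal is sound.
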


\begin{proof}
We would like to apply \cite[Thm~3.2]{SFGeneral}, so we need to verify Assumptions~1--5
in Section~2 of that article. In our setting, one has $H_0 = H^{\beta+1-\gamma}$ and we 
take for $\boldsymbol{\xi}$ the identity map with $\MM = \CC^{-1/2-\kappa/10}(\R,\CC^{\beta-\gamma-\kappa/10})$.
This is chosen in such a way that the space-time convolution with $P_t^{(\gamma)}$
is continuous from $\MM$ into $\CC(\R, \CC^{\beta-\kappa})$.

The solution map $\Phi_{s,t}(u_0,\xi)$ is then simply given by the local solution 
to \eqref{e:SNSgen2} on $U = \CC^{\beta-\kappa}$, which is well-defined by a 
standard Picard iteration. It is furthermore differentiable with respect to the initial 
condition $u_0$, so that Assumption~1 is satisfied. Assumption~2 is satisfied by
taking for $r$ the map $r_t(u_0,\xi) = \sup_{s \in [0,t]} \|\Phi_{0,s}(u_0,\xi)\|_{\beta-\kappa}$
with the understanding that $r_t(u_0,\xi) = +\infty$ if the solution has exploded before time $t$.
The regularity assumptions on $r$ can be verified in a straightforward way from the local
well-posedness of the solutions. Assumption~3 is trivially satisfied if we choose
$E = L^p([0,1],\CC^{\beta+1-\gamma})$ for any $p \ge 2$, with the action $\tau$ simply given 
by $\tau(\xi,h) = \xi+h$.
Assumption~4 just states that $h \mapsto \Phi_{s,t}(u_0,\xi+h)$ is Fréchet differentiable
in $h$ at $h=0$, provided that $u_0$ and $\xi$ are such that the solution hasn't exploded
yet by time $t$. This is straightforward to verify, with the derivative given by
\begin{equ}[e:MalDer]
D_h \Phi_{s,t}(u_s,\xi+\cdot) = \int_s^t J_{r,t} h_r\,dr\;.
\end{equ}
As a consequence of Lemma~\ref{lem:boundsLinear}, $J_{r,t}$ does indeed map $\CC^{\beta+1-\gamma}$
into $U = \CC^{\beta-\kappa}$ with bounds that are uniform over $s,t \in [0,1]$.

It remains to verify Assumption~5. Given $u_0 \in U$ and $\xi \in \MM$, we define
$A_t^{(s)}(u_0,\xi) \in L(U,E_s)$ by setting
\begin{equ}[e:defAt]
\bigl(A_t^{(s)}(u_0,\xi)v\bigr)(r) = \f2t \one( r\in [t/4,3t/4] \cap [0,s]) J_{0,r}v
\end{equ}
Since, for every $\kappa > 0$, $J_{0,t} v$ belongs to $\CC^{\beta + 2\gamma-1-\kappa}$, 
this does indeed belong to $E$, as long
as $\gamma > 2/3$. Furthermore, inserting the right-hand side of \eqref{e:defAt}
into \eqref{e:MalDer}, we have indeed
\begin{equ}
 \int_s^t J_{r,t} \bigl(A_t^{(t)}(u_0,\xi)v\bigr)(r)\,dr = J_{0,t}v\;,
\end{equ}
as required.
\end{proof}

\begin{remark}
There is a subtle point here that is worth remarking on. For small but non-zero $\beta$, we cannot rule out 
that \eqref{e:SNSgen2} admits global solutions that are unique in some natural way but that
are such that the $\CC^{\beta-\kappa}$-norm of the solution blows up at some finite time. In such a 
situation, one would have two Markov semigroups: the one where solutions are continued in their natural
way beyond their blow-up time and the one where, at blow-up time, the process gets killed (or 
equivalently moved to
a cemetery state which it never leaves). Proposition~\ref{prop:SF} only makes a claim about the latter 
semigroup! When $\beta > 2-\gamma$ or $\beta = 0$ we know that solutions are global (by enstrophy bounds in the
first case and having an explicit invariant measure in the second case), which is why 
Proposition~\ref{prop:SF} is mainly useful in that setting.
\end{remark}

A corollary of Proposition~\ref{prop:SF} is the following result which is 
a crucial ingredient in our approach.

\begin{proposition}\label{prop:equivalence}
Let $\gamma = 1$ and $\alpha > 0$ or $\gamma \in (\f23,1)$ and $\alpha> 2-\gamma$. 
Then \eqref{e:SNSgen} admits a unique invariant measure $\rho$. Furthermore, 
for every $u_0 \in \CC^{\alpha-\kappa}$ and every $t>0$, the transition probabilities 
$P_t(u_0, \cdot)$ are equivalent to $\rho$. 
\end{proposition}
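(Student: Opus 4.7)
The proposition is essentially a packaging of earlier ingredients. Existence of at least one invariant measure $\rho$ on $\CC^{\alpha-\kappa}$ is supplied by Lemma~\ref{lem:apriori} when $\gamma=1$ and by Lemma~\ref{lem:goodBounds} when $\gamma\in(\tfrac{2}{3},1)$ with $\alpha>2-\gamma$; in both regimes $\rho$ integrates all polynomial moments of $\|u\|_{\CC^{\alpha-\kappa}}$. Proposition~\ref{prop:SF} applied with $\beta=\alpha$ (so that $U=\CC^{\alpha-\kappa}$ matches the space in which $\rho$ lives) yields the strong Feller property of the Markov semigroup $(P_t)_{t\ge 0}$ generated by \eqref{e:SNSgen}. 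Given a third ingredient---topological irreducibility on $\CC^{\alpha-\kappa}$---the classical Doob--Khasminskii dichotomy (see e.g.\ \cite[Prop.~4.1.1, Thm.~4.2.1]{DPZ2}) simultaneously delivers uniqueness of $\rho$ and the equivalence $P_t(u_0,\cdot)\sim\rho$ for every $u_0\in\CC^{\alpha-\kappa}$ and every $t>0$.

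For topological irreducibility, the plan is a standard Cameron--Martin / control argument. The Cameron--Martin space of $\sqrt 2\,\boldsymbol{\xi}_{\alpha+1-\gamma}$ on $[0,t]$ is $L^2([0,t],H^{\alpha+1-\gamma})$, so by the Cameron--Martin theorem combined with continuity of the Itô map in the noise, it suffices, for any $u_0\in\CC^{\alpha-\kappa}$, any $u_1$ in a dense subset of $\CC^{\alpha-\kappa}$, and any $t>0$, to exhibit a path $s\mapsto\tilde u_s$ with $\tilde u_0=u_0$, $\tilde u_t=u_1$, whose residual drift
\begin{equation*}
h_s\eqdef\partial_s\tilde u_s+|\nabla|^{2\gamma}\tilde u_s+P\div(\tilde u_s\otimes\tilde u_s)
\end{equation*}
lies in $L^2([0,t],H^{\alpha+1-\gamma})$. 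I would take $\tilde u_s=P^{(\gamma)}_s u_0$ on $[0,t/2]$, so that only the nonlinearity contributes to $h$ there; the parabolic smoothing bound \eqref{e:semigroupbounds} controls $\|\tilde u_s\|_{\CC^{\alpha+1-\gamma+\delta}}$ by $s^{-(1-\gamma+\delta+\kappa)/(2\gamma)}$, an exponent strictly less than $1/2$ as soon as $\gamma>1/2$ and $\delta,\kappa$ are small, so the product estimate for $P\div(\tilde u\otimes\tilde u)$ places $h\restr_{[0,t/2]}$ comfortably in $L^2_tH^{\alpha+1-\gamma}$. On $[t/2,t]$ one smoothly interpolates in an $H^\infty$-valued sense between $P^{(\gamma)}_{t/2}u_0$ and a smooth approximation of $u_1$: the path is then spatially smooth, so $h$ is trivially smooth in space and square-integrable in time. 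Shifting the noise by $-(\sqrt 2)^{-1}h$ via Girsanov produces a positive-probability event on which the solution is arbitrarily close to $u_1$ at time $t$, giving $P_t(u_0,O)>0$ for any open $O$ containing $u_1$ and hence, by density, for any nonempty open $O$.

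The main obstacle is purely technical: verifying the regularity of the residual on $[0,t/2]$ where $u_0$ is only in $\CC^{\alpha-\kappa}$. This hinges precisely on the parabolic smoothing of $P^{(\gamma)}$ dominating the loss from the quadratic nonlinearity, which is the content of \eqref{e:semigroupbounds} together with the product bound $\|P\div(f\otimes g)\|_{\CC^{s-1}}\lesssim\|f\|_{\CC^s}\|g\|_{\CC^s}$ recorded in the notation subsection, and works as long as $\gamma>1/2$ (already assumed throughout). All remaining pieces of the Doob--Khasminskii machine are already either established in the preceding sections (strong Feller via Proposition~\ref{prop:SF}, existence of $\rho$ via Lemmas~\ref{lem:goodBounds}--\ref{lem:apriori}) or are classical.
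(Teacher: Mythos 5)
Your overall skeleton matches the paper exactly: existence from Lemmas~\ref{lem:goodBounds} and~\ref{lem:apriori}, strong Feller from Proposition~\ref{prop:SF}, and then Doob--Khasminskii once topological irreducibility is established. The paper's proof is essentially the first paragraph of yours plus one sentence asserting irreducibility from ``continuity and surjectivity of the solution map combined with full support of the noise law''. The genuine discrepancy is in how irreducibility is obtained, and there your control argument has a real gap.

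You claim that bounding $\|\tilde u_s\|_{\CC^{\alpha+1-\gamma+\delta}}$ by $s^{-(1-\gamma+\delta+\kappa)/(2\gamma)}$ ``places $h\restr_{[0,t/2]}$ comfortably in $L^2_t H^{\alpha+1-\gamma}$''. But the product estimate costs a derivative: with $\tilde u_s \in \CC^{\alpha+1-\gamma+\delta}$ one gets $h_s = P\div(\tilde u_s\otimes\tilde u_s) \in \CC^{\alpha-\gamma+\delta}$, which is a full derivative short of the Cameron--Martin regularity $H^{\alpha+1-\gamma}$. Repairing the count requires $\tilde u_s \in \CC^{\alpha+2-\gamma+\delta}$, i.e.\ smoothing by $2-\gamma+\kappa+\delta$ derivatives, giving a singularity $s^{-(2-\gamma+\kappa+\delta)/\gamma}$ after squaring; integrability at $s=0$ then forces $\gamma>1$. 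So no choice of exponents makes the Cameron--Martin control land in the Cameron--Martin space — and this is not a cosmetic issue: the fact that the naive drift fails to be in Cameron--Martin is precisely the obstruction that motivates the time-shifted Girsanov method of Section~\ref{sec:proof}. If a direct Cameron--Martin control argument closed in the regime of Theorem~\ref{theo:main}, one should be suspicious.

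The paper's route avoids the Cameron--Martin space entirely. It argues via full support of the noise law on the path space $\MM$ (the space on which the solution map is continuous, cf.\ the choice $\MM = \CC^{-1/2-\kappa/10}(\R,\CC^{\alpha-\gamma-\kappa/10})$ in the proof of Proposition~\ref{prop:SF}), together with continuity of the solution map with respect to the $\MM$-topology and denseness of the reachable set: the support of $\Law(u(t))$ is the closure of $\{\Phi_t(u_0,\xi): \xi\in\MM\}$, and exhibiting a control in $\MM$ is much cheaper than exhibiting one in $L^2_t H^{\alpha+1-\gamma}$ since the spatial regularity threshold drops by more than one derivative. With that lower target, the path $\tilde u_s = P^{(\gamma)}_s u_0$ followed by smooth interpolation works in the full parameter range of the proposition. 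So the statement you needed to establish is correct, and the high-level scheme is right, but the mechanism you used for irreducibility does not close as written and should be replaced by the full-support / reachability argument.
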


\begin{proof}
Since we already know of the existence of $\rho$ by Lemmas~\ref{lem:goodBounds} and~\ref{lem:apriori} 
and that the semigroup
generated by the solutions to \eqref{e:SNS} satisfies the strong Feller property
by Proposition~\ref{prop:SF}, both statements
are a consequence of Khasminskii's theorem \cite[Prop.~4.1.1]{DPZ2} if we can show 
that the semigroup is topologically irreducible.

This in turn is a simple consequence of the continuity and surjectivity of the solution 
map in vorticity form, combined with the fact that the law of the noise $\xi_{\alpha-\gamma}$ has 
full support.
\end{proof}

We conclude this section with a priori bounds on the modified equation \eqref{e:equvvelocity}.

\begin{proposition} \label{prop:asGWPv}
Let $\gamma = 1$ and $\alpha > 0$, or $\gamma \in (\f23, 1)$ and $\alpha > 2 - \gamma$. Then the SPDE \eqref{e:equvvelocity} is globally well posed in $\CC^{\al-\kappa} $ for every $\kappa > 0$ small enough. 
Moreover, the measure $\mu_\al$ is invariant for the semigroup generated by this equation.
\end{proposition}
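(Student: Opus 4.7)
The plan is to leverage the change of variables $u=|\nabla|^\alpha v$, under which \eqref{e:equvvelocity} becomes exactly \eqref{e:SNSgen2} with $\beta=0$, the Gaussian measure $\mu_\alpha$ pulls back to the white noise measure $\mu_0$, and the nonlinearity reads $((|\nabla|^{-\alpha}u)\cdot\nabla)u$, which is transport by the divergence-free field $|\nabla|^{-\alpha}u$. In particular, testing against $u$ gives the $L^2$-conservation
\begin{equ}
\scal{u,((|\nabla|^{-\alpha}u)\cdot\nabla)u}=\tfrac12\int(|\nabla|^{-\alpha}u)\cdot\nabla|u|^2\,dx=0\;,
\end{equ}
and divergence-freeness of the drift in phase space follows similarly. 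These are the two structural properties that make the Da Prato--Debussche argument \cite{DPD} for invariance of white noise for 2D Navier--Stokes applicable here.

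First, I would set up local well-posedness of \eqref{e:equvvelocity} in $\CC^{\alpha-\kappa}$ via a Da Prato--Debussche splitting $v=\psi+V$, where $\psi$ is the stationary OU process driven by $\boldsymbol\xi_{\alpha+1-\gamma}$, so that $\psi\in\CC^{\alpha-\kappa}$ a.s. The nonlinearity $|\nabla|^{-\alpha}((v\cdot\nabla)|\nabla|^\alpha v)$ is bilinear and, after expansion, each term is bounded by the paraproduct estimate $\|P\div(f\otimes g)\|_{\CC^{s-1}}\lesssim\|f\|_{\CC^s}\|g\|_{\CC^s}$ applied to suitable fractional derivatives of $v$ and $\psi$; these estimates give a contraction over small time in $\CC^{\alpha-\kappa}$, following exactly the scheme cited in the proof of Proposition~\ref{prop:apriori}.

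Second, I would prove invariance of $\mu_\alpha$ by a Galerkin approximation in the $u$-variable. Let $\Pi_N$ denote the Leray-projected truncation onto Fourier modes of size $\le N$. The truncated equation
\begin{equ}
\d u_N=-|\nabla|^{2\gamma}u_N\,dt-\Pi_N P\bigl((|\nabla|^{-\alpha}u_N\cdot\nabla)u_N\bigr)\,dt+\sqrt2\,\Pi_N\,dW_{1-\gamma}
\end{equ}
is a finite-dimensional SDE whose drift is the sum of an OU part with white-noise invariant measure and a nonlinear term which preserves both the $L^2$ norm and the divergence of the vector field in phase space. The standard Liouville/Fokker--Planck computation then shows that $(\Pi_N)_*\mu_0$ is invariant for $u_N$. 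Combining the $L^2$-conservation with the bounds on $\OU$ of Lemma~\ref{lem:OU} provides uniform-in-$N$ moment bounds, guaranteeing tightness, and the standard compactness argument allows one to pass to the limit and identify $\mu_0$ as invariant for the full $u$-equation, hence $\mu_\alpha$ is invariant for \eqref{e:equvvelocity}.

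Third, global well-posedness follows by combining the invariance with a priori bounds. For $\mu_\alpha$-a.e.\ initial datum the stationary solution exists globally, and the a priori estimate on $\|v\|_{H^1}$ (in the regime $\alpha>2-\gamma$, running the enstrophy bound of Lemma~\ref{lem:apriori0} in the $v$-variable, handling the commutator between $|\nabla|^{-\alpha}$ and the nonlinearity via the fractional Leibniz rule and the cancellation $\scal{|\nabla|v,((v\cdot\nabla)|\nabla|^\alpha v)}$ expressed in terms of vorticity) together with the Krylov--Bogolyubov construction yields moment bounds under the invariant measure; a bootstrap as in Lemma~\ref{lem:goodBounds} then promotes these to $\CC^{\alpha-\kappa}$ bounds. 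For arbitrary initial data in $\CC^{\alpha-\kappa}$, equivalence of $\mu_\alpha$ with arbitrary transition probabilities (the analogue of Proposition~\ref{prop:equivalence} applied to \eqref{e:equvvelocity}, using Proposition~\ref{prop:SF} with $\beta=0$) gives globality, since the invariant measure is supported where solutions are global. The main obstacle is the Galerkin limit in the low-regularity setting: one must ensure that the nonlinear term is stable enough to pass to the limit under the white-noise-like invariant measure, which in the hypoviscous regime $\gamma<1$ requires exploiting the $\alpha$ derivatives of smoothing to close the relevant paraproduct estimates on the test functionals.
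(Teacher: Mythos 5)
Your core approach matches the paper's: observe that the substitution $u=|\nabla|^\alpha v$ turns \eqref{e:equvvelocity} into the white-noise-invariant equation \eqref{e:hatv} (this is precisely the structure the paper sets up in Section~\ref{sec:Idea}), run the Da Prato--Debussche/Galerkin argument to produce a global stationary solution with time-marginals $\mu_\alpha$ --- which the paper notes is actually \emph{easier} than \cite{DPD} since in $v$-variables the nonlinearity is classically well defined and needs no renormalisation --- and then extend globality to every initial condition in $\CC^{\alpha-\kappa}$ via the strong Feller property of Proposition~\ref{prop:SF} (applied with $\beta=0$). That last step, which you state correctly at the end of your third paragraph, is exactly what the paper does.

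However, the intermediate portion of your third paragraph does not hold up, although it is also not needed. The claimed cancellation $\scal{|\nabla| v,(v\cdot\nabla)|\nabla|^\alpha v}=0$ is false for $\alpha\ne 1$: the only inner product annihilated by the transport structure here is $\scal{|\nabla|^\alpha v,(v\cdot\nabla)|\nabla|^\alpha v}$, i.e.\ the $H^\alpha$-norm of $v$ (equivalently the $L^2$-norm of $u$), not the enstrophy $\|v\|_{H^1}$. Since the conserved $H^\alpha$-norm of $v$ is $\mu_\alpha$-a.s.\ infinite, no pathwise a priori estimate à la Lemma~\ref{lem:apriori0} is available for \eqref{e:equvvelocity}, and the Krylov--Bogolyubov/bootstrap sentence can be deleted. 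Fortunately the argument closes without it: the stationary DPD construction already gives global solutions for $\mu_\alpha$-a.e.\ initial data, and strong Feller plus irreducibility then yields equivalence of $P_t(x_0,\cdot)$ with $\mu_\alpha$ for \emph{every} $x_0$, which rules out finite-time blow-up from any initial condition. A similar imprecision appears in your step two --- the uniform-in-$N$ moment bounds come from the explicit invariance of the projected Gaussian for the Galerkin system, not from a pathwise $L^2$-conservation, which is ineffective since $u$ is not in $L^2$ --- but the intent is clearly the standard \cite{DPD} argument and the paper simply defers to it.
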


\begin{proof}
The construction of global stationary solution with fixed time marginals given by $\mu_\alpha$
is virtually identical to that given in \cite{DPD}, so we do not reproduce it here. The situation
given here is in fact easier since the equation is more regular and does not require any renormalisation.

This however only shows that one has global solutions for $\mu_\alpha$-every initial condition. 
The extension to every initial condition $\CC^{\al-\kappa}$ then follows from the strong Feller
property established in Proposition~\ref{prop:SF}.
\end{proof}

\section{Proof of the main theorems}
\label{sec:proof}

\subsection{An abstract equivalence criterion}

The goal of this subsection is to prove that under appropriate assumptions, the laws of $X(t), Y(t)$ are equivalent for every $t \ge 0$, where $X,Y$ are the solutions of the two SPDEs
\minilab{e:bothSPDEs}
\begin{equs}
dX &= -|\nabla|^{2\gamma} X\,dt + F(X)\,dt + |\nabla|^{\gamma-1-\alpha} dW\;, \label{e:abstractX}\\
dY &= -|\nabla|^{2\gamma} Y\,dt + F(Y)\,dt + G(Y)\,dt + |\nabla|^{\gamma-1-\alpha}dW\;, \label{e:abstractY}
\end{equs}
subject to the same initial data. More precisely, we aim to show the following 
\begin{proposition} \label{prop:equiv}
Consider the SPDEs \eqref{e:bothSPDEs}
with the same initial condition $X_0 = Y_0 \in \CC^{\al-\kappa}$, where $W$ is a cylindrical Wiener process on $L^2$. Assume that $\gamma\in(2/3,1]$, and for all $\kappa>0$ sufficiently small, $F$ is uniformly smooth
from $\CC^{\alpha-\kappa}$
to $\CC^{\alpha-1-\kappa}$, while $G$ is uniformly smooth
from $\CC^{\alpha-\kappa}$ to $\CC^{\alpha+1-2\gamma+\kappa}$.
Suppose moreover that both SPDEs admit a probabilistically strong  global solution
in the sense that for every $T>0$, almost-surely there exist $X,Y \in L^\infty([0,T]; \CC^{\alpha-\kappa})$ that solves the respective SPDE in mild formulation.\footnote{We remark that from the assumptions, such solutions must be unique.}
Then, for 
every $t>0$,
the laws of $X(t)$ and $Y(t)$ are equivalent.
\end{proposition}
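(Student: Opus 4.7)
The plan is to construct, for each $t > 0$, an adapted process $\Psi$ almost surely lying in the Cameron--Martin space $L^2([0,t]; H^{\alpha+1-\gamma})$ of the noise $|\nabla|^{\gamma-1-\alpha}dW$, together with the solution $\tilde X$ of the shifted SPDE
\[
d\tilde X = -|\nabla|^{2\gamma}\tilde X\,dt + F(\tilde X)\,dt + \Psi(t)\,dt + |\nabla|^{\gamma-1-\alpha}dW,\quad \tilde X(0) = X_0\;,
\]
such that $\tilde X(t) = Y(t)$ almost surely. Given such a pair, the standard Girsanov density (applied with suitable stopping-time localization to handle Novikov) produces a measure $\P_\Psi \sim \P$ under which $X$ has the same law as $\tilde X$ under $\P$; hence $\Law_\P(Y(t)) = \Law_\P(\tilde X(t)) = \Law_{\P_\Psi}(X(t)) \ll \Law_\P(X(t))$. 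Exchanging the roles of $X$ and $Y$ via $(F, G) \mapsto (F+G, -G)$ yields the reverse absolute continuity, and hence equivalence.

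The naive choice $\Psi = G(Y)$ makes $\tilde X = Y$ pathwise but fails the Cameron--Martin requirement: $G(Y)$ only lies in $\CC^{\alpha+1-2\gamma+\kappa}$, roughly $\gamma-\kappa$ derivatives below $H^{\alpha+1-\gamma}$. The time-shifted Girsanov method of Mattingly--Suidan et al. fixes this. At the linear level, the change of variables $u = 2r-t$ gives
\[
\int_0^t P^{(\gamma)}_{t-s}Z(s)\,ds = \int_{t/2}^t P^{(\gamma)}_{t-r}\bigl(2 P^{(\gamma)}_{t-r}Z(2r-t)\bigr)\,dr\;,
\]
so any drift $Z$ may be replaced by the smoother shift $2P^{(\gamma)}_{t-r}Z(2r-t)\mathbf{1}_{r>t/2}$, which picks up $\gamma$ derivatives through \eqref{e:semigroupbounds} and becomes square-integrable on $[t/2, t]$.

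For the genuinely nonlinear equation, the semigroup $P^{(\gamma)}$ must be replaced by the linearized propagator $J$ of Lemma~\ref{lem:boundsLinear}, and the argument is carried out by \emph{integrating up infinitesimal Girsanov shifts} in the spirit of \cite{SFGeneral}. Introduce a coupled family $(\bar Y^s, \Psi^s)_{s\in[0,1]}$ with $\bar Y^0 = X$, $\Psi^0 = 0$, where $\bar Y^s$ solves the shifted SPDE with drift $F + \Psi^s$ and
\[
\Psi^s(r) = \int_0^s 2\, J^{\bar Y^\sigma}_{2r-t,\, r}\, G(\bar Y^\sigma(2r-t))\,\mathbf{1}_{r > t/2}\,d\sigma\;,
\]
with $J^{\bar Y^\sigma}$ the linearized propagator of the shifted SPDE around $\bar Y^\sigma$. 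Existence of the family follows from a Picard iteration in $s$, using the uniform smoothness of $F, G$, Lemma~\ref{lem:boundsLinear}, and the Lipschitz stability of $J$ provided by Corollary~\ref{cor:Linear}. The endpoint identity $\bar Y^1(t) = Y(t)$ is then obtained by matching, for each $s$, the $s$-derivative of $\bar Y^s(t)$ against the $s$-derivative of the \emph{natural} interpolation $Y^s$ solving $dY^s = (-|\nabla|^{2\gamma}Y^s + F(Y^s) + sG(Y^s))\,dt + |\nabla|^{\gamma-1-\alpha}dW$, invoking the linearized Mattingly identity. Setting $\Psi := \Psi^1$ and $\tilde X := \bar Y^1$ completes the construction.

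The regularity threshold $\gamma > 2/3$ enters precisely via Lemma~\ref{lem:boundsLinear}: choosing $\beta = \alpha - \kappa$, $\delta = \alpha + 1 - 2\gamma + \kappa$, and $\theta = \tfrac12 - \tfrac{\kappa}{\gamma}$, the admissibility constraint $\beta - 1 + 2\gamma > \delta + 2\gamma\theta > \beta$ reduces exactly to $\gamma > (2+\kappa)/3$, valid for $\kappa > 0$ small. The resulting estimate yields $\|\Psi^s(r)\|_{H^{\alpha+1-\gamma}} \lesssim C_{t,R}\,(t-r)^{-1/2 + \kappa/\gamma}\,\|G(\bar Y^\sigma(2r-t))\|_{\CC^{\alpha+1-2\gamma+\kappa}}$, which is square-integrable on $[t/2, t]$, placing $\Psi^s$ in the Cameron--Martin space a.s. The main obstacles are twofold: (i) rigorously verifying $\bar Y^1(t) = Y(t)$ requires carefully reconciling the mismatch between the linearizations of the natural ($DF + s\,DG$) and shifted ($DF$ only) flows along the interpolation; and (ii) the exponentially large random constant $C_{t,R}$ in Lemma~\ref{lem:boundsLinear} precludes a direct verification of Novikov, forcing a localization via stopping times $\tau_R$ truncating the solution norms, with $R \to \infty$ justified by the global well-posedness of both SPDEs.
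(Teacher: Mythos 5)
Your overall strategy mirrors the paper's --- time-shifted Girsanov smoothing via the linearized propagator, integration of infinitesimal shifts in a parameter $s$, localization for Novikov, and a strong-Markov iteration with $R\to\infty$ --- and your bookkeeping of the $\gamma>2/3$ threshold through Lemma~\ref{lem:boundsLinear} is correct. However, there is a genuine structural gap in the interpolation scheme that your ``obstacle (i)'' actually names: it is not a detail to be reconciled, but a reason the construction as proposed cannot close.

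You keep the nonlinearity fixed at $F$ and vary only the shift $\Psi^s$, hoping that $\bar Y^1(t)=Y(t)$ by matching $\partial_s\bar Y^s(t)$ against $\partial_s Y^s(t)$ for the natural interpolation $Y^s$ with drift $F+sG$. But the propagator $J^{\bar Y^s}$ in your formula is the linearization of the $F$-flow, so it satisfies $\partial_\tau J = -|\nabla|^{2\gamma}J + DF(\bar Y^s_\tau)J$; the propagator governing $\partial_s Y^s(t)$ is the linearization of the $F+sG$-flow, $\partial_\tau \tilde J = -|\nabla|^{2\gamma}\tilde J + (DF(Y^s_\tau)+s\,DG(Y^s_\tau))\tilde J$. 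These do not coincide for $s>0$, even if $\bar Y^s\equiv Y^s$ as paths. Concretely, expanding in powers of $G$, your $\bar Y^1(t)$ and $Y(t)$ agree to first order but diverge at second order through the $DG$-term, so the endpoint identity you need fails. The paper sidesteps this entirely by interpolating the nonlinearity \emph{as well as} the shift: it sets $F_s = F\chi^{\rm sm}_{2R} + sG\chi^{\rm sm}_{2R}$, defines $X^{(h,s)}$ with drift $F_s$ and shift $h$, writes $J^{(h,s)}$ with $DF_s$ (equation~\eqref{e:JstSPDE}), and then solves \eqref{e:shiftEquation} for $h_s$ so that $\partial_s X^{(h_s,s)}_T = 0$ exactly --- with no propagator mismatch. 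Then $X_T = X^{(0,0)}_T = X^{(h_1,1)}_T = Y^{h_1}_T$ on the good event, and Girsanov applies.

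Two further points you pass over but that are essential for the argument to close. First, local Picard iteration in $s$ does not give a solution of the random ODE on all of $[0,1]$; the paper builds a Lipschitz cutoff $\chi_{2R}$ into \eqref{e:shiftEquationTruncated} precisely so that the right-hand side is globally bounded and the ODE has a global-in-$s$ solution, with the cutoff shown afterwards to be inactive on the event $F_{R,T_R}$ for $T_R$ small. Second, truncating the nonlinearity while retaining uniform smoothness is delicate because there is no smooth bump function on $\CC^{\alpha-\kappa}$; the paper therefore uses two cutoffs, the Lipschitz $\chi$ for the random ODE and a genuinely smooth $\chi^{\rm sm}$ built from a $B^{\alpha-2\kappa}_{p,p}$-norm for the nonlinearity, and arranges the radii so both are trivial on the good event. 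Your stopping-time localization for Novikov is a sound idea, but without these cutoffs the construction of the shift itself is not yet well-defined.
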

At the core of the proof of this statement is an application of Girsanov's theorem to 
a suitable family of modified SPDEs interpolating between $X$ and $Y$. 
To this end, define $X^{(h,s)}_t$ as the solution to the SPDE
\begin{equ}[e:SPDEgen]
dX^{(h,s)}_t = -|\nabla|^{2\gamma} X^{(h,s)}_t\,dt + F_s(X^{(h,s)}_t)\,dt + |\nabla|^{\gamma-1-\alpha} (dW + h\,dt)\;,
\end{equ}
where $W$ is a cylindrical Wiener process on $L^2$, and $h$ is a priori just a  measurable drift. We will assume the following 
on the family of equations \eqref{e:SPDEgen}. 
\begin{assumption} \label{a:SPDEgen1}
    For all $\kappa>0$ sufficiently small, $(F_s)_{s\in[0,1]}$ is a uniformly smooth path from $\CC^{\alpha-\kappa}$ to $\CC^{\alpha-1-\kappa}$. Furthermore, we assume that $\d_s F_s$ maps $\CC^{\alpha-\kappa}$ to $\CC^{\alpha+1-2\gamma + \kappa}$ and is Lipschitz on bounded sets.
\end{assumption}
\begin{assumption} \label{a:SPDEgen2}
The equation \eqref{e:SPDEgen} has global solutions, in the sense that for every $T>0$, 
for every  $h \in L^2([0,T], L^2)$, and for every initial data $X_0 \in \CC^{\al-\kappa}$,
    $$ \sup_{s\in[0,1]}\sup_{t \in [0,T]} \|X^{(h,s)}_t\|_{\CC^{\al - \kappa}} < \infty \quad \text{a.s.} $$
\end{assumption}
\begin{assumption} \label{a:SPDEgen3}
    The map $ L^2([0,T]; L^2) \ni h \to X^{(h,s)}_t \in L^\infty([0,T]; \CC^{\al - \kappa})$ is locally Lipschitz uniformly in $s$. More precisely, for every $R >0$, we have that 
    \begin{equ}
        \sup_{\|h_1\|+ \|h_2\| \le R}\, \sup_{s\in [0,1]} \frac{\|X^{(h_1,s)}_t - X^{(h_2,s)}_t\|_{L^\infty([0,T]; \CC^{\al - \kappa})}}{\|h_1 - h_2\|_{L^2([0,T]; L^2)}} < \infty \quad \text{a.s.}
    \end{equ}
\end{assumption}

We also write $J^{(h,s)}$ for the solution to the linearised equation, namely
\begin{equ}\label{e:JstSPDE}
\d_t J^{(h,s)}_{r,t} = -|\nabla|^{2\gamma} J^{(h,s)}_{r,t} + DF_s(X^{(h,s)}_t)J^{(h,s)}_{r,t}\;,
\quad J^{(h,s)}_{r,r} = \id\;.
\end{equ}
In terms of the notations of Lemma~\ref{lem:boundsLinear}, we have
\begin{equation}
    J^{(h,s)}=J^{DF_s(X^{(h,s)})}\;.
\end{equation}
The idea now is to find a map $s \mapsto h_s$ such that, for some fixed final time $T$,
the solution $X^{(h_s,s)}_T$ is independent of $s$. A simple calculation shows that
\begin{equ}[e:dsXscalculation]
\d_s X^{(h_s,s)}_T = \int_0^T J^{(h_s,s)}_{r,T} \bigl((\d_s F_s)(X^{(h_s,s)}_r) + |\nabla|^{\gamma-1-\alpha} \d_s h_s(r)\bigr)\,dr\;.
\end{equ}
Proceeding as in \eqref{e:SGTrick}, the term including $\d_s F_s$ can be rewritten as
\begin{equ}
 \int_0^T J^{(h_s,s)}_{r,T} \bigl(\d_s F_s(X^{(h_s,s)}_r)\bigr)\,dr =
 2\int_{T/2}^T J^{(h_s,s)}_{r,T} \bigl(J^{(h_s,s)}_{2r-T,r}\d_s F_s(X^{(h_s,s)}_{2r-T})\bigr)\,dr\;.
\end{equ}
It follows that, for $X^{(h_s,s)}_T$ to be independent of $s$, we can choose $h$ 
such that $h_s(\tau) = 0$ for $\tau < T/2$ and, for $\tau > T/2$,
\begin{equ}[e:shiftEquation]
  \partial_s h_s(\tau) = -2 |\nabla|^{\al+1-\g}  J_{2\tau - T, \tau}^{(h_s,s)} [\d_s F_s( X^{(h_s,s)}_{2\tau - T}) ]\;.
\end{equ}
We shall seek solutions to \eqref{e:shiftEquation} in the space
\begin{equation}
    \mathbb{Y}=\CC^0([0,1];L^2_TL^2_x)\;,
\end{equation}
namely the space of continuous paths taking values in the Cameron--Martin space of space-time white noise.
As it turns out, in order to solve this ODE, it is convenient to truncate \eqref{e:shiftEquation} in order to guarantee existence of a solution up to the target time $T$.
\begin{lemma} \label{lem:randomODE}
    For $\gamma\in(2/3,1]$ and $\kappa$ sufficiently small, let $\chi: \CC^{\al - \kappa} \to \R$ be a Lipschitz function such that $\supp(\chi) \subseteq \{ x \in \CC^{\al - \kappa}: \| x \|_{\CC^{\al - \kappa}} \le 2\}$. Let $R > 0$. Then, there almost  surely  exists $h\in\mathbb{Y}$ solving the random ODE  
    \begin{equ}[e:shiftEquationTruncated]
        \partial_s h_s(\tau) = -2 \mathbbm{1}_{[T/2,T]}(\tau) |\nabla|^{\al+1-\g}  J_{2\tau - T, \tau}^{(h_s,s)} [\d_s F_s( X^{(h_s,s)}_{2\tau - T}) ] \chi(X^{(h_s,s)}_{2\tau - T}/R)\;
    \end{equ}
    with initial data $h_0 = 0$.
    Moreover, there exists $\eps>0$ and, for every $R>0$ a constant $C_R$, such that $h$ satisfies the almost sure bounds 
    \begin{gather} \label{e:shiftBound1}
             \|h_s\|_{\CC^1([0,1];L^2_TL^2)}\leq T^\varepsilon C_R\;,
    \end{gather} 
uniformly over $T \in [0,1]$.
    Finally, for each $s$, $h_s$ is progressively measurable with respect to the filtration generated by $dW$.
\end{lemma}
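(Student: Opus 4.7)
The plan is to treat \eqref{e:shiftEquationTruncated} as a Banach-valued ODE
\begin{equation*}
    \partial_s h_s = \Psi_s(h_s)\;,\qquad h_0 = 0\;,
\end{equation*}
in $\mathbb{Y}$, where $\Psi_s\colon L^2_T L^2_x \to L^2_T L^2_x$ denotes the right-hand side of \eqref{e:shiftEquationTruncated}. Existence and uniqueness of $h$ will follow from a Picard--Lindel\"of argument, for which we require (i) a uniform a priori bound of the form $\|\Psi_s(h)\|_{L^2_T L^2} \lesssim T^\varepsilon C_R$ with $\varepsilon>0$, and (ii) a local-in-$h$ Lipschitz estimate on $\Psi_s$, both uniform in $s\in[0,1]$. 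Progressive measurability will then be inherited from the measurability of the Picard iterates, which depend measurably on $W$ through $X^{(h,s)}$ and $J^{(h,s)}$.

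For (i), the cutoff $\chi(\cdot/R)$ forces $\|X^{(h,s)}_{2\tau-T}\|_{\CC^{\alpha-\kappa}} \le 2R$ on the support of the integrand, so by the uniform smoothness of $\partial_s F_s$ from $\CC^{\alpha-\kappa}$ to $\CC^{\alpha+1-2\gamma+\kappa}$ (Assumption~\ref{a:SPDEgen1}) we obtain $\|\partial_s F_s(X^{(h,s)}_{2\tau-T})\|_{\CC^{\alpha+1-2\gamma+\kappa}} \lesssim C_R$. Choosing $\theta\in(\tfrac12-\tfrac{\kappa}{2\gamma},\tfrac12)$, the admissibility conditions \eqref{e:condexponents} in Lemma~\ref{lem:boundsLinear} with $\beta=\alpha-\kappa$ and $\delta=\alpha+1-2\gamma+\kappa$ reduce precisely to $\gamma > (2+\kappa)/3$, which holds as $\gamma>2/3$ and $\kappa$ is small. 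The lemma then yields
\begin{equation*}
    \bigl\|J^{(h,s)}_{2\tau-T,\tau}\bigl[\partial_s F_s(X^{(h,s)}_{2\tau-T})\bigr]\bigr\|_{\CC^{\delta+2\gamma\theta}} \lesssim C_R\, C_M\, |T-\tau|^{-\theta}\;,
\end{equation*}
with $\delta+2\gamma\theta > \alpha+1-\gamma$, and where $C_M$ absorbs the propagator constant from \eqref{e:JstBound} (which is a.s.\ finite thanks to Assumption~\ref{a:SPDEgen2}). Since $|\nabla|^{\alpha+1-\gamma}$ then maps continuously into a positive-regularity H\"older space embedding into $L^2(\T^2)$, squaring and integrating $\tau$ over $[T/2,T]$ (permitted by $\theta < \tfrac12$) delivers $\|\Psi_s(h)\|_{L^2_T L^2} \lesssim C_R C_M T^\varepsilon$ for some $\varepsilon>0$.

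For (ii), fix $h^1,h^2$ in a bounded set of $L^2_T L^2$. Assumption~\ref{a:SPDEgen3} provides $\|X^{(h^1,s)}-X^{(h^2,s)}\|_{L^\infty_T\CC^{\alpha-\kappa}} \lesssim \|h^1-h^2\|_{L^2_T L^2}$, which together with the uniform smoothness of $\partial_s F_s$, $\chi$ and $DF_s$ controls the differences of the source terms and of the coefficients in the linearisation \eqref{e:JstSPDE}. The difference of propagators $J^{(h^1,s)}-J^{(h^2,s)}$ is handled by Corollary~\ref{cor:Linear}, whose extra $|t-s|^{1-\theta-1/(2\gamma)-\kappa}$ factor remains integrable in $\tau$ for $\gamma>2/3$ and $\kappa$ small. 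Assembling these ingredients exactly as in (i) yields the required local Lipschitz bound on $\Psi_s$.

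With (i) and (ii) in hand, Picard--Lindel\"of on $\mathbb{Y}$ produces a unique $h\in\CC^1([0,1]; L^2_T L^2)$ solving \eqref{e:shiftEquationTruncated}; since $s\in[0,1]$ and $\partial_s h_s = \Psi_s(h_s)$, the $\CC^1$-bound \eqref{e:shiftBound1} follows directly from (i). The principal obstacle is that the constant $C_M$ in (i) depends implicitly on $h$ through $\|X^{(h,s)}\|_{L^\infty_T\CC^{\alpha-\kappa}}$, so the a priori bound is not \emph{per se} independent of the fixed-point iterate. For each realisation of the noise, this is overcome by using Assumption~\ref{a:SPDEgen3} to bound $X^{(h,s)}$ uniformly over any ball $\{\|h\|_{L^2_T L^2}\le K\}$; choosing $K$ sufficiently large (depending on $R$ and the noise) keeps the Picard iterates inside this ball and closes the global-in-$s$ argument.
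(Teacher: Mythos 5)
Your proposal follows essentially the same route as the paper's proof: treat \eqref{e:shiftEquationTruncated} as a Banach-valued fixed-point problem, invoke Lemma~\ref{lem:boundsLinear} with $\beta=\alpha-\kappa$, $\delta=\alpha+1-2\gamma+\kappa$, and $\theta\in(\tfrac12-\tfrac{\kappa}{2\gamma},\tfrac12)$ to get the a priori bound (and hence the constraint $\gamma>2/3$), and use Corollary~\ref{cor:Linear} together with Assumption~\ref{a:SPDEgen3} for the local Lipschitz estimate on the driving vector field. The analytical estimates in your steps (i) and (ii) are correct and match the paper's computations.

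The genuine gap is in the final claim of your opening paragraph: ``progressive measurability will then be inherited from the measurability of the Picard iterates'' is an assertion, not an argument. Measurability with respect to the terminal $\sigma$-algebra does not yield progressive measurability, which requires that $\one_{[0,t]}h_s$ be $\CF_t$-measurable for every $t$, and this is precisely the property needed to apply Girsanov downstream. The paper establishes it by a causality/restriction identity: writing $\mathbf F_s(h)=\mathbb F_s(X^{(h,s)})$ and $\Pi_t h=\one_{[0,t]}h$, one checks that $\Pi_t\mathbf F_s(h)=\Pi_t\mathbf F_s(\Pi_t h)$, since the right-hand side of \eqref{e:shiftEquationTruncated} evaluated at a given $\tau$ depends on $h$ and on $W$ only through their restrictions to $[0,\tau]$ (both $X^{(h,s)}$ and the propagator $J^{(h_s,s)}_{2\tau-T,\tau}$ are causal). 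By uniqueness of the ODE, $\Pi_t h$ then solves a closed ODE built from $\CF_t$-measurable data, which gives adaptedness. Some version of this argument must appear; without it the conclusion about progressive measurability is unsubstantiated.

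Your closing paragraph correctly flags a real subtlety: the cutoff $\chi(X^{(h_s,s)}_{2\tau-T}/R)$ only controls $X^{(h_s,s)}$ at the single time $2\tau-T$, whereas the propagator constant from Lemma~\ref{lem:boundsLinear} requires control of $\sup_{\sigma\in[2\tau-T,\tau]}\|DF_s(X^{(h_s,s)}_\sigma)\|_{\CC^\beta\to\CC^{\beta-1}}$. However, the remedy you propose yields a constant depending on the noise realisation, which does not produce the deterministic $C_R$ asserted in \eqref{e:shiftBound1}. The paper's own passage to $\sup_{\|X\|_{L^\infty_T\CC^{\beta}}\leq 2R}$ elides the same point; in the eventual application to Proposition~\ref{prop:equiv} the bound is only invoked on the event $E_{R,T_R}$ where the entire trajectory stays in the $2R$-ball, which is what effectively justifies the deterministic constant.
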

\begin{proof}
    For any fixed realisation of the noise, we solve \eqref{e:shiftEquationTruncated} by a standard Picard iteration.
We write the right-hand side of \eqref{e:shiftEquationTruncated} as $\mathbb{F}_r:L^\infty_T \CC^{\al-\kappa}\rightarrow L^2_TL^2$ given by
    \begin{equation*}
        \mathbb{F}_r(X)(\tau)=\mathbbm{1}_{[T/2,T]}(\tau) |\nabla|^{\al+1-\g}  J_{2\tau - T, \tau}^{DF_r(X)} [\d_r F_r( X_{2\tau - T}) ] \chi(X_{2\tau - T}/R)\;.
    \end{equation*} By Assumption \ref{a:SPDEgen3}, the map $$ L^2_TL^2_x \ni h \mapsto X^{(h,s)} \in L^\infty([0,T], \CC^{\al-\kappa})$$
    is Lipschitz on bounded sets, so we only need to show that $\mathbb{F}_s$ maps boundedly into $L^2_TL^2$, and is Lipschitz (uniformly in $s$). To see the boundedness, we set $\beta=\alpha-\kappa$ and see
    \begin{equs}
        \sup_{X\in L^\infty_T\CC^{\beta}}&\| \mathbb{F}_s(X)\|_{L^2_TL^2}^2 \\
        &\lesssim \sup_{\|X\|_{L^\infty_T\CC^{\beta}}\leq 2R}\int_{\frac T2}^T\left\|J_{2\tau - T, \tau}^{DF_s(X)} [\d_s F_s( X_{2\tau - T}) ]\right\|^2_{H^{\alpha+1-\gamma}}\,d\tau \\
        &\lesssim \sup_{\|Y\|_{\CC^{\beta}}\leq 2R}\left\|\d_s F_s(Y) \right\|^2_{\CC^{\delta}}\sup_{\|X\|_{L^\infty_T\CC^{\beta}}\leq 2R}\int_{\frac T2}^T\left\|J_{2\tau - T, \tau}^{DF_s(X)}\right\|^2_{\CC^{\delta}\rightarrow \CC^{\delta+2\gamma\theta}} \,d\tau\;,
    \end{equs} for any $\delta,\theta$ such that
    \begin{equation}\label{e:deltagamma1}
        \delta+2\gamma\theta>\alpha+1-\gamma\;.
    \end{equation} 
    We choose $\delta=\alpha+1-2\gamma+\kappa$, so by Assumption \ref{a:SPDEgen1} we have
    \begin{equation*}
        D_{R}= \sup_{s\in[0,1]}\sup_{\|Y\|_{\CC^{\beta}}\leq 2R}\left\|\d_s F_s(Y) \right\|_{\CC^{\delta}}<\infty\;.
    \end{equation*}
    For this choice of $\delta$, if $\theta\in[0,\frac12)$ such that
    \begin{equation}\label{e:deltagamma2}
        \beta-1+2\gamma>\delta+2\gamma\theta>\beta,
    \end{equation}
    then by \eqref{e:JstBound} there exists a constant $\tilde C_R$ such that
    \begin{align*}
        \left\|J_{2\tau - T, \tau}^{DF_s(X)}\right\|_{\CC^{\delta}\rightarrow \CC^{\delta+2\gamma\theta}}\leq \tilde C_R|T-\tau|^{-\theta}\;,
    \end{align*}
uniformly over $T \in [0,1]$.
    So if there is a choice of $\theta\in[0,\frac12)$ which satisfies \eqref{e:deltagamma1} and \eqref{e:deltagamma2} then we have
    \begin{equs}[e:boundFs]
        \sup_{X\in L^\infty_T\CC^{\beta}}\| \mathbb{F}_s(X)\|_{L^2_TL^2}
        &\lesssim \wt{C}_{T,Q_R}D_R\Bigl(\int_{T/2}^T|T-\tau|^{-2\theta}\,d\tau\Bigr)^{\frac12} \\
        &\lesssim T^\varepsilon C_{T,Q_R}D_R<\infty\;,
    \end{equs}
    for 
    \begin{equation*}
        Q_{R}= \sup_{s\in[0,1]}\sup_{\|Y\|_{\CC^{\beta}}\leq 2R}\left\|DF_s(Y)\right\|_{\CC^{\beta}\rightarrow \CC^{\beta-1}}<\infty\;.
    \end{equation*}
    We see that \eqref{e:deltagamma1} is equivalent to
    $
        \theta>\frac12-\frac{\kappa}{2\gamma}
    $
    and \eqref{e:deltagamma2} is equivalent to
    \begin{equation*}
        2-\tfrac{1}{\gamma}-\tfrac{\kappa}{\gamma}>\theta>1-\tfrac{1}{2\gamma}-\tfrac\kappa\gamma\;.
    \end{equation*}
    This can be achieved for some $\theta<\frac12$ if
    \begin{equation*}
        2-\tfrac1\gamma>\tfrac12\geq 1-\tfrac{1}{2\gamma}\quad\iff\quad \gamma\in(2/3,1]\;,
    \end{equation*} and $\kappa>0$ is sufficiently small (depending on $\gamma$). Thus $\mathbb{F}_s$ maps boundedly into $L^2_TL^2$. 
    
    To show that $\mathbb{F}_s$ is Lipschitz, by the cutoff and the boundedness of $\mathbb{F}_s$, we reduce to bounding
    \begin{equation*}
        \left\|J_{2\tau - T, \tau}^{DF_s(X)} \bigl[\d_s F_s( X_{2\tau - T}) \bigr] - J_{2\tau - T, \tau}^{DF_s(Y)} \bigl[\d_s F_s( Y_{2\tau - T}) \bigr]\right\|_{L^2([T/2,T];H^{\alpha+1-\gamma})}\;,
    \end{equation*}
    for $\|X\|_{L^\infty_T\CC^{\beta}},\,\|Y\|_{L^\infty_T\CC^{\beta}}\leq 2R$. We separately bound
    \begin{equation*}
        \left\|J_{2\tau - T, \tau}^{DF_s(X)} \bigl[\d_s F_s( X_{2\tau - T}) -\d_s F_s( Y_{2\tau - T}) \bigr]\right\|_{L^2([T/2,T];H^{\alpha+1-\gamma})}\;,
    \end{equation*}
    and
     \begin{equation*}
        \left\|\bigl(J_{2\tau - T, \tau}^{DF_s(X)} - J_{2\tau - T, \tau}^{DF_s(Y)}\bigr) \bigl[\d_s F_s( Y_{2\tau - T}) \bigr]\right\|_{L^2([T/2,T];H^{\alpha+1-\gamma})}\;.
    \end{equation*}
    The first term is bounded exactly as above, using Assumption~\ref{a:SPDEgen1} that $\d_s F_s$ is locally Lipschitz. For the second term we take the same approach as before, namely
    \begin{align*}
        &\left\|\bigl(J_{2\tau - T, \tau}^{DF_s(X)} - J_{2\tau - T, \tau}^{DF_s(Y)}\bigr) \bigl[\d_s F_s( Y_{2\tau - T}) \bigr]\right\|_{L^2([T/2,T];H^{\alpha+1-\gamma})} \\
        &\lesssim D_R\left\|J_{2\tau - T, \tau}^{DF_s(X)} - J_{2\tau - T, \tau}^{DF_s(Y)}\right\|_{L^2([T/2,T];\CC^{\delta}\rightarrow \CC^{\delta+2\gamma\theta})}\;,
    \end{align*}
    and by \eqref{e:JstBoundLip} we achieve the Lipschitz bound (note the condition $\theta<\frac12$ is replaced by the condition $1-\theta-\frac{1}{2\gamma}-\kappa>-\frac12$, but for $\gamma>\frac12$, this is a weaker condition on $\theta$).
    So by a standard fixed point argument (iterating small intervals of $s$), there is a unique solution to \eqref{e:shiftEquationTruncated}, almost-surely. Moreover, the bound on $\mathbb{F}_s$ given in \eqref{e:boundFs}
    implies the desired uniform bound \eqref{e:shiftBound1}.
 
To show that $h$ is progressively measurable we use the following fact. Let $h$ solve the 
ODE $\d_s h = \mathbf{F}_s(h)$ in some Banach space $B$, where $\mathbf{F}_s$ is locally Lipschitz
continuous, uniformly over $s \in [0,1]$. Let furthermore $\Pi \colon B \to B$ be a continuous linear map
such that 
\begin{equ}[e:restrProp]
\Pi \mathbf{F}_s(h) = \Pi\mathbf{F}_s(\Pi h)\;,
\end{equ}
for every $s$ and every $h \in B$.
Then $\tilde h = \Pi h$ is the unique solution to the ODE $\d_s \tilde h  = \Pi \mathbf{F}_s(\tilde h)$.
It then suffices to note that, for every $t \in [0,T]$, the ``restriction map''
$\Pi_t h = \one_{[0,t]}h$ is indeed such that \eqref{e:restrProp} holds for
\begin{equ}
\mathbf{F}_s(h) = \mathbb{F}_s(X^{(h,s)})\;.
\end{equ}
It is furthermore straightforward from the definitions that
$\Pi_t \mathbf{F}_s$ is $\CF_t$-measurable, whence we deduce that 
the same is true of $\Pi_t h$. Since $h$ is continuous on $[T/2,T]$, the claim follows.
\end{proof}

\begin{proof}[Proof of Proposition \ref{prop:equiv}]
    We show that $\Law(X(t)) \ll \Law(Y(t))$. The reverse absolute continuity follows from replacing $F \mapsto F+G$, $G\mapsto -G$. For convenience of notation, denote by $Y^h$ the solution to 
    \begin{equ}
        dY = -|\nabla|^{2\gamma} Y\,dt + F(Y)\,dt + G(Y)\,dt + |\nabla|^{\gamma-1-\alpha}(dW + h\, dt)\;,
    \end{equ}
    and write $Z$ for the Ornstein--Uhlenbeck process 
    \begin{equ}
        dZ = -|\nabla|^{2\gamma} Z\,dt + |\nabla|^{\gamma-1-\alpha}dW\;, \qquad
        Z(0) = X_0\;.
    \end{equ}
    For $R\ge 0$ and $T>0$ we let $F_{R,T}$ be the event
    \begin{equ}
        F_{R,T}=\Bigl\{ \sup_{\tau \in [0,T]} \bigl(\|Z_\tau \|_{\CC^{\al - \kappa}} + \|X_\tau \|_{\CC^{\al - \kappa}}\bigr) < R\Bigr\}.
    \end{equ}
    For a random variable $X$ and an event $E$, we introduce the notation 
    \begin{equ}
        \Law(X;E)
    \end{equ}
    to denote the restriction of the law of $X$ to the event $E$.
    In particular, one has $\Law(X;E) \to \Law(X)$ in total variation as $\P(E) \to 1$.
    
    We first aim to show that for all $R>\|X_0\|_{\CC^{\alpha-\kappa}}$, there is a $\wt{T}_R>0$ such that for all $0<t<\wt{T}_R$, we have
    \begin{equ}[e:localac]
        \Law(X(t);F_{R,t}) \ll \Law(Y(t)).
    \end{equ}
    (Note here that the event $F_{R,t}$ does not just depend on $X(t)$ but on the whole trajectory
    up to time $t$.) We then will conclude by using the strong Markov property and global-well-posedness of solutions.
    To obtain \eqref{e:localac}, we introduce two cutoffs: a Lipschitz cutoff for the $\CC^{\alpha-\kappa}$-norm to apply Lemma \ref{lem:randomODE}, and a uniformly smooth cutoff in the nonlinearity controlling a weaker norm.
    For an appropriate $\chi_0: \R \to \R$ smooth with compact support and $p = p(\al,\kappa)$ an even integer sufficiently large, let $\chi,\chi^{\rm{sm}}: \CC^{\alpha - \kappa} \to \R$ be given by 
    \begin{equ}
        \chi(X) = \chi_0(\|X\|_{\CC^{\alpha-\kappa}})\;, \qquad
        \chi^{\rm sm}(X) = \chi_0(\|X\|_{B^{\al - 2\kappa}_{p,p}}^p)\;.
    \end{equ}
    We note that $\chi$ is Lipschitz with bounded support, and $\chi^{\rm sm}$ is smooth with locally bounded derivatives (due to the smoothness of the $B^{\al - 2\kappa}_{p,p}$-norm for $p$ an even integer, and the embedding $\CC^{\alpha-\kappa}\hookrightarrow B^{\al - 2\kappa}_{p,p}$), but with unbounded support\footnote{By \cite[Theorem 25]{Fry02}, there is no bump function with Lipschitz Fr\'echet derivative on~$\CC^{\alpha-\kappa}$.}. For any choice of $p$, we may choose $\chi_0$ so that $\chi(X)=\chi^{\rm{sm}}(X)=1$ for $\|X\|_{\CC^{\al -\kappa}} \le 1$. For $p$ sufficiently large, we further have the embedding
    \begin{equation*}
        B^{\alpha-2\kappa}_{p,p}\hookrightarrow \CC^{\alpha-3\kappa},
    \end{equation*}
    and so the support of $\chi^{\rm{sm}}$ is bounded in the $\CC^{\alpha-3\kappa}$-norm. For $R>0$ we let
    \begin{equation*}
        \chi_R(X)=\chi(X/R)\;,\qquad \chi^{\rm{sm}}_R(X)=\chi^{\rm{sm}}(X/R)\;.
    \end{equation*}
    For fixed $R\gg \|X_0\|_{\CC^{\alpha-\kappa}} \gtrsim \|X_0\|_{\CC^{\al - 3\kappa}}$,  we let
    \begin{equ}
        F_s = F \chi^{\rm sm}_{2R} + sG \chi^{\rm sm}_{2R}\;.
    \end{equ}
    We see that $F_s$ is a uniformly smooth path, and satisfies Assumption \ref{a:SPDEgen1}. To see that Assumption \ref{a:SPDEgen2} is satisfied, we see for $0<2\kappa<2\gamma-1$, and $3\kappa$ small enough (depending on $F$),
    \begin{align*}
        \left\|\int_0^t P_{t-\tau}^{(\gamma)} F(X_\tau)\chi^{\rm sm}_{2R}(X_\tau)\,d\tau\right\|_{\CC^{\alpha-\gamma}}&\lesssim \|F(X_\tau)\chi^{\rm sm}_{2R}(X_\tau)\|_{L^\infty_t\CC^{\alpha-\kappa-2\gamma}} \\
        &\lesssim \sup_{\|X\|_{\CC^{\alpha-3\kappa}}\lesssim R}\|F(X)\|_{\CC^{\alpha-1-3\kappa}} \\
        &\lesssim C_{R,\kappa}<\infty\;,
    \end{align*} namely that the $\CC^{\alpha-3\kappa}$-norm controls the growth of solutions. 
    Iterating the fixed point argument, we see that Assumption~\ref{a:SPDEgen2} holds, and Assumption~\ref{a:SPDEgen3} follows suit. Omitting the dependence on $R$, we let $\wt{X}^{(h,s)}$ denote the solution to 
    \begin{equ}
        d\wt{X}^{(h,s)}_t = -|\nabla|^{2\gamma} \wt{X}^{(h,s)}_t\,dt + F_s(\wt{X}^{(h,s)}_t)\,dt + |\nabla|^{\gamma-1-\alpha} (dW + h\,dt)\;.
    \end{equ}
    Fix $T_R>0$ to be determined later and let $h\in\CC^1([0,1];L^2_{T_R}L^2)$ be the solution from Lemma~\ref{lem:randomODE} to the random ODE
    \begin{equ}
        \partial_s h_s(\tau) = -2 \mathbbm{1}_{[T_R/2,T_R]}(\tau) |\nabla|^{\al+1-\g}  J_{2\tau - T_R, \tau}^{(h_s,s)} [\d_s F_s( \wt{X}^{(h_s,s)}_{2\tau - T_R}) ] \chi_{2R}(\wt{X}^{(h_s,s)}_{2\tau - T_R})\;.
    \end{equ}
    We note that $h$ depends on the choice of $R$ and $T_R$.
    On the event
    \begin{equ} \label{e:goodEvent0}
        E_{R,T_R} = \Bigl\{ \sup_{s\in[0,1]}\sup_{\tau \in [0,T_R]} \|\wt{X}^{(h_s,s)}_\tau \|_{\CC^{\al - \kappa}} < 2R\Bigr\}\;,
    \end{equ}
    we necessarily have $\sup_{\tau \in [0,T]} \|\wt{X}^{(0,0)}_\tau \|_{\CC^{\al - \kappa}},\|\wt{X}^{(h_1,1)}_\tau \|_{\CC^{\al - \kappa}}<2R$. On this event, we have that $F_0=F$ and $F_1=F+G$, hence for all $\tau\in[0,T_R]$ we have that
    \begin{equ}
        X_{\tau}=\wt{X}_{\tau}^{(0,0)},\quad Y_\tau^{h_1}=\wt{X}_{\tau}^{(h_1,1)}\;.
    \end{equ}
    Moreover, on this event $h$ satisfies \eqref{e:shiftEquation}, and so
    \begin{equ}[e:TRendpointequal]
        X_{T_R} = \wt{X}_{T_R}^{(0,0)} = \wt{X}_{T_R}^{(h_1,1)} =  Y_{T_R}^{h_1}\;.
    \end{equ}
    In light of the uniform bound \eqref{e:shiftBound1}, $h_1$ is a bounded progressively measurable process in $L^2_{T_R}L^2$. In particular it satisfies Novikov's condition, so by Girsanov's theorem, we deduce
    \begin{equ}
        \Law(Y^{h_1}(T_R)) \ll \Law(Y(T_R))\;.
    \end{equ}
    By \eqref{e:TRendpointequal}, we see that
    \begin{equ}
        \Law(X_{T_R};E_{R,T_R})=\Law(Y^{h_1}(T_R);E_{R,T_R}) \ll \Law(Y(T_R))\;,
    \end{equ}
    so that \eqref{e:localac} follows if we can show that
       $F_{R,T_R}\subseteq E_{R,T_R}$
    for any $T_R$ sufficiently small. To this end, we see in mild formulation 
    \begin{equ}
        \wt{X}_t^{(h_s,s)}=Z_t+\int_0^t P^{(\gamma)}_{t-\tau}\bigl( F_s(\wt{X}_\tau^{(h_s,s)})+|\nabla|^{\gamma-1-\alpha} h_s(\tau)\bigr)d\tau\;.
    \end{equ}
    We recall \eqref{e:shiftBound1} implies
    \begin{align*}
        \left\|\int_0^t P^{(\gamma)}_{t-\tau}|\nabla|^{\gamma-1-\alpha} h_s(\tau)d\tau\right\|_{L^\infty([0,T_R] ; \CC^{\alpha-\kappa)}}&\lesssim \left\||\nabla|^{\gamma-1-\alpha} h_s\right\|_{L^2([0,T_R] ; H^{\alpha-\gamma-\kappa)}} \\
        &\lesssim T_R^{\varepsilon}C_R\;,
    \end{align*}
    and exactly as in the calculation verifying Assumption \ref{a:SPDEgen2}
    \begin{equs}
        \Bigl\|&\int_0^t P^{(\gamma)}_{t-\tau}F_s(\wt{X}_\tau^{(h_s,s)})d\tau\Bigr\|_{L^\infty([0,T_R] ; \CC^{\alpha-\kappa)}} \\ &\lesssim T_R^{\varepsilon}\sup_{X\in \CC^{\alpha-\kappa}} \|(F(X)+sG(X))\|_{\CC^{\alpha-\kappa-2\gamma(1-\varepsilon)}}|\chi^{\rm{sm}}_R(X)| \\
        &\lesssim T_R^{\varepsilon}\sup_{\|X\|_{\CC^{\alpha-3\kappa}}\leq CR} \|(F(X)+sG(X))\|_{\CC^{\alpha-\kappa-2\gamma(1-\varepsilon)}} \lesssim T_R^{\varepsilon} C_R,
    \end{equs}
    for $\alpha-3\kappa-1>\alpha-\kappa-2\gamma(1-\varepsilon)$, namely for $\kappa,\varepsilon$ sufficiently small (and $\gamma>1/2$). In particular, there exists some constant $C_R$ such that for all $T_R\ll 1$, on the event $F_{R,T_R}$ we have
    \begin{align*}
        \sup_{s\in[0,1]}\sup_{\tau \in [0,T_R]} \|\wt{X}^{(h_s,s)}_\tau \|_{\CC^{\al - \kappa}} &\leq \sup_{\tau\in[0,T]}\|Z_\tau\|_{}+T_R^{\varepsilon}C_R \\
        &\leq R+T_R^{\varepsilon}C_R,
    \end{align*}
    so that $F_{R,T_R}\subseteq E_{R,T_R}$ for $T_R^{\varepsilon}C_R< R$.
    It follows that for all $R>\|X_0\|_{\CC^{\alpha-\kappa}}$, there is a $\wt{T}_R>0$ such that for all $0<t<\wt{T}_R$, we have
    \begin{equ}
        \Law(X(t);F_{R,t}) \ll \Law(Y(t))\;.
    \end{equ}
    As this holds for any initial data and $X,Y$ have the strong Markov property by Remark~\ref{rem:SM}, we may iterate and obtain that for every $0 \le t_1, t_2 < \wt{T}_R $, we have that 
    \begin{equ}
        \Law(X(t_1 + t_2);F_{R,t_1} \cap F_{R,t_2}^{t_1}) \ll \Law(Y(t_1+t_2))\;,
    \end{equ}
    where $F_{R,t_2}^{t_1}$ denotes
    \begin{equ}
       F_{R,t_2}^{t_1} = \Bigl\{\sup_{\tau \in [t_1,t_1+t_2]} \bigl(\|Z^{t_1}_{\tau}\|_{\CC^{\alpha-\kappa}} + \|X_\tau \|_{\CC^{\al - \kappa}}\bigr) < R \Bigr\}\;,
    \end{equ}
    where $Z^{t_1}_\tau$ is the OU process restarted with initial condition $Z_{t_1}=X_{t_1}$.
    For a general $T>0$, we iterate this $n \ge \frac T {\wt{T}_R}$ times and obtain that
    \begin{equ}
        \Law(X(T);\tilde{F}_{R,T}) \ll \Law(Y(T))\;,
    \end{equ}
    where $T=t_1+\ldots +t_n $, for $0\leq t_i<\wt{T}_R$ and
    \begin{equ}
        \tilde{F}_{R,T}=F_{R,t_1}\cap F_{R,t_1}^{t_{2}}\cap\ldots\cap F_{R,t_{n-1}}^{t_{n}}\;.
    \end{equ}
    We note that $Z^{t_i}_\tau=Z_{\tau}+P^{(\gamma)}_{\tau-t_1}(X_{t_i}-Z_{t_i})$, and so $F_{R/3,T}\subseteq \tilde{F}_{R,T}$.
    By the global well-posedeness of $X$, we have that $\ind_{F_{R/3,T}} \to \ind$ a.s.\ as $R \to \infty$, so by taking limits in total variation we conclude that
       $$ \Law(X(T)) \ll \Law(Y(T))\;,$$
       thus completing the proof.
\end{proof}

\subsection{Application to stochastic Navier--Stokes equations}
We wish to apply Proposition \ref{prop:equiv} to equations \eqref{e:SNSgen} and \eqref{e:equvvelocity}, namely
\begin{equs}
     \partial_t X&=-|\nabla|^{2\gamma} X- P\div \bigl(X \otimes X\bigr) + \sqrt2\boldsymbol{\xi}_{\alpha +1 -\gamma}\;, \label{e:equvX}\\
    \partial_t Y&=-|\nabla|^{2\gamma} Y- |\nabla|^{-\alpha} P\mathrm{div} \bigl(Y \otimes |\nabla|^{\alpha}Y\bigr) + \sqrt2\boldsymbol{\xi}_{\alpha +1 -\gamma} \label{e:equvY}\;.
\end{equs}
These can be written as special cases of \eqref{e:abstractX} and \eqref{e:abstractY} respectively, with 
\begin{equs}
    F(u) &= P \mathrm{div} \bigl(u \otimes u\bigr)\;,\\
    G(u) &= |\nabla|^{-\alpha} P\mathrm{div} \bigl(u \otimes |\nabla|^{\alpha}u\bigr) -  P\text{div} \bigl(u \otimes u \bigr)\;.
\end{equs} 
For clarity, we make the dependence of $G$ on $\al$ explicit and write $G=G_\al$. In view of Proposition \ref{prop:equiv}, we first show that $G_\al$ is a uniformly smooth map from $\CC^{\al-\kappa} \to \CC^{\al + 1 - 2\g + \kappa}$. 
To do this, we first recall the following formulation of the Coifman--Meyer multiplier theorem (see for example \cite{CK}).
\begin{proposition}\label{prop:CM}
Let $m:\bb{R}^d\times\R^d\to \bb{C}$ be smooth away from the origin such that
\begin{equation*}
\|m\|_{\CM,n}\eqdef \sup_{0 \le n_1,n_2 \le n}\sup_{\xi_1,\xi_2\in\bb{R}^d}(|\xi_1|+|\xi_2|)^{n_1+n_2}|\nabla^{n_1}_{\xi_1}\nabla^{n_2}_{\xi_2} m(\xi_1,\xi_2)|<\infty\;,
\end{equation*}
for every $n \ge 0$.
Consider the bilinear map given by
\begin{equation*}
\mathcal{F}[T_m(f_1,f_2)](n)=\sum_{\substack{n_1,n_2\in\bb{Z}^2 \\ n_1+n_2=n}}m(n_1,n_2)\widehat{f_1}(n_1)\widehat{f_2}(n_2)\;.
\end{equation*}
Then for all $p\in[1,\infty)$ and $p_1,p_2\in(1,\infty]$ with $\frac{1}{p}=\frac{1}{p_1}+\frac{1}{p_2}$
there exists $n > 0$ such that
\begin{equation*}
\|T_m(f_1,f_2)\|_{L^p}\lesssim_{d,p_i}\|m\|_{\CM,n}\|f_1\|_{L^{p_1}}\|f_2\|_{L^{p_2}}\;,
\end{equation*}
 for all $f_1,f_2$.
\end{proposition}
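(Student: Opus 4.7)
The plan is to prove Proposition~\ref{prop:CM} along the classical Coifman--Meyer lines: decompose the bilinear multiplier in frequency space into three pieces according to which of $|\xi_1|$ and $|\xi_2|$ dominates, and handle each piece by reduction to paraproducts plus Littlewood--Paley theory. First I would introduce a smooth homogeneous partition of unity $1 = \phi_{\mathrm{LH}} + \phi_{\mathrm{HL}} + \phi_{\mathrm{HH}}$ on $\R^{2d}\setminus\{0\}$, supported respectively in $\{|\xi_1| \ll |\xi_2|\}$, $\{|\xi_1| \gg |\xi_2|\}$, and $\{|\xi_1| \sim |\xi_2|\}$. Since multiplication by $\phi_\bullet$ preserves the Coifman--Meyer seminorms $\|\cdot\|_{\CM, n}$ up to a constant, it suffices to treat each piece $m\phi_\bullet$ separately, and by symmetry the $\mathrm{LH}$ and $\mathrm{HL}$ cases are interchangeable.

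For the low-high piece, dyadic localization in frequency yields
\begin{equation*}
T_{m\phi_{\mathrm{LH}}}(f_1, f_2) = \sum_{N \text{ dyadic}} \wt P_N\bigl((a_N(D) P_{\leq N/8} f_1) \cdot (b_N(D) P_N f_2)\bigr)\;,
\end{equation*}
where $\wt P_N$ is a fattened Littlewood--Paley projector and the multipliers $a_N(D), b_N(D)$ have symbols which, after rescaling to the unit annulus, admit an absolutely convergent Fourier series expansion in the angular variable with coefficients controlled by $\|m\|_{\CM, n}$ for $n$ sufficiently large. This reduces the estimate to a sum of tensor products of standard Mikhlin-type multipliers, which one closes by a square-function argument combined with H\"older's inequality, producing the target bound in the full range $p_i \in (1,\infty]$ (the $L^\infty$ endpoint being handled pointwise via the low-frequency projection on $f_1$).

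The delicate case is the diagonal piece $T_{m\phi_{\mathrm{HH}}}$, where the output frequency may be much smaller than either input frequency. Here I would verify pointwise kernel bounds of the form $|K(x,y_1,y_2)| \lesssim (|x-y_1| + |x-y_2|)^{-2d}$ together with matching derivative estimates (both obtained by integration by parts from the bounds $\|m\|_{\CM, n}$), and then appeal to the bilinear Calder\'on--Zygmund theory of Grafakos--Torres to obtain the estimate in the claimed range $p \in [1,\infty)$. The main obstacle I expect is precisely this high-high-to-low contribution at the endpoint $p=1$ (and at $p_i=\infty$): naive square-function methods fail there, and one really needs the bilinear Calder\'on--Zygmund (or equivalently a bilinear $T(1)$) machinery to close the proof with the stated exponent range.
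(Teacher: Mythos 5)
The paper does not prove this proposition at all: it is presented as a recalled formulation of the Coifman--Meyer multiplier theorem, with a pointer to the reference \cite{CK}, and is then used as a black box in the proof of Proposition~\ref{prop:GalRegularity}. So your task of reproducing a proof was inherently ``blind'' in a stronger sense: there is no proof in the paper to match against. What you have written is a reasonable outline of the classical argument, and it is worth evaluating on its own terms.

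Your paraproduct decomposition into LH, HL, HH pieces, the angular Fourier-series expansion of the rescaled symbol on the LH/HL pieces reducing to tensor products of Mikhlin multipliers, and the square-function/H\"older argument are all standard and would work with details filled in. Two points deserve attention, though. First, the proposition is stated on the torus (the Fourier transform is a sum over $\Z^2$) with a symbol $m$ defined on $\R^d\times\R^d$, so strictly one needs either to rerun the argument in the periodic setting or to invoke a bilinear transference principle from $\R^d$ to $\T^d$; your sketch implicitly works on $\R^d$ and does not mention this. Second, for the HH piece your plan is to ``verify pointwise kernel bounds $\ldots$ and then appeal to the bilinear Calder\'on--Zygmund theory of Grafakos--Torres,'' but kernel estimates alone do not suffice: the Grafakos--Torres theorem requires, in addition to the standard Calder\'on--Zygmund kernel bounds, either boundedness at a single triple of exponents or a bilinear $T(1)$/weak boundedness condition. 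You acknowledge this at the very end (``one really needs $\ldots$ bilinear $T(1)$ machinery''), but the step of producing that initial boundedness for the HH piece --- usually a square-function estimate giving, say, $L^2\times L^2 \to L^1$ for the diagonal contribution --- should be made explicit rather than left as a passing remark. Neither of these is a fatal flaw, but both are genuine gaps in the argument as written.
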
 

With this, we can show the desired mapping properties of $G_\alpha$.
\begin{proposition} \label{prop:GalRegularity}
    For any $\alpha>0$, $\beta>\alpha/2$, and $\kappa>0$, we have
\begin{equation}
    \|G_\alpha(u)\|_{\CC^{\min\{\beta,2\beta-1\}-\kappa}}\lesssim\|u\|_{\CC^{\beta}}^2.
\end{equation}
for any divergence-free $u$. In particular $G_\alpha$ maps $C^{\min\{\beta,2\beta-1\}-\kappa}$ to $C^\be$, and this is uniformly smooth.
\end{proposition}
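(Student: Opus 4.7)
The key algebraic observation is that $G_\alpha(u)$ conceals a commutator. For divergence-free $u$ one has $P\div(u\otimes v)=P[(u\cdot\nabla)v]$, and since $P$ commutes with all Fourier multipliers (in particular with $|\nabla|^{\pm\alpha}$), one computes
\begin{equ}
G_\alpha(u) = P|\nabla|^{-\alpha}\bigl\{(u\cdot\nabla)|\nabla|^\alpha u - |\nabla|^\alpha[(u\cdot\nabla)u]\bigr\} = -P|\nabla|^{-\alpha}[|\nabla|^\alpha, u\,\cdot](\nabla u)\;,
\end{equ}
where $[|\nabla|^\alpha, f]g := |\nabla|^\alpha(fg)-f|\nabla|^\alpha g$ is the standard commutator (applied componentwise), and the second equality uses $[|\nabla|^\alpha,\nabla]=0$. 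This rewriting exposes the crucial cancellation: the two apparently top-order contributions to $G_\alpha$ balance to leading order, and only a genuine commutator needs to be estimated.

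The plan is then to bound the commutator via a Bony paraproduct decomposition. Writing $u\cdot w = T_uw + T_wu + \Pi(u,w)$ with $w=\nabla u$ and splitting $[|\nabla|^\alpha, u\,\cdot]w$ correspondingly, each piece becomes a bilinear Fourier multiplier of the form $T_m$ with symbol
\begin{equ}
m(k_1,k_2) = \bigl(|k_1+k_2|^\alpha - |k_2|^\alpha\bigr)\chi_\#(k_1,k_2)\;,
\end{equ}
where $\chi_\#$ is a smooth cutoff to the relevant region (low-high $|k_1|\ll|k_2|$, high-low $|k_1|\gg|k_2|$, or resonant $|k_1|\sim|k_2|$). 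A case analysis gives: on the LH region a Taylor expansion yields $|m|\lesssim|k_1||k_2|^{\alpha-1}$, so the commutator effectively gains one full derivative on the low-frequency factor $u$ relative to a naive estimate; on the HL region one has $|m|\lesssim|k_1|^\alpha$, but the operator is approximately $|\nabla|^\alpha T_{\nabla u}u$, whose fractional power cancels with the outer $|\nabla|^{-\alpha}$ leaving a standard paraproduct; on the resonant region the product $\Pi(u,|\nabla|^\alpha u)$ is well defined precisely when $\beta+(\beta-\alpha)>0$, which is the hypothesis $\beta>\alpha/2$. Assembling these three estimates via the Coifman--Meyer theorem (Proposition~\ref{prop:CM}) together with standard Hölder--Besov paraproduct bounds yields
\begin{equ}
\bigl\|[|\nabla|^\alpha, u\,\cdot]\nabla u\bigr\|_{\CC^{\min\{\beta,2\beta-1\}+\alpha-1-\kappa}} \lesssim \|u\|_{\CC^\beta}^2\;,
\end{equ}
and the outer $P|\nabla|^{-\alpha}$ gains back $\alpha$ derivatives to conclude. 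The dichotomy $\min\{\beta,2\beta-1\}$ reflects whether $\nabla u$ lies in $L^\infty$ (which holds when $\beta\ge1$, so the paraproduct preserves the regularity of the high-frequency argument) or only in $\CC^{\beta-1}$ with negative index (when $\beta<1$, where one invokes $\|T_fg\|_{\CC^{r+s}}\lesssim\|f\|_{\CC^r}\|g\|_{\CC^s}$ with $r=\beta-1<0$, shifting the output by exactly $\beta-1$).

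Uniform smoothness of $G_\alpha$ is then automatic: the bilinear estimate extends $G_\alpha$ to a continuous symmetric bilinear map $\CC^\beta\times\CC^\beta\to\CC^{\min\{\beta,2\beta-1\}-\kappa}$, whose Fréchet derivatives of order $\ge3$ vanish identically and whose first and second derivatives are bounded on bounded sets. The main technical obstacle is the careful verification of the Coifman--Meyer symbol estimates on $|k_1+k_2|^\alpha-|k_2|^\alpha$ uniformly across dyadic scales on each frequency region, together with the endpoint bookkeeping in the paraproduct estimates; the loss of $\kappa$ derivatives in the target regularity is precisely what allows one to avoid sharp endpoints both in the Hölder--Besov embeddings and in the resonant product bound.
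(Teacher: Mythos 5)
Your proposal is correct and takes essentially the same route as the paper: the paper's explicit bilinear symbol $|n|^{-\alpha}(|n|^\alpha-|\ell|^\alpha)(\ft f(k)\cdot\ell)$ \emph{is} your commutator $-P|\nabla|^{-\alpha}[|\nabla|^\alpha,u\cdot\nabla]u$ in Fourier variables, and both arguments proceed by a trichotomy of frequency regions with a Taylor expansion of the symbol in the low--high region, the divergence-free relation $\ft u(k)\cdot k=0$ in the resonant region, and Coifman--Meyer plus Bernstein to close each dyadic block. The one spot where you should be more careful is the resonant piece: writing it as $\Pi(u,|\nabla|^\alpha u)$ with well-definedness threshold $2\beta-\alpha>0$ already presupposes that the gradient has been commuted \emph{outside} the resonant product, which is a second, separate use of $\div u=0$ (in the paper: replacing $\ell$ by $n$ in $\ft f(k)\cdot\ell$); if one instead estimates $\Pi\bigl(u,|\nabla|^\alpha\nabla u\bigr)$ naively from the commutator form $[|\nabla|^\alpha,u\cdot](\nabla u)$, the required condition becomes $2\beta>1+\alpha$, which is strictly stronger than $\beta>\alpha/2$.
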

\begin{proof}
    Since $G_\al$ is bilinear, it is enough to show that the operator  
    \begin{equation}
    G_\al(f,g)=\text{div} \bigl(f \otimes g \bigr) - |\nabla|^{-\alpha} \text{div} \bigl(f \otimes |\nabla|^{\alpha}g \bigr)\;,
    \end{equation}
    is bounded from $C^{\min\{\beta,2\beta-1\}-\kappa}$ to $C^\be$.
    For mean-free $f$, we recall that  
    \begin{equation*}
    \|Pf\|_{\CC^s}\lesssim \|f\|_{\CC^s}\;.
    \end{equation*}
    We write $G_\al$ as the bilinear Fourier multiplier given by
    \begin{equation}
        \F[G(f,g)](n)=i\sum_{k+\ell=n}|n|^{-\alpha}(|n|^{\alpha}-|\ell|^{\alpha})(\ft{f}(k)\cdot\ell)\ft{g}(\ell)\;.
    \end{equation}
    We decompose into different frequency scales; fix $(P_N)$ with $N$ dyadic integers a family of Littlewood--Paley projectors, namely $\F[P_Nf](n)=\phi_N(n)\ft{f}(n)$, where $\phi_N(n)=\phi(n/N)$ for $\phi$ a smooth function on $\R^2$ supported on an annulus $\frac12<|x|<2$ with $\sum_{N\text{ dyadic}} \phi_N(x)=1$ for all $|x| \ge 1$. We also define $\phi_{\approx N}=\sum_{N/8\leq M\leq 8N}\phi_M$.
    
    For $N,K,L$ dyadic we shall estimate $P_NG(P_Kf,P_Lg)$, computing
    \begin{equation}\label{dyadicbilinear}
        \F[P_NG(P_Kf,P_Lg)]
        (n)=i\sum_{k+\ell=n}\phi_N(n)|n|^{-\alpha}(|n|^{\alpha}-|\ell|^{\alpha})(\ft{P_K f}(k)\cdot\ell)\ft{P_L g}(\ell).
    \end{equation}
    In all the following we assume $|k|\approx K,|\ell|\approx L,|n|\approx N$ and $k+\ell=n$. We split into cases depending on the relative sizes of $K,L,N$. 
    
    For $K\ll L\approx N$  we may replace the multiplier by a localised version. Letting
    \begin{equation*}
    \psi_{N,\gamma}(x)=N^{-\gamma}|x|^\gamma\phi_{\approx N}(x)\;,
	\end{equation*}    then $\psi_{N,\gamma}(x)=\psi_{\gamma}(x/N)$ for some smooth compactly supported $\psi_{\gamma}$, and we have (for $K\ll L\approx N$)
    \begin{align*}
        \phi_N(n)|n|^{-\alpha}(|n|^{\alpha}-|\ell|^{\alpha})&=\phi_N(k+\ell)\psi_{N,-\alpha}(k+\ell)(\psi_{N,\alpha}(k+\ell)-\psi_{N,\alpha}(\ell)) \\
        &=\phi_N(k+\ell)\int_0^1N^{-1}k\cdot\nabla\psi_\alpha\left(\frac{\ell+\sigma k}{N}\right)d\sigma\;.
    \end{align*}
    So for $j=1,2$, letting
    \begin{equation*}
       m_N^j(k,\ell)= \phi_N(k+\ell)\phi_{\approx N}(\ell)\int_0^1\nabla_j\psi_\alpha\left(\frac{\ell+\sigma k}{N}\right)d\sigma\;,
    \end{equation*}
    then $m_N^j$ is a Coifman--Meyer multiplier with $\|m_N^j\|_{\CM,n}\lesssim 1$ for every $n$ (as $m_N^j$ is only non-zero for $|k|,|\ell|\lesssim N$, and each derivative of $m_N^j$ gains a factor $N^{-1}$). We see
    \begin{equation*}
        \F[P_NG(P_Kf,P_Lg)](n)=N^{-1}\sum_{j=1,2}\sum_{k+\ell=n}m_N^j(k,\ell)(k_j\ft{P_K f}(k)\cdot\ell)\ft{P_L g}(\ell)\;,
    \end{equation*}
    and so by Proposition \ref{prop:CM} and Bernstein's inequalities we see
    \begin{align*}
        \|P_NG(P_Kf,P_Lg)\|_{L^\infty} &\lesssim N^{\varepsilon} \|P_NG(P_Kw,P_Lw)\|_{L^{2/\varepsilon}} \\
        &\lesssim N^{\varepsilon-1}LK\|P_Kf\|_{L^\infty}\|P_Lg\|_{L^\infty} \\
        &\lesssim N^{\varepsilon-\beta}K^{1-\beta}\|f\|_{\CC^{\beta}} \|g\|_{\CC^{\beta}}\;.
    \end{align*}
    Next for $L\ll K\approx N$ we may use Bernstein's inequalities to directly bound \eqref{dyadicbilinear}
    \begin{align*}
        \|P_NG(P_Kf,P_Lg)\|_{L^\infty} &\lesssim L\|P_Kf\|_{L^\infty}\|P_Lg\|_{L^\infty} \\
        &\lesssim N^{-\beta} L^{1-\beta}\|f\|_{\CC^{\beta}} \|g\|_{\CC^{\beta}}\;.
    \end{align*}
    For the resonant case $K\approx L\gtrsim N$, we decompose \eqref{dyadicbilinear} as
    \begin{align}
        \F[P_NG(P_Kf,P_Lg)](n)&=i\sum_{k+\ell=n}\phi_N(n)(\ft{P_K f}(k)\cdot n)\ft{P_L g}(\ell)\label{Gresonantterm1} \\
        &\quad -i\sum_{k+\ell=n}\phi_N(n)|n|^{-\alpha}|\ell|^{\alpha}(\ft{P_K f}(k)\cdot n)\ft{P_L g}(\ell)\;,\label{Gresonantterm2}
    \end{align}
    where we have used that $f$ is divergence-free. From this we conclude that
    \begin{align*}
        \|\eqref{Gresonantterm1}\|_{L^\infty} &\lesssim N\|P_Kf\|_{L^\infty}\|P_Lg\|_{L^\infty} \\
        &\lesssim N L^{-\beta}K^{-\beta}\|f\|_{\CC^{\beta}} \|g\|_{\CC^{\beta}}\;,
    \end{align*}
   	and similarly
   	\begin{align*}
        \|\eqref{Gresonantterm2}\|_{L^\infty} &\lesssim N^{1-\alpha} L^{\alpha}\|P_Kf\|_{L^\infty}\|P_Lg\|_{L^\infty} \\
        &\lesssim N^{1-\alpha} L^{\alpha-\beta}K^{-\beta}\|f\|_{\CC^{\beta}} \|g\|_{\CC^{\beta}}\;.
    \end{align*}
    Putting this all together and writing $\hsum$ for sums restricted to dyadic integers, we have for $\beta>\alpha/2$
    \begin{equs}
        \|&P_NG(f,g)\|_{L^\infty} \\
        &\lesssim \biggl(\hsum_{K\ll L\approx N}+\hsum_{L\ll K\approx N}+\hsum_{K\approx L\gtrsim N}\biggr)\|P_NG(P_Kw,P_Lw)\|_{L^\infty} \\
        &\lesssim\biggl(\hsum_{K\ll L\approx N}N^{\varepsilon-\beta}K^{1-\beta}+\hsum_{L\ll K\approx N}N^{-\beta} L^{1-\beta}+\hsum_{K\approx L\gtrsim N}N^{1-\alpha} L^{\alpha-\beta}K^{-\beta}\biggr) \\
        &\quad\,\, \times \|f\|_{\CC^{\beta}} \|g\|_{\CC^{\beta}} \\
        &\lesssim (N^{\varepsilon-\beta+\max\{0,1-\beta\}}+N^{-\beta+\max\{0,1-\beta\}}+N^{1-2\beta})\|f\|_{\CC^{\beta}} \|g\|_{\CC^{\beta}}\;,
    \end{equs}
    which yields the result.
    \end{proof}
\begin{proof}[Proof of Theorems \ref{theo:main} and \ref{theo:mainGeneral}]
    Let $X^{x_0},Y^{x_0}$ be the solutions of \eqref{e:equvX} and \eqref{e:equvY} respectively, with initial data $x_0 \in \CC^{\al-\kappa}$. By Propositions~\ref{prop:apriori}, \ref{prop:asGWPv}, and~\ref{prop:GalRegularity} with $\beta = \alpha-\kappa$, we see that $X,Y$ satisfy the assumptions of Proposition \ref{prop:equiv}. In particular, for every $t \ge 0$, and every $x_0 \in \CC^{\al-\kappa}$, we have that $\Law(X^{x_0}_t) \sim \Law(Y^{x_0}_t)$. By Proposition~\ref{prop:equivalence}, we also have 
    $$ \rho \sim \Law(X^{x_0}_t) \,$$
    for every $x_0 \in \CC^{\al-\kappa}$ and $t > 0$.
    By averaging over $\mu_\al$, we obtain that for every $t > 0$,
    $$ \rho = \int \rho\, d\mu_\al(x_0) \sim \int \Law(Y^{x_0}_t)  \,d\mu_\al(x_0)\;. $$
    By Proposition \ref{prop:asGWPv}, $\mu_\al$ is an invariant measure for $Y$, so for every $t > 0$,
    $$ \int \Law(Y^{x_0}_t) \, d\mu_\al(x_0) = \mu_\al\;. $$
    In particular, 
    $ \rho \sim \mu_\al$ as claimed.
\end{proof}


\bibliographystyle{Martin}

\bibliography{references_2}
\end{document}